\documentclass{article}
\usepackage{float}
\usepackage{graphicx}
\usepackage{tikz-cd}
\makeatletter
\usepackage{amsmath}
\usepackage{amssymb}
\usepackage{amsthm}
\usepackage{centernot}
\usepackage{mathrsfs}
\usepackage[margin=1.5in]{geometry}
\usepackage{comment}

\usepackage[shortlabels,inline]{enumitem}
\usepackage{hyperref}
\usepackage[capitalise]{cleveref}
\crefformat{equation}{(#2#1#3)}

\newtheorem*{rep@theorem}{\rep@title}
\newcommand{\newreptheorem}[2]{\newenvironment{rep#1}[1]{\def\rep@title{#2 \ref{##1}}\begin{rep@theorem}}{\end{rep@theorem}}}

\newtheorem{thm}{Theorem}[section]
\newtheorem*{thm*}{Theorem}
\newreptheorem{thm}{Theorem}
\newtheorem{lma}[thm]{Lemma}
\newreptheorem{lma}{Lemma}
\newtheorem*{lma*}{Lemma}
\newtheorem*{prp*}{Proposition}
\newtheorem{prp}[thm]{Proposition}
\newreptheorem{prp}{Proposition}
\newtheorem{cor}[thm]{Corollary}
\newreptheorem{cor}{Corollary}
\newtheorem*{cor*}{Corollary}
\newtheorem{conj}[thm]{Conjecture}
\newtheorem*{conj*}{Conjecture}
\newtheorem{clm}{Claim}
\newtheorem*{clm*}{Claim}

\newtheorem*{obs*}{Observation}

\theoremstyle{definition}
\newtheorem{defn}[thm]{Definition}
\newtheorem*{defn*}{Definition}
\newtheorem{qst}[thm]{Question}
\newtheorem*{qst*}{Question}

\newtheorem*{rmk*}{Remark}
\newtheorem*{problem*}{Problem}

\newcommand{\HOD}{\textnormal{HOD}}
\newcommand{\Hom}{\textnormal{Hom}}

\newcommand{\OD}{\textnormal{OD}}

\newcommand{\Ord}{\textnormal{Ord}}

\newcommand{\Reg}{\textnormal{Reg}}
\newcommand{\Ult}{\textnormal{Ult}}
\newcommand{\Col}{\textnormal{Col}}

\newcommand{\ZF}{\textnormal{ZF}}
\newcommand{\ZFC}{\textnormal{ZFC}}
\newcommand{\UA}{\textnormal{UA}}
\newcommand{\AD}{\textnormal{AD}}
\newcommand{\AC}{\textnormal{AC}}
\newcommand{\DC}{\textnormal{DC}}

\newcommand{\cf}{\textnormal{cf}}

\newcommand{\meas}{\textnormal{meas}}

\newcommand{\id}{[\textnormal{id}]}

\newcommand{\RK}{\leq_{\textnormal{RK}}}

\newcommand{\ke}{<_{\Bbbk}}

\newcommand{\mo}{\lhd}

\newcommand{\Pbd}{P_\textnormal{bd}}

\newcommand{\crit}{\textnormal{crit}}
\newcommand{\ot}{\textnormal{ot}}

\newcommand{\ran}{\textnormal{ran}}

\newcommand{\ulim}{\textnormal{-}\lim}

\newcommand{\Hull}{\textnormal{Hull}}

\makeatletter\@addtoreset{case}{thm}\makeatother
\makeatother
\usepackage{xcolor}
\usepackage{mathtools}
\usepackage{adjustbox}
\usepackage{booktabs,array}
\newcolumntype{C}[1]{>{\centering\arraybackslash}m{#1}}
\usepackage{subcaption}
\usepackage[font=small,labelfont=bf,labelsep=colon]{caption}

\crefname{lma}{Lemma}{Lemmas}
\crefname{cor}{Corollary}{Corollaries}
\crefname{defn}{Definition}{Definitions}
\crefname{conj}{Conjecture}{Conjectures}

\begin{document}
\title{Measures in Nairian models, I}
\author{Gabriel Goldberg\thanks{
    Gabriel Goldberg was partially supported by the National Science Foundation under Grant No. 2401789.
}\\ UC Berkeley
\and
Grigor Sargsyan\thanks{
    Grigor Sargsyan was supported by the National Science Centre, Poland, through MAESTRO 15, grant no. UMO-2023/50/A/ST1/00258.
}\\ IMPAN}
\maketitle
\begin{abstract}
    Assuming \(\AD^+ + V = L(P(\mathbb R))\), we 
    show that the Ultrapower Axiom holds in
    \(\HOD_s\) for every \(\omega\)-sequence of ordinals \(s\).
    We use this result to analyze the structure of
    Nairian models \cite{NairianModels,FailureSquare},
    showing that they contain many measurable cardinals. 
    As an application, we establish
    the consistency of the existence of infinitely many consecutive measurable cardinals with ZF, starting with a Woodin limit of Woodin cardinals. This solves a question of
    Henle--Mathias--Woodin \cite{HenleMathiasWoodin}.  
    From the same large cardinal hypothesis, adapting a forcing technique of Adkisson--Ben-Neria \cite{AdkissonBenNeria2025}, we produce a model of ZF in which every uncountable cardinal is singular. Gitik's original construction \cite{Gitik} requires a proper class of strongly compact cardinals.
\end{abstract}

\section{Introduction}
The Axiom of Determinacy (AD) produces an intricate
structure theory for sets of reals and relatively small uncountable cardinals.
Can this theory be generalized in some way to a global structure theory for all sets and all cardinals?
Recent work of Blue--Larson--Sargsyan \cite{NairianModels,FailureSquare} offers a precise way to approach this question:
under strong determinacy hypotheses,
a candidate global theory emerges in
\textit{Nairian models},
a family of inner models of ZF 
obtained, roughly speaking, by 
considering the sets that are definable from  
countable sequences of ordinals.

Blue--Larson--Sargsyan's results on Nairian models rely on
tools from descriptive inner model theory, particularly
the fine-structural analysis of the hereditarily ordinal definable sets (\(\HOD\)) in models of determinacy.
An obstacle to our current understanding of Nairian models is
that no general fine-structural analysis is known 
for \(\HOD_s\), the class of sets hereditarily ordinal definable from a fixed countable sequence of ordinals \(s\).

Building on work of Woodin, we develop an abstract comparison process for \(\HOD_s\),
which shows that this structure satisfies the Ultrapower Axiom (UA). This set-theoretic principle, introduced by the first author \cite{UABook}, captures part
of the combinatorial content of fine-structural comparison 
without reference to fine structure.
\begin{repthm}{thm:ua_hod_s} 
    Assume \(\AD^+ + V = L(P(\mathbb R))\).
    Then for all \(s\in \Ord^\omega\), \(\HOD_s\) satisfies \(\UA\).
\end{repthm}

A central conjecture of inner model theory is
that under \(\AD^+ + V = L(P(\mathbb R))\), \(\HOD\) is a fine-structural inner model.
Woodin confirmed a precise consequence of this conjecture by showing 
that under \(\AD^+ + V = L(P(\mathbb R))\), \(\HOD\) satisfies UA \cite[Theorem 3.1]{GoldbergGeneralizationsUA}.
We reorganize the proof of Woodin's result and extend it to \(\HOD_s\) for every countable sequence of ordinals \(s\).
The theorem hints that \(\HOD_s\) might admit a fine-structural analysis, but this remains unclear.

We use \cref{thm:ua_hod_s} to study measures in Nairian models.
The basic idea is to analyze the restrictions of these measures to \(\HOD_s\).
Instead of using fine structure in \(\HOD_s\), we use the theory of supercompact cardinals under UA \cite{UABook}.
An interesting feature of our approach is that this theory,
which was originally developed to predict the structure of inner models
with supercompact cardinals, finds concrete applications
to models that do not contain an inner model with a supercompact.

We state our theorems in the context of \[\AD^+ + V = L(P(\mathbb R))\]
As usual, \(|X|\leq |Y|\) means there is an injection from \(X\) to \(Y\) and \(|X|\leq^* |Y|\) means there is a surjection from \(Y\) to \(X\).
Let
\[\aleph(X) = \sup\{\alpha\in \Ord : |\alpha| \leq |X|\},\quad \Theta(X) = \sup\{\alpha\in \Ord : |\alpha| \leq^* |X|\}\]
and let \(\Theta = \Theta(\mathbb R)\).

Fix an ordinal \(\xi \leq \Theta\). 
The \textit{extended Nairian model at \(\xi\)} is the class 
\(N_\xi^+\) of all sets hereditarily ordinal
definable from a bounded countable subset of \(\xi\).
Thus,
\[N_\xi^+ = \HOD_{I_\xi}\]
where \(I_\xi =  \textstyle\bigcup_{\beta < \xi}\beta^\omega\). 
When \(\xi\) is fixed, we will denote \(N_\xi^+\) by \(N^+\). 

We are interested in the situation where \(\xi\) is strongly inaccessible in 
\(N^+_\xi\). 
Blue--Larson--Sargsyan show that this is consistent relative to a Woodin limit of Woodin cardinals \cite[Theorems 8.1, 10.6, and 10.8 and Corollary 10.17]{NairianModels}.
In this case, the \textit{Nairian model at \(\xi\)} is given by:
\[N_\xi = N^+_\xi\cap V_\xi\]
This is a model of ZF exhibiting global determinacy phenomena. 
When \(\xi\) is fixed, we denote \(N_\xi\) by \(N\).

\textit{From now on we fix a \(\text{\(\xi\leq \Theta\) that is strongly inaccessible in \(N^+ = N^+_\xi\)}\).}
\begin{repthm}{thm:main}
    Let \(\mathcal H = \HOD\cap V_\xi\). Inside \(N\), suppose \(\eta\) is a regular cardinal satisfying: 
    \begin{enumerate}[(1)]
        \item
        For all \(\alpha < \eta\), \(P(\alpha) \subseteq \mathcal H({\eta^\omega})\). 
        \item  For all \(\alpha < \eta\), \(\Theta(\alpha^\omega)\leq \eta\).
    \end{enumerate}
    Then in \(N\), either \(\eta\) is measurable
    or \(\mathcal H\) correctly computes all finite successors of \(\eta^{++}\).
\end{repthm}
In the known Nairian models,
\(\mathcal H\) does not correctly compute a single successor cardinal below \(\xi\). Letting \(\lambda = (\eta^{+\omega})^{N}\), correct computation of successors in particular implies that some ordinal \(\kappa\leq \eta^{++}\) is \({<}\lambda\)-supercompact in \(\HOD\), whereas Blue--Larson--Sargsyan always assume Steel's \textit{HOD pair capturing} hypothesis (HPC) \cite{Steel2022Comparison},
which prevents \(\HOD\) from containing any nontrivial levels of supercompactness.

For our applications of \cref{thm:main}
we adopt Blue--Larson--Sargsyan's hypotheses:
\begin{equation}\label{eqn:BLS-hypotheses}\AD_\mathbb R + \Theta\text{ is regular} + V = L(P(\mathbb R)) + \textnormal{HPC}\end{equation}
We also assume that \(N\) is a minimal Nairian model
in the sense of \cite[Definition 1.1]{FailureSquare}.
We apply \cref{thm:main} in \(N\) to the sequence of cardinals
\[\langle \Theta(V_\alpha) : \omega \leq \alpha < \xi\rangle\]
These are the choiceless analogs of the Beth numbers. 
These cardinals are closely related to the strong cardinals of \(\HOD\); see \cref{prp:strongs}.
Note that
\(\Theta(V_\omega) =\omega_1\) and \(\Theta(V_{\omega+1}) = \Theta\).
It is a theorem of ZF \cite[Lemma 2.1]{GoldbergCCCP} that if \(\alpha\) is a limit ordinal, then \[\Theta(V_\alpha) = (\sup_{\beta < \alpha} \Theta(V_\beta))^+\] 
and a similar argument shows that under
\(\AD\), \(\Theta(V_{\omega+2}) = \Theta^+\) \cite[Proposition 2.2]{GoldbergCCCP}.

Blue--Larson--Sargsyan show that in \(N\), for \(n < \omega\),
\begin{equation}\Theta(V_{\omega+n+1}) = \Theta^{+n}\label{eqn:bls_eqn}\end{equation}
(See \cite[Theorem 1.3(1)--(3)]{FailureSquare} and
\cref{lma:V_vs_I}.)
More generally, although the value of \(\Theta(V_{\alpha+1})\) for limit ordinals \(\alpha\) of countable cofinality remains open,
for all other infinite \(\alpha\), 
Blue--Larson--Sargsyan show \cite[Theorem 10.23]{NairianModels}
\[\Theta(V_{\alpha+1}) = \Theta(V_{\alpha})^+\]
This can be seen as a version of the generalized
continuum hypothesis: in ZFC, GCH holds if and only if
\(\Theta(V_{\alpha+1}) = \Theta(V_\alpha)^+\) for all
infinite ordinals \(\alpha\).

Our main measurability result for minimal Nairian models is as follows:
\begin{repthm}{thm:theta_meas}
    In \(N\), for all infinite ordinals \(\alpha\),
    \(\Theta(V_\alpha)\) is measurable.
\end{repthm}
Applying Blue--Larson--Sargsyan's calculation \cref{eqn:bls_eqn}, we have our first application:
\begin{repcor}{cor:consistency}
    The theory ``\textnormal{ZF + }there are
    infinitely many consecutive measurable cardinals'' is consistent relative to a Woodin limit of Woodin cardinals.
\end{repcor}

\cref{thm:theta_meas} also allows us to compute \(\aleph(V_{\alpha+1})\).
\begin{repcor}{cor:no-long-rank-sequences}
    In \(N\), for all infinite ordinals \(\alpha\), \(\aleph(V_{\alpha+1}) = \Theta(V_\alpha)\).
\end{repcor}
In other words,
there is no \(\Theta(V_\alpha)\)-sequence of distinct subsets of \(V_\alpha\).
This is further evidence
that the theory of Nairian models yields a 
global generalization of determinacy.
The equation \(\aleph(V_{\omega+1}) = \Theta(V_\omega)\) asserts that there is no \(\omega_1\)-sequence of distinct reals, while \(\aleph(V_{\omega+2}) = \Theta(V_{\omega+1})\) states that there is no \(\Theta\)-sequence of distinct subsets of \(\mathbb R\).
These are well-known consequences of \(\AD\) and \(\AD_\mathbb R\) respectively.

Our second application is a reduction of the
large cardinal hypothesis used in Gitik's construction of
a model of ZF in which every uncountable
cardinal is singular \cite{Gitik}. This uses our results
on the global structure of measures in arbitrary Nairian models.
We return to the setting of \(\AD_\mathbb R + \DC + V = L(P(\mathbb R))\), with \(\xi < \Theta\) inaccessible in \(N^+\).

\cref{thm:unique_complete} states that
in \(N\), 
each regular cardinal \(\delta\) carries a unique weakly 
normal measure concentrating on ordinals of countable cofinality,
which we denote by \(W_\delta\). \cref{thm:monotonicity}
shows that in \(N\), 
the map that sends \(\delta\) to the completeness \(\kappa_\delta\) of \(W_\delta\) is monotone.
\cref{prp:hpc_no_sc} and \cref{prp:cardinal_no_sc}
show that if either HPC holds or \(\xi\) is a cardinal, 
then the cardinals \(\kappa_\delta\) are unbounded in \(N\). 
In \cref{thm:abn}, assuming this unboundedness, we adapt 
a forcing technique of Adkisson--Ben-Neria \cite{AdkissonBenNeria2025}, 
originally developed in the context of choiceless large cardinals,
to work over \(N\).
This produces a model of ZF in which
every uncountable cardinal is singular,
leading to our second application:
\begin{repcor}{cor:gitik}
    The theory ``\textnormal{ZF + }every uncountable cardinal is singular'' 
    is consistent relative to a Woodin limit of Woodin cardinals.\qed
\end{repcor}

Finally, we develop a comparison process 
for Nairian models themselves. We assume
\(\AD_\mathbb R + \DC + V = L(P(\mathbb R))\)
and \(\xi < \Theta\). 
By comparing Nairian models, 
we prove the following results
for \(N\):
\begin{repthm}{thm:unique}
    In \(N\), for every set \(X\), there is a unique normal fine measure
    on \(P_{\omega_1}(X)\).
\end{repthm}

\begin{repthm}{thm:ua_N}
    \textnormal{UA} holds in \(N\).
\end{repthm}
In this choiceless context, UA states that 
any two ultrapowers of \(V\) 
by measures on ordinals can be compared by further 
internal ultrapowers. Note that these ultrapowers will not in general be fully elementary.

As an application of \cref{thm:ua_N}, we prove the following result. We return to the Blue--Larson--Sargsyan hypotheses \cref{eqn:BLS-hypotheses} and assume \(N\) is a minimal Nairian model. 
\begin{repthm}{thm:filter_extension}
    In \(N\), suppose \(\alpha\geq \omega\) is not of the form \(\beta+1\) where \(\beta > \omega\) and \(\cf(\beta) = \omega\).
    Let \(\delta = \Theta(V_\alpha)\). Then every
    \(\delta\)-complete filter on an ordinal extends to a
    \(\delta\)-complete ultrafilter.
\end{repthm}

\paragraph{Historical notes.}
Blue and Sargsyan realized in 2025 that since the Nairian model
contains many runs of consecutive regular cardinals \cite{NairianModels}, 
one could answer the Henle--Mathias--Woodin consecutive measurables question by proving that all regular cardinals of the Nairian model are measurable. Moreover,
they realized that the latter would follow from a directed system analysis of models of the form \(\HOD_s\), for \(s\) a countable sequence of ordinals, analogous to the known analysis of \(\HOD\) in models of determinacy.
Later Trang realized one could settle the consecutive measurables question assuming Sargsyan's quasi-club conjecture; see the
comments following \cref{thm:unique}.
Both the \(\HOD_s\)-analysis and the quasi-club conjecture remain open.  
\section{Definability from a countable sequence of ordinals}\label{sec:hod_of_s}
    The main result of this section is an abstract comparison
    theorem for models of the form \(\HOD_s\), where \(s\) is a countable sequence of ordinals. 
    Assume \(\AD^+ + V = L(P(\mathbb R))\). 
    We will show that for all \(s\in \Ord^\omega\), \(\HOD_s\vDash \UA\) and the \(\HOD_s\)-ultrafilter hypothesis holds:

    \begin{defn}Suppose \(M\) is a transitive model of set theory. The  \textit{\(M\)-ultrafilter hypothesis}
    states that if \(U\in M\) is a measure in \(M\),
    \(U\) extends to a countably complete filter in \(V\).\end{defn} 
    Here and throughout, a \textit{measure} is a countably complete ultrafilter.

    The proof is obtained by relativizing Woodin's theorem that under \(\AD^+ + V = L(P(\mathbb R))\), \(\HOD\vDash \UA\) and the \(\HOD\)-ultrafilter hypothesis holds. While Woodin's original proof easily relativizes to a real, relativization to a countable sequence of ordinals is not immediate, and Woodin explicitly left this question open.
    Our approach uses the following ZF fact, essentially due to Paris \cite{Enayat, Paris}:

    \begin{defn}
    Let \(\mathscr T_X\) denote the \(\Sigma_2\)-theory of \(V\) with parameters in
    \(\Ord\cup \{X\}\). 
    Let \(\mathscr H_X\) denote the structure \((\HOD_X,\mathscr T_X,{\in})\).
    Also let \(\mathscr T = \mathscr T_\emptyset\) and \(\mathscr H = \mathscr H_\emptyset\).
    \end{defn}

    \begin{prp}\label{prp:hod-realization}
    Suppose \(X\) is a set, \(H\) is a countable transitive structure, and \[\rho : H\to \mathscr H_X\] is an elementary embedding. Then there is a countable transitive model \(\bar V\) of \(\ZF\)
    and a set \(\bar X\in \bar V\)
    such that \(H = (\mathscr H_{\bar X})^{\bar V}\).
    Thus \(\rho\) is partial elementary from \((\bar V,\bar X)\) to \((V,X)\).
    \end{prp}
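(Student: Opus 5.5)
The plan is a Paris/Enayat-style term-model argument. We use the predicate \(\mathscr T_X\) to show that \(\ran(\rho)\) is closed under every Skolem function that is \(\Sigma_2\)-definable from ordinals and \(X\). Then we build \(\bar V\) as a union of transitive collapses of hulls of \(\ran(\rho)\) inside rank-initial segments \(V_\theta\) of \(V\).

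First I would record the standard ZF facts. An object is ordinal definable from \(X\) iff it is definable in some \(V_\theta\) from ordinals and \(X\). Hence every \(a\in\HOD_X\) is \(\Sigma_2\)-definable in \(V\) from ordinal parameters and \(X\). Moreover, for any formula \(\varphi\), the relation ``\(V_\theta\vDash\varphi[\vec\alpha,X]\)'' is \(\Delta_2\) in \(\theta,\vec\alpha,X\). Consequently \(\mathscr T_X\) decides, for all ordinals \(\theta,\vec\alpha\), the full first-order theory of \((V_\theta,X)\) with parameters \(\vec\alpha\) (when \(X\in V_\theta\)). In particular it decides which ordinal is \emph{the least \(\gamma<\theta\) such that \(V_\theta\vDash\psi[\gamma,\vec\alpha,X]\)}.

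Since \(\rho\) is elementary for the expanded language, \(H\) carries the pulled-back predicate \(\bar{\mathscr T}=\rho^{-1}[\mathscr T_X]\). The key closure claim is the following. Fix an ordinal \(\bar\theta\in H\), let \(\theta=\rho(\bar\theta)\), and consider any ordinal \(\gamma\) definable in \(V_\theta\) from \(X\) and parameters in \(\ran(\rho)\cap\theta\). Then \(\gamma\in\ran(\rho)\): the statement ``the least witness exists'' lies in \(\mathscr T_X\), so \(H\) has a witness \(\bar\gamma\) with \(\rho(\bar\gamma)=\gamma\).

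Let \(Y_\theta=\Hull^{V_\theta}(\ran(\rho)\cap\theta\cup\{X\})\) for those \(\theta\in\ran(\rho)\) such that, say, \(V_\theta\prec_{\Sigma_1}V\) and \(X\in V_\theta\). This condition is \(\Pi_1\), so the set of such \(\theta\) is itself visible to \(\mathscr T_X\) and is unbounded in \(\ran(\rho)\cap\Ord\). By the closure claim, \(Y_\theta\cap\Ord=\ran(\rho)\cap\theta\). Let \(\pi_\theta:Y_\theta\to\bar V_{\bar\theta}\) be the transitive collapse. For \(\theta<\theta'\) in this set, \(Y_\theta\subseteq Y_{\theta'}\) and \(Y_\theta=Y_{\theta'}\cap V_\theta\), so the collapses cohere. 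Put \(\bar V=\bigcup_{\bar\theta}\bar V_{\bar\theta}\) and \(\bar X=\pi_\theta(X)\). Then \(\bar V\) is countable and transitive with \(\Ord^{\bar V}=\Ord^H\). The map \(\pi_\theta^{-1}\) agrees with \(\rho\) on ordinals and is elementary from \(\bar V_{\bar\theta}\) to \(V_\theta\); this gives the partial elementarity of \(\rho\) from \((\bar V,\bar X)\) to \((V,X)\), once we know \(\rho\) and \(\bigcup_\theta\pi_\theta^{-1}\) agree on \(H\).

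To identify \(H\) with \((\mathscr H_{\bar X})^{\bar V}\), I would argue by induction on rank. Each \(a\in H\) is \(\bar{\mathscr T}\)-coded by a definition \(\varphi(\bar\alpha,X)\) over some \(V_{\bar\theta}\), namely the pullback of a definition of \(\rho(a)\). Evaluating the same definition in \(\bar V_{\bar\theta}\) gives an element of \(\HOD_{\bar X}^{\bar V}\). Conversely, every \(\bar X\)-ordinal-definable set of \(\bar V\) arises this way. Membership between such elements, and membership of \(\Sigma_2\) sentences in \((\mathscr T_{\bar X})^{\bar V}\), is computed by the same \(\mathscr T_X\)-entries that determine \(\in^H\) and \(\bar{\mathscr T}\). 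So the identification is an isomorphism, and hence the identity since both structures are transitive.

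The main obstacle I expect is verifying that \(\bar V\vDash\ZF\), and that \(\Sigma_2\)-truth in \(\bar V\) is what \(\bar{\mathscr T}\) says it is. The union of the \(\Sigma_1\)-correct chain \(\bar V_{\bar\theta}\) is automatically a model of the \(\Pi_2\) consequences that reflect. Full replacement, however, seems to require \(\Sigma_n\)-correct \(\theta\) for every \(n\), and that is not \(\Sigma_2\)-expressible. I would first try to handle one instance of replacement at a time: for a fixed \(n\), the statement ``there are unboundedly many \(\theta\) with \(V_\theta\prec_{\Sigma_n}V\)'' can be witnessed by \(\Sigma_2\) facts about \(V_{\theta'}\) for larger \(\theta'\). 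Then I would argue that the chain can be thinned to such \(\theta\) uniformly. If that fails, the fallback is to replace the single chain by a directed system of hulls indexed by pairs \((\theta,n)\), still using \(\mathscr T_X\) to guarantee coherence and \(Y\cap\Ord=\ran(\rho)\cap\theta\).
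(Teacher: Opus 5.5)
There is a genuine gap at the step where you set \(Y_\theta=\Hull^{V_\theta}(\ran(\rho)\cap\theta\cup\{X\})\) and claim two things: that \(Y_\theta\cap\Ord=\ran(\rho)\cap\theta\), and that the inverse collapse is elementary into \(V_\theta\). The ambient theory is \(\ZF\), and the applications are under \(\AD\), so \(V_\theta\) has no definable Skolem functions. The two claims cannot both be obtained this way.

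If \(Y_\theta\) is the set of elements of \(V_\theta\) definable from \(\ran(\rho)\cap\theta\cup\{X\}\), the ordinal computation is fine. But every element of \(Y_\theta\) is then \(\OD_X\), so the Tarski--Vaught criterion fails. For instance, under \(\AD\) with \(X\in\Ord^\omega\), for all large \(\theta\) the structure \(V_\theta\) satisfies ``there is a real that is not \(\OD_X\)'', and no such real lies in \(Y_\theta\). In fact \((Y_\theta,\in)\) need not even be extensional: the set of non-\(\OD_X\) reals and \(\emptyset\) both belong to \(Y_\theta\) and have the same members there. So there is no transitive collapse, let alone an elementary one.

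If instead you close under chosen witnesses, you need some choice to do so. More seriously, your closure claim then no longer controls the ordinals. It only shows that ordinals definable from \emph{ordinals} of \(\ran(\rho)\) together with \(X\) land in \(\ran(\rho)\). Ordinals definable from the non-\(\OD_X\) witnesses you added have no reason to. Note that the proposition only asserts that \(\rho\) is partial elementary for formulas with ordinal parameters and \(\bar X\). It does not provide an elementary embedding of \(\bar V\) into \(V\), and realizing \(\bar V\) as the collapse of a substructure of \(V\) asks for more than your argument delivers.

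The intended argument is different. The paper credits Paris and later says \(\bar V\) is ``constructed by type omission''; it builds \(\bar V\) as a model of a theory rather than as a hull.

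First, the full first-order theory of \((V,X)\) with ordinal parameters reduces uniformly to \(\mathscr H_X\). By induction on \(n\), ``\(V_\theta\prec_{\Sigma_n}V\)'' is a \(\Pi_n\) statement about \(\theta\). A \(\Sigma_{n+1}\) statement with ordinal parameters holds in \(V\) iff it holds in some \(V_\theta\prec_{\Sigma_n}V\) containing the parameters. Since that quantifier ranges over ordinals and satisfaction in \(V_\theta\) is \(\Delta_2\), this is first-order over \(\mathscr H_X\). Hence the complete theory \(T\) of \((V,\in,X,\rho(\bar\alpha))_{\bar\alpha\in\Ord^H}\), in the countable language with constants \(c_X\) and \(c_{\bar\alpha}\), is read off from \(H\).

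The type \(\{x\in\Ord\}\cup\{x\neq c_{\bar\alpha}:\bar\alpha\in\Ord^H\}\) is non-isolated by exactly your closure claim, now applied to arbitrary formulas over \(V\). If \(T\) proves that some ordinal satisfies \(\psi(x)\), the least such ordinal in \(V\) is definable from \(X\) and ordinals in \(\ran(\rho)\). It is therefore \(\rho(\bar\beta)\) for some \(\bar\beta\), and \(\psi(c_{\bar\beta})\in T\).

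The omitting types theorem then gives a countable model of \(T\) whose ordinals are exactly the \(c_{\bar\alpha}\), ordered like \(\Ord^H\). That model is well-founded, and its transitive collapse is \(\bar V\), with \(\bar X\) the interpretation of \(c_X\). Since \(\ZF\subseteq T\), the replacement problem you anticipated never arises. Your final paragraph, identifying \(H\) with \((\mathscr H_{\bar X})^{\bar V}\) by transferring definitions, then goes through.
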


We begin by providing a new proof of Woodin's original theorem. Our proof reorganizes, simplifies, and generalizes a proof that Woodin gave in the case \(\AD_\mathbb R + \Theta\) is regular; see \cite{GoldbergGeneralizationsUA}. We use one of the central \(\AD^+\) theorems on derived models:
\begin{thm}[{Woodin, \cite[Theorem 2.33]{SteelOrdinalDefinability}}]\label{thm:derived}
    Assume \(\AD^+ + V = L(P(\mathbb R))\). 
    There is a definable inner model \(M\subseteq \HOD\) with a limit of Woodin cardinals \(\eta\)
    and a generic elementary embedding \[j : V\to D\]
    where \(D\) is a derived model of \(M\) at \(\eta\).
\end{thm}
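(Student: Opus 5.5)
The proof I would give goes through Woodin's analysis of \(\HOD\) below \(\Theta\) together with an \(\omega\)-fold iterated ultrapower of \(V\). The aim is to find a model that has a limit of Woodin cardinals, sits inside \(\HOD\), and whose derived model at that limit is an elementary image of \(V\).

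The first input is the theorem that, under \(\AD^+ + V = L(P(\mathbb R))\), \(\Theta\) is Woodin in \(\HOD\). More precisely, \(\Theta\) is Woodin in \(L[Z]\), where \(Z\subseteq\Theta\) is an ordinal definable set coding \(\HOD|\Theta\) together with the Vop\v{e}nka algebra for adding a real. For every set of reals \(A\), there is an \(\OD\) predicate witnessing that \(A\) is captured by this Vop\v{e}nka algebra in the sense of generic absorption.

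Next I would iterate \(V\). Using a countably complete measure available under \(\AD\) (for instance a normal fine measure on \(P_{\omega_1}(\mathbb R)\), or a suitable measure concentrating on a stationary set), I would form the internal iteration \(j_{0,n}: V\to M_n\) for \(n<\omega\) and the direct limit \(j_{0,\omega}: V\to M_\omega\). Each \(M_n\) satisfies \(\AD^+ + V=L(P(\mathbb R))\), and each of \(j_{0,n}(\Theta)\) is Woodin in \(\HOD^{M_n}\). By the coherence of the iteration, for \(m<n\) the ordinal \(j_{0,m}(\Theta)\) remains Woodin in \(\HOD^{M_n}\), because \(\HOD^{M_n}\) agrees with \(\HOD^{M_m}\) up to \(j_{0,m}(\Theta)\) (that ordinal is below the critical point of the next step). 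So \(\eta=\sup_n j_{0,n}(\Theta)\) is a limit of Woodin cardinals in the model \(M\) obtained as the union or direct limit of these initial segments. Since \(j_{0,\omega}\) is definable from an \(\OD\) measure, \(M\) is a definable inner model of \(\HOD\).

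Finally, I would take \(g\subseteq\Col(\omega,{<}\eta)\) generic over \(V\) and let \(D\) be the derived model of \(M\) at \(\eta\) computed from \(g\). The claim is that \(L(P(\mathbb R)^{M_\omega})\) can be realized as \(D\), so that \(j = j_{0,\omega}\) composed with this identification gives the required generic embedding. One inclusion comes from the fact that every real of \(M_\omega\) is absorbed by the collapse, via Vop\v{e}nka genericity over \(\HOD^{M_n}\) at \(j_{0,n}(\Theta)\). The other comes from the fact that every set of reals in \(M_\omega\) is the \(\mathbb R^*\)-trace of a \({<}\eta\)-universally Baire set of \(M\), by the \(\OD\) capturing predicates.

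I expect this last step to be the main obstacle. It requires showing that the sets of reals of \(M_\omega\) are exactly the \(\mathrm{Hom}^*\) sets of the symmetric extension. That in turn needs the \(\AD^+\) structure theory, namely Suslin-co-Suslin representations and \(\Sigma^2_1\)-reflection, to match the two \(P(\mathbb R)\)'s in both directions. I would attack it first by proving the inclusion of \(P(\mathbb R)^{M_\omega}\) into the derived model using the capturing predicates. For the converse, I would use a \(\Theta\)-comparison argument, showing that the derived model cannot have a larger Wadge hierarchy than \(M_\omega\).
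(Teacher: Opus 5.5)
The paper does not prove this theorem. It imports it from Woodin via Steel's Theorem 2.33 and uses it as a black box, so your argument has to stand on its own, and it does not: it breaks at the step meant to produce the limit of Woodin cardinals.

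\textbf{The iteration fixes \(\Theta\).} Your iteration is internal, so each \(M_n\) is a class of \(V\). Since \(j_{0,n}(x)=x\) for every real \(x\), we get \(\mathbb R^{M_n}=\mathbb R\). Any surjection of \(\mathbb R\) onto an ordinal that lies in \(M_n\) is also one in \(V\), so \(\Theta^{M_n}\leq\Theta\). The elementarity you assume then gives \(\Theta\leq j_{0,n}(\Theta)=\Theta^{M_n}\leq\Theta\). Thus \(j_{0,n}(\Theta)=\Theta\) for all \(n\), your sequence of Woodin cardinals is constant, and \(\eta=\sup_n j_{0,n}(\Theta)=\Theta\). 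You get one Woodin cardinal, not a limit of them.

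\textbf{The coherence claim is backwards.} The measures you propose have completeness below \(\Theta\) (e.g.\ \(\omega_1\) for a measure on \(P_{\omega_1}(\mathbb R)\)). So each later step moves \(\HOD^{M_m}\) below \(j_{0,m}(\Theta)\) rather than fixing it.

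\textbf{Further problems at this stage.} Without choice, \L o\'s's theorem fails for ordinary ultrapowers of an \(\AD\) model. Already for the club measure on \(\omega_1\), one has \(\id=\omega_1<j(\omega_1)\), yet \(\omega_1\) is not countable in the transitive ultrapower. This is why the paper resorts to Spector's generic ultrapowers, and it leaves ``each \(M_n\) satisfies \(\AD^+ + V=L(P(\mathbb R))\)'' unjustified. Also, a normal fine measure on \(P_{\omega_1}(\mathbb R)\) is not available under \(\AD^+ + V=L(P(\mathbb R))\) in general.

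\textbf{The final step is false as set up.} Suppose \(g\subseteq\Col(\omega,{<}\eta)\) is generic over \(V\) and \(M\subseteq V\). Then any real of \(V\) lying in \(M[g\restriction\alpha]\) is already in \(M\). Indeed, some condition in \(g\) forces its name to equal that real, and the atomic forcing relation is absolute between \(M\) and \(V\), so \(M\) can read the real off. Hence the symmetric reals of your \(D\) meet \(\mathbb R^V\) only in \(\mathbb R\cap M\). That set is countable, because \(M\subseteq\HOD\) and \(\AD\) holds. Since every elementary \(j:V\to D\) fixes every real, no such \(j\) exists for this \(D\), and in particular \(D\neq L(P(\mathbb R)^{M_\omega})\). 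Vop\v{e}nka genericity makes each real generic over \(\HOD\), but that does not place it in \(M[g\restriction\alpha]\) for your \(g\).

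\textbf{What is actually needed.} You need an \(M\)-generic \(g\), built in an extension where \(\mathbb R\) is countable, whose symmetric reals contain \(\mathbb R^V\); this is where genericity iterations or extender-algebra absorption over \(M\) must enter. You then need an elementary embedding of \(V\) into \(L(\mathbb R^*,\Hom^*)\) for that collapse, which requires matching \(P(\mathbb R)\) with the \(\Hom^*\) sets. That matching is the substance of Woodin's theorem. Your proposal names it as the main obstacle and offers only a plan (``a \(\Theta\)-comparison argument''), not an argument.

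So neither the model with a limit of Woodin cardinals nor the embedding into its derived model is actually produced.
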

Here we mean that \(M\) is definable without parameters over \(V\),
or equivalently over \(\mathscr H\). 

We can make a number of further conclusions abstractly from the statement of \cref{thm:derived}, which are also immediate consequences of Woodin's unpublished construction. By homogeneity, \(\mathscr H^D\) is definable over \(M\). Since \(\HOD = L[S]\) for a set of ordinals \(S\), \(j\restriction \mathscr H\) is an extender embedding.
By the uniqueness of generic extender embeddings,
\(j\restriction \mathscr H\) is definable without parameters over \(V\), or equivalently over \(\mathscr H\).
A basic fact about derived models is that \(\Theta^D \leq (\eta^+)^M\).
It follows easily that \(\eta \geq \Theta\) and that \(\eta\) is the supremum of the measurable cardinals of \(M\);
it follows that \(\eta\) is the supremum of the Woodin cardinals of \(M\).
\begin{thm}[Woodin]\label{thm:ua}
    Assume \(\AD^+ + V = L(\mathbb R,\Hom)\). Then
    \(\mathscr H\) satisfies weak comparison:
    if \(H_0\) and \(H_1\) are finitely generated
    structures 
    elementarily embeddable into \(\mathscr H\),
    then either \(H_0\in H_1\), \(H_1\in H_0\),
    or there are close embeddings
    \(k_0 : H_0 \to H^*\) and \(k_1 : H_1\to H^*\).
\end{thm}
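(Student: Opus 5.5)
Given finitely generated $H_0,H_1$ with elementary embeddings $\rho_i : H_i\to\mathscr H$, my plan is to compare them by iterating both into a common ordinal-definable target supplied by \cref{thm:derived}. First I would use \cref{prp:hod-realization} with $X=\emptyset$. It realizes each $H_i$ as $(\mathscr H)^{\bar V_i}$ for a countable transitive $\bar V_i\vDash\ZF$, with $\rho_i$ partial elementary from $\bar V_i$ to $V$. Since $H_i$ is finitely generated and $\rho_i$ is elementary, $H_i$ is the hull in $\mathscr H$ of finitely many ordinals. It therefore satisfies the $\Sigma_2$-reflection of the ambient theory, in particular $\AD^+ + V = L(\mathbb R,\Hom)$ in $\bar V_i$. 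So \cref{thm:derived} applies inside each $\bar V_i$. The result is a definable $M_i\subseteq \HOD^{\bar V_i}$ with a limit of Woodin cardinals $\eta_i$, and $\mathscr H^{\bar V_i}=H_i$ is embedded by the definable extender embedding $j_i\restriction H_i$ into $\mathscr H^{D_i}$, which is in turn definable over $M_i$. Because the construction is parameter-free, $\rho_i$ carries $M_i$, $\eta_i$, and $j_i$ to the corresponding objects $M,\eta,j$ of $V$.

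Next I would compare the $M_i$. The key step is to run a coiteration of $M_0$ and $M_1$, or of countable hulls of the relevant initial segments, whose iteration strategies are guided by $\rho_i$: a branch is chosen by requiring that the copy map into $M$ (via $\rho_i$ composed with the realization) exist. This is the standard realization-strategy argument, and its justification is that $\mathscr T$ codes enough $\Sigma_2$ truth to pin down these branches. Iterating $M_0$ and $M_1$ below their Woodin limits so as to absorb a common generic for $\Col(\omega,{<}\eta^*)$ (genericity iterations) yields iterates $M_0^*$ and $M_1^*$ with a common derived model $D^*$. I expect the comparison to terminate with $M_0^*$ and $M_1^*$ agreeing up to $\eta^*$. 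Since $\mathscr H^{D}$ depends only on $M$ restricted to $\eta$ together with the derived model, this gives $\mathscr H^{D_0^*}=\mathscr H^{D_1^*}=:H^*$.

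Finally I would define $k_i$ as the composition of $j_i\restriction H_i$ with the image of $\mathscr H^{D_i}$ under the iteration map into $H^*$. Each $k_i$ is a composition of extender embeddings and iteration maps, so it should be close, since every measure it derives is in $H_i$ by the definability of $j_i\restriction H_i$ over $H_i$. The trichotomy arises when the coiteration stalls: if one side is absorbed as a proper initial stage of the other, say $\eta_0^*<\eta_1^*$, then $H_0$ is countable and definable in $\bar V_1$, giving $H_0\in H_1$. The main obstacle is the comparison step: producing iteration strategies for the $M_i$ inside $V$ from only the finitely generated embeddings $\rho_i$, and proving that the resulting maps $k_i$ are genuinely close. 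I would first try to get the closeness of $k_i$ from the fact that the $j_i\restriction H_i$ are definable extender embeddings, so that every derived ultrafilter is ordinal-definable in $\bar V_i$ and hence lies in $H_i$.
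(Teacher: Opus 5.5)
Your first stage matches the paper: you realize $H_\ell$ as $\mathscr H^{\bar V_\ell}$ via \cref{prp:hod-realization}, apply \cref{thm:derived} inside $\bar V_\ell$, and use that $j_\ell\restriction H_\ell$ is definable over $H_\ell$. The plan breaks, however, at the step you flag as the main obstacle. There is no coiteration of $M_0$ and $M_1$ to run. \cref{thm:derived} gives these models only as definable inner models of $\HOD$ with a limit of Woodin cardinals. Nothing supplies a fine structure or extender sequence on which to run a comparison, and realizability only gives branches for iterations of each model separately. Avoiding a fine-structural analysis of $\HOD$ is the point of this argument.

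Even granting some comparison, making a common set of reals generic does not yield a common derived model, because the two derived models can have different $\Hom$-pointclasses. Your treatment of the case $H_0\in H_1$ (``$\eta_0^*<\eta_1^*$, so $H_0$ is countable and definable in $\bar V_1$'') is also not an argument. A telling symptom is that $V=L(\mathbb R,\Hom)$ is never used, since \cref{thm:derived} needs only $V=L(P(\mathbb R))$. If your plan worked, it would prove weak comparison under $\AD^+ + V=L(P(\mathbb R))$, which the paper records as open.

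The missing idea is that the $M_\ell$ are never compared with each other. Instead the derived models are compared inside a fixed ambient pointclass. Since $\bar M_\ell\in M$ and, by absoluteness, $M$ sees that $\bar M_\ell$ embeds into $M$, each $\bar M_\ell$ is iterated \emph{separately} by a Neeman-style genericity iteration; this is where your realization idea belongs. The iterate $M^*_\ell$ is still embeddable into $M$, the fixed set $\mathbb R^*=\mathbb R\cap M$ is the set of reals of a symmetric collapse of $M^*_\ell$, and $\Gamma_\ell=\Hom^{D^*_\ell}$ is an initial segment of $\Hom^M_{<\eta}$.

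The comparison then runs as follows.
\begin{enumerate}[(1)]
\item Wadge determinacy in $\Hom^M_{<\eta}$ makes $\Gamma_0$ and $\Gamma_1$ comparable.
\item The hypothesis $V=L(\mathbb R,\Hom)$ upgrades this to comparability of $\Lambda_\ell=P(\mathbb R^*)\cap D^*_\ell$.
\item To get from $\Lambda_0\lhd\Lambda_1$ to $D^*_0\in D^*_1$, use $D^*_0=\Hull^{D^*_0}(k_0[H_0]\cup\Lambda_0)$, which comes from closeness of $k_0$, together with finite generation. Then split into cases.
\begin{enumerate}[(a)]
\item If every set of reals has a sharp, then $\Lambda_0^\#\in D^*_1$ and $D^*_1$ is correct about sharps.
\item Otherwise $\Theta^{H_0}$ is regular in $H_0$, and closeness gives $k_0(\Theta^{H_0})=\sup k_0[\Theta^{H_0}]$, of countable cofinality in $M$. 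So $D^*_0$ is recovered from a countable sequence of its theories that lies in $M$, and hence in $D^*_1$.
\end{enumerate}
\item If the derived models are equal, this gives the common target $H^*$.
\item If $D^*_0\in D^*_1$, then $H_0^*\in H_1^*$, and hence $H_0\in H_1$: a real in $H_1^*$ coding $H_0$ pulls back to itself under the close embedding $k_1$.
\end{enumerate}

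Your closeness argument also needs more than the definability of $j_\ell\restriction H_\ell$. You need that the iteration map $\pi_\ell$ is close to $\bar M_\ell$ and that $\bar M_\ell\subseteq H_\ell$, so that $k_\ell^{-1}[A]=i_\ell^{-1}[\pi_\ell^{-1}[A]]\in H_\ell$.
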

\begin{proof}
    Let \(M\) and \(\eta\)
    be as in \cref{thm:derived}, and let \(\mathbb R^* = \mathbb R \cap M\).

    Suppose that \(H\in \mathscr H\) is a countable transitive structure that
    is elementarily embeddable into \(\mathscr H\).\footnote{
        Throughout this section, we work in choiceless Morse--Kelley set theory, 
        which allows us to refer to the first-order satisfaction predicate of \(V\).
        With a bit more work, the proofs can be carried out in \(\ZF\).
    } 
    We begin by constructing a 
    close elementary embedding
    \(k : H \to H^*\) where \(H^*\) is \(\mathscr H^{D^*}\) 
    for a model \(D^*\) of the form \(L_\alpha(\mathbb R^*,\Gamma)\)
    with \(\Gamma\) an initial segment of \(\Hom_{<\eta}^{M}\).

    By \cref{prp:hod-realization}, choose a countable transitive model
    \(\bar V\equiv V\) such that \(\mathscr H^{\bar V}=H\).
    Applying \cref{thm:derived} inside \(\bar V\), let
    \(\bar M\subseteq H\), \(\bar\eta\), and the generic elementary embedding
    \(j : \bar V\to D\) be as in that theorem, where
    \(D\) is a derived model of \(\bar M\) at \(\bar\eta\).
    Define \[i : H\to\mathscr H^D\]
    by \(i = j\restriction H\).
    By the observations following \cref{thm:derived}, \(i\) is definable over \(H\).

    Note that \(\bar M\in M\) and by absoluteness,
    \(M\) sees that \(\bar M\) is elementarily embeddable into \(M\).
    Therefore using Neeman-style \(\mathbb R\)-genericity iterations,\footnote{
        Woodin's original construction used Woodin-style genericity iterations,
        which requires establishing certain instances of the unique branch hypothesis.
        Steel suggested replacing them with Neeman-style iterations using \cite{Steel}.
    } we can iterate \((\bar M,\bar\eta)\) 
    to a model \((M^*,\eta^*)\), again elementarily embeddable into \(M\),
    such that \(\mathbb R^* = \mathbb R\cap M\) are the reals of a symmetric collapse of \(M^*\) at
    \(\eta^*\), and if \(D^*\) is the derived model of \(M^*\) at \(\eta^*\) in \({M}^*(\mathbb R^*)\), then
    \(\Gamma = \Hom^{D^*}\) is an initial segment of 
    \(\Hom_{<\eta}^{M}\).

    Let \[\pi : \bar M \to {M}^*\] be the iteration map.
    Then \(\pi\restriction {\mathscr{H}}^D\) is elementary from
    \({\mathscr{H}}^D\) to \({\mathscr{H}}^{D^*}\).
    Set \(H^* = {\mathscr{H}}^{D^*}\). Define an elementary embedding \(k : H \to H^*\)
    by \[k = \pi \circ i\] 

    We claim that \(k\) is close to \(H\). 
    To prove this, fix \(A\in H^*\), and let us show that
    \(k^{-1}[A] \in H\). We have 
    \(\pi^{-1}[A]\in \bar M\) since \(\pi\) is close to \(\bar M\) and \(A\in M^*\).
    Since \(\bar M\subseteq H\), it follows that \(\pi^{-1}[A]\in H\) and hence \(i^{-1}[\pi^{-1}[A]]\in H\) since \(i\) is definable over \(H\). But \(k^{-1}[A] = i^{-1}[\pi^{-1}[A]]\).
    
    We will use that \[D^* = \Hull^{D^*}(k[H]\cup\Lambda)\]
    where \(\Lambda = P(\mathbb R)\cap D^*\).
    This follows from the fact that \(H^* = \Hull^{H^*}(k[H]\cup k(\Theta^H))\),
    which holds because \(k\) is close to \(H\)
    and every uniform measure of \(H\) lies on a cardinal less than
    or equal to \(\Theta^H\).

    We now show that Weak Comparison holds for \({\mathscr{H}}\).
    Fix any two
    finitely generated models \(H_0\) and \(H_1\) that are elementarily embeddable into \({\mathscr{H}}\). We will construct close embeddings \(k_0 : H_0 \to H^*_0\) and \(k_1 : H_1\to H^*_1\)
    such that \(H^*_0\) and \(H_1^*\) are \textit{comparable}: either they are equal or one belongs to the other.

    For \(\ell = 0,1\), the construction above yields
    a close embedding \(k_\ell : H_\ell\to H_\ell^*\) where \(H_\ell^* =  {\mathscr{H}}^{D_\ell^*}\) for
    a model \(D_\ell^*\) satisfying 
    \(\mathbb R\cap D_\ell^* = \mathbb R\cap M\), \(\Gamma_\ell = \Hom^{D_\ell^*}\mo \Hom_{<\eta}^{M}\),
    and \(D_\ell^* = \Hull^{D_\ell^*}(k_\ell[H_\ell]\cup \Lambda_\ell)\)
    where \(\Lambda_\ell = P(\mathbb R^*)\cap D_\ell^*\).

    We claim that \(D_0^*\) and \(D_1^*\) are comparable. 
    By Wadge determinacy in \(\Hom_{<\eta}^{M}\), \(\Gamma_0\) and \(\Gamma_1\) are comparable pointclasses. It follows from our assumption that \(V = L(\mathbb R,\Hom)\)
    that \(\Lambda_0\) and \(\Lambda_1\) are comparable as well.
    (If \(\Gamma_0 = \Gamma_1\), this is trivial, and if \(\Gamma_0 \mo \Gamma_1\),
    then \(\Gamma_0^\#\in D_1^*\), so \(\Lambda_0\mo \Lambda_1\).)
    By symmetry, it suffices to show that if \(\Lambda_0 \mo \Lambda_1\), then \(D_0^*\in D_1^*\). 

    Suppose first that every set of reals has a sharp. 
    We will use that \(D_1^*\) is correct about
    sharps, which means that if \(A\in D_1^*\) and \(D_1^*\vDash B = A^\#\), then 
    in fact, \(B = A^\#\). 
    To see this, note that \(M_1^*\) is correct about sharps
    because \(M_1^*\) is elementarily embeddable into \(M\).
    It follows that \(D_1^*\) is correct about
    sharps, since  \(D_1^*\) is an inner model of a forcing extension of \(M_1^*\).

    Since \(D_{0}^* = \Hull^{D_0^*}(k_0[H_0]\cup\Lambda_0)\) and \(H_0\) is finitely generated, \(D_0^*\) is coded by a subset of \(\Lambda_0\) in \(L(\mathbb R^*,\Lambda_0)\). 
    Since \(\Lambda_0^\#\in D_1^*\), \(D_0^*\in D_1^*\). 

    Suppose instead that some set of reals has no sharp. 
    We then have that \(\theta = \Theta^{H_0}\) is regular in \(H_0\). It follows that
    \(k_0(\theta) = \sup k_0[\theta]\) since \(\theta\) carries no uniform measures in \(H_0\) and \(k_0\) is close to \(H_0\). 
    In particular, \(\cf^{M}(k_0(\theta)) = \omega\).
    (We use that \(k_0\in M\), which follows from the fact that \(H_0^*\in M\) 
    and \(H_0\) is finitely
    generated.) Therefore \(D_{0}^* = \Hull^{D_0^*}(\mathbb R^*\cup \mathcal A)\) where 
    \(\mathcal A = \{a_n : n < \omega\}\) 
    is a countable set in \(M\). Letting \(T_n = \text{Th}^{D_0^*}_{\Sigma_n}(\mathbb R^*\cup \{a_0,\dots,a_{n-1}\})\), we have \(\{T_n\}_{n < \omega}\subseteq D_0^*\) and \(\langle T_n\rangle_{n < \omega}\in M\), and hence \(\langle T_n\rangle_{n < \omega}\in D_1^*\).
    Since \(D_0^*\) can be recovered from  \(\langle T_n\rangle_{n < \omega}\), it follows that
    \(D_0^* \in D_1^*\).

    Thus we have shown that \(D_0^*\) and \(D_1^*\) are comparable.
    If \(D_0^*\in D_1^*\), for example, then \(D_0^*\) is \(\OD\) in \(D_1^*\) and hence
    \(H_0^*\in H_1^*\). It follows that \(H_0^*\) and \(H_1^*\) are comparable, and
    this establishes weak comparison. (Note here that if \(H_0^*\in H_1^*\), then \(H_0\in H_1\).)
\end{proof}
The conclusion of \cref{thm:ua} 
implies \(\HOD\) satisfies \(\UA\) by \cite[Theorem 2.3.10]{UABook}, and a further reflection argument
allows us to make this conclusion
assuming \(V = L(P(\mathbb R))\) rather than
\(V = L(\mathbb R,\Hom)\).
We omit this reflection argument, which is similar to
the one in \cref{thm:ua_hod_s} below.
It is open whether \(\mathscr H\) satisfies weak comparison assuming only \(\AD^+ + V = L(P(\mathbb R))\).

We now prove our main relativizations of Woodin's result. The proof will again require a reduction to the case that \(V  = L(\mathbb R,\Hom)\).
This uses Woodin's analysis of the largest Suslin cardinal:
\begin{thm}[Woodin]\label{thm:woodin_strong}
    Assume \(\AD^+ + V = L(P(\mathbb R))\) and \(\delta\) is the largest Suslin. If
    \(\lambda < \Theta\),
    there is a normal measure \(U\) on \(\delta\)
    with \(j_U(\delta) > \lambda\) and
    \(P(\lambda)\cap j_U(\HOD) = P(\lambda)\cap \HOD\).\qed
\end{thm}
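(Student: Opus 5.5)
The plan is to choose \(U\) ordinal definable. This makes the inclusion \(P(\lambda)\cap j_U(\HOD)\subseteq \HOD\) automatic. The real work is the reverse inclusion, which I would get from the directed system analysis of \(\HOD\) below \(\Theta\).

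First I would fix the setting. Since there is a largest Suslin cardinal \(\delta\), the Solovay sequence has successor length, so \(\Theta = \theta_{\alpha+1}\) and \(\delta = \theta_\alpha\). Moreover \(P(\mathbb R)\) is generated over \(\mathbb R\) by the Suslin co-Suslin sets together with one further set \(A\) of Wadge rank just above them. I would take from the \(\AD^+\) structure theory that \(\delta\) carries normal measures. I would also take that the normal measures on \(\delta\) used below are ordinal definable: each is the image of a measure of the \(\HOD\)-analysis, or is obtained from the \(\omega\)-club filter restricted to a set coded by an OD set of reals of Wadge rank below \(\Theta\).

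Next I would prove the easy inclusion. If \(U\) is \(\OD\), then \(j_U\) and hence \(j_U(\HOD)\) are definable from ordinals. So every \(x\in j_U(\HOD)\) is \(\OD\) in \(V\), and since \(j_U(\HOD)\) is transitive, every element of \(x\) is \(\OD\) as well. This gives \(j_U(\HOD)\subseteq\HOD\), and in particular \(P(\lambda)\cap j_U(\HOD)\subseteq P(\lambda)\cap\HOD\).

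For the reverse inclusion and the requirement \(j_U(\delta) > \lambda\), I would use the analysis of \(\HOD\restriction\Theta\) as the direct limit \(\mathcal M_\infty\) of the hod mice (or, abstractly, of countable models elementarily embeddable into \(\mathscr H\), as in \cref{prp:hod-realization}). In \(\mathcal M_\infty\), the ordinal \(\delta\) is a cutpoint that is \(\lambda\)-strong for every \(\lambda<\Theta\), because the part of the directed system above \(\delta\) comes from the single extra set \(A\). So for a given \(\lambda<\Theta\) I would pick an extender \(E\in\HOD\) with critical point \(\delta\) that witnesses \(\lambda\)-strength. The aim is to realize its normal measure component as \(U\cap\HOD\) for some \(\OD\) normal measure \(U\) on \(\delta\) in \(V\), using the \(\HOD\)-ultrafilter hypothesis to lift a measure of \(\HOD\). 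Then \(j_U\restriction\HOD\) factors through \(j_E^{\HOD}\) followed by an embedding with critical point above \(\lambda\). This gives \(j_U(\delta)>\lambda\), and it also gives \(j_U(\HOD)\cap V_{\lambda+1}\supseteq \Ult(\HOD,E)\cap V_{\lambda+1} = \HOD\cap V_{\lambda+1}\).

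The main obstacle is this last step: matching a \(V\)-measure \(U\) to a strength-witnessing extender of \(\HOD\). The measure \(U\) has only one generator, so \(j_U\restriction\HOD\) is an internal iteration of \(\HOD\) by the measure \(U\cap\HOD\), not by \(E\). I would therefore have to show, via a comparison argument in the spirit of \cref{thm:ua}, that the ultrapower of \(\HOD\) by \(U\cap\HOD\) agrees with \(\HOD\) up to \(\lambda\). Concretely, I would try choosing \(U\) to be a measure of \(V\) so that \(j_U(\delta)\) exceeds the Wadge rank of the set of reals coding \(\HOD\cap P(\lambda)\). Then elementarity, together with the fact that \(j_U\) fixes the relevant \(\OD\) definitions of these codes, gives the agreement. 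If this fails, I would fall back on the derived-model representation of \cref{thm:derived}: pick \(U\) induced by a total extender of \(M\) with critical point corresponding to \(\delta\).
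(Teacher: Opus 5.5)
The paper does not prove this statement. It is quoted from Woodin, and the authors note that the proof is unpublished in the key case \(V\neq L(\mathbb R,\Hom)\). Your proposal therefore has to stand on its own, and it has a genuine gap: you never argue the hard inclusion \(P(\lambda)\cap\HOD\subseteq j_U(\HOD)\), nor the existence of a normal \(U\) with \(j_U(\delta)>\lambda\). Worse, your one structural claim about \(j_U\restriction\HOD\) is false. The model \(j_U(\HOD)\) is built from all functions \(f:\delta\to\HOD\) in \(V\), not just those in \(\HOD\). So \(j_U\restriction\HOD\) is neither the ultrapower of \(\HOD\) by \(U\cap\HOD\) nor a linear iteration of it, and such a description in fact contradicts the theorem:
\begin{itemize}
\item A linear iteration of \(\HOD\) by \(U\cap\HOD\) and its images is definable in \(\Ult(\HOD,U\cap\HOD)\), so its final model is contained in \(\Ult(\HOD,U\cap\HOD)\).
\item That ultrapower omits \(U\cap\HOD\).
\item But \(U\cap\HOD\) is coded in \(\HOD\) by a subset of \((2^\delta)^{\HOD}<\Theta\), so agreement up to any \(\lambda\geq(2^\delta)^{\HOD}\) would put \(U\cap\HOD\) into \(j_U(\HOD)\).
\end{itemize}
For the same reason, arranging that \(U\cap\HOD\) is the normal measure of a \(\lambda\)-strong extender \(E\in\HOD\) accomplishes nothing. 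That condition is necessary for a factorization \(j_U\restriction\HOD=k\circ j_E\) with \(\crit(k)>\lambda\), but \(U\cap\HOD\) does not determine \(j_U\restriction\HOD\), and nothing in your plan produces the factorization. What is missing is exactly the content of Woodin's analysis of the largest Suslin cardinal: a mechanism that makes some \(V\)-normal measure on \(\delta\) act on \(\HOD\) like a long, \(\lambda\)-strong extender.

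Your fallbacks do not supply this.
\begin{itemize}
\item \emph{Existence of \(U\).} That \(V\)-normal measures on \(\delta\) exist with \(j_U(\delta)\) above an arbitrary \(\lambda<\Theta\) is part of what must be proved, and you simply assume it.
\item \emph{Elementarity.} Elementarity of \(j_U\) on \(V\) is not available. If \L o\'s's theorem held for \(\Ult(V,U)\), that model would have the same sets of reals as \(V\), hence equal \(L(P(\mathbb R))=V\), and \(j_U\) would fix the definable ordinal \(\delta\). This is why the paper passes to Spector's generic embeddings when it applies the theorem.
\item \emph{Wadge ranks of codes.} Even granting an elementary \(j:V\to M\), an \(\OD\) definition of \(x\in\HOD\cap P(\lambda)\) only shows that \(j(x)\) is \(\OD\) in \(M\), and in general \(j(x)\neq x\) because \(\crit(j)=\delta<\lambda\). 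You need \(x\) itself to be \(\OD\) in \(M\), and bounding the Wadge ranks of codes does not touch this.
\item \emph{Strength of \(\delta\) in \(\HOD\).} You take it as given that \(\delta\) is \(\lambda\)-strong in \(\HOD\) for all \(\lambda<\Theta\). Under just \(\AD^+ + V = L(P(\mathbb R))\) no fine-structural \(\HOD\) analysis is available. Moreover, this strength is essentially a consequence of the theorem, since the length-\(\lambda\) extender derived from \(j_U\restriction\HOD\) is \(\OD\). So it cannot serve as input here, except in settings such as \(L(\mathbb R)\) where a \(\HOD\) analysis is known.
\end{itemize}

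Two smaller points. First, \(\delta=\theta_\alpha\) is false in general: in \(L(\mathbb R)\), \(\delta=\delta^2_1<\theta_0=\Theta\). Second, in your easy inclusion, the fact that \(j_U(\HOD)\) is an \(\OD\) class does not by itself make its elements \(\OD\). Instead use \(\HOD=L[S]\), so that \(j_U(\HOD)=L[j_U(S)]\) with \(j_U(S)\) \(\OD\).
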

The proof of this theorem is unpublished in the key case \(V \neq L(\mathbb R,\Hom)\).

In the theorem below, as well as in  
\cref{thm:unique} and \cref{thm:ua_N},
we make use of a technique for recovering \L o\'s's
theorem in the context of ZF due to Mitchell Spector \cite{Spector1988,Spector1991}.
\begin{thm}[Spector]\label{thm:spector}
    Assume \(\ZF + \DC + V = L(X)\) for some set \(X\). 
    Then for any measure \(U\), in a forcing extension,
    there is an inner model \(M\), an elementary embedding \[j : V \to M\]
    and an embedding \(k : \Ult(V,U)\to M\)
    such that \(\Ord\subseteq \ran(k)\) and
    \(k\circ j_U = j\).
\end{thm}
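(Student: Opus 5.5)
The plan is to obtain \(M\) as a generic ultrapower of \(V\) in which choice functions, unavailable in \(V\), are supplied by a generic object. The construction uses the fact that \(V = L(X)\) makes every set definable from ordinals, \(X\), and finitely many elements of \(\mathrm{trcl}(\{X\})\).

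Let \(U\) be a measure on a set \(I\), and put \(S = \mathrm{trcl}(\{X\})^{<\omega}\). Every \(x\in V\) has the form \(\tau^{L(X)}(\alpha,a,X)\) for some term \(\tau\), ordinal \(\alpha\) and \(a\in S\). The only obstruction to \L o\'s's theorem for \(\Ult(V,U)\) is the existential step: given \(\{i : \exists y\,\varphi(y,f(i))\}\in U\), we must choose witnesses uniformly. Ordinal parameters can be minimized, but the parameter \(a\in S\) cannot be chosen in ZF.

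To supply it, I would force with a poset \(\mathbb P\) whose generic adds, for each \(i\in I\) \emph{modulo \(U\)}, a point of \(S\). Concretely, I would take conditions to be finite-support partial functions \(p\) from \(S\times\omega\) to \(S\), or equivalently a collapse \(\Col(\omega,S)\) taken \(I\)-fold modulo \(U\), chosen so that the generic \(g\) yields a function \(i\mapsto g_i\) enumerating \(S\) in a generic way for \(U\)-almost every \(i\).

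In \(V[g]\), let \(M\) consist of equivalence classes, modulo \(U\), of functions \(i\mapsto \tau(\alpha(i), g_i(n), X)\), where \(\alpha\colon I\to\Ord\) lies in \(V\). Membership and equality are decided by \(U\)-measure. Define \(j(x) = [\text{const}_x]\) and \(k([f]_U) = [f]\) for \(f\colon I\to V\) in \(V\). Then \(k\circ j_U = j\) holds by construction.

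\L o\'s's theorem for this ultrapower is proved by induction on formulas. In the existential step, the witness \(y_i = \tau(\beta(i), a_i, X)\) is obtained as follows. Take \(\beta(i)\) least and the term \(\tau\) minimal. The parameter \(a_i\) is chosen as \(g_i(n)\) for the least \(n\) that works, which exists by genericity: the set of conditions forcing some \(n\) to work for \(U\)-almost all \(i\) is dense, by a standard density argument using countable completeness of \(U\) and DC. This gives elementarity of \(j\) and shows that \(M\) is well founded, using countable completeness and DC to rule out descending \(\omega\)-sequences. Elementarity of \(k\) on \(\Ult(V,U)\) then follows because \(k\) is the identity on \(V\)-functions.

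The main obstacle is \(\Ord\subseteq\ran(k)\). An ordinal of \(M\) is represented by some \(i\mapsto\tau(\alpha(i), g_i(n), X)\), which depends on \(g\). I would handle this with homogeneity of \(\mathbb P\) over \(L(X)\): the ordinal \(\tau(\alpha(i), a, X)\) as \(a\) ranges over \(S\) is controlled by a ground-model ordinal function. Precisely, I would define \(\gamma(i)\) in \(V\) as the least ordinal value, or more carefully the value forced by a weakly homogeneous automorphism argument, and show that \([\gamma]_U\) is mapped by \(k\) onto the given ordinal. If the naive argument only shows that the ordinals of \(M\) form the direct limit of \(\Ult(V,U)\)-ordinals along \(g\), I would instead redefine \(M\) as the inner model generated inside \(V[g]\) by \(\ran(k)\), the sets \(j(x)\), and \([i\mapsto g_i]\). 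I would then check that the ordinal part is unchanged.
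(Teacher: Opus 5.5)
\textbf{There is a genuine gap.} The paper does not reprove this theorem; it cites Spector. The problem is internal to your argument: your forcing reintroduces exactly the choice problem it is meant to remove, and your central density claim is false.

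\textbf{The forcing and the existential step.} Your two descriptions of \(\mathbb P\) are not equivalent, and neither works literally. If the generic has genuine coordinates \(g_i\) (a product over \(I\)), then index sets such as \(\{i : g_i(0)=a\}\) are not in \(V\). So \(U\) does not measure them, and ``decided by \(U\)-measure'' is undefined. You must therefore use the reduced power \(\Col(\omega,S)^I/U\), i.e.\ classes of \(p\colon I\to\Col(\omega,S)\) in \(V\), with truth in \(M\) defined through the generic filter. But then extending a condition \([q]\) means choosing \(p(i)\le q(i)\) for \(U\)-almost every \(i\). That is, you must choose elements of \(S\) along \(I\), which is precisely what ZF cannot do. DC does not help, and countable completeness only makes \(\omega\)-valued functions constant.

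Concretely, take \(V=L(\mathbb R)\models\AD+\DC\), \(U\) the club measure on \(\omega_1\), and \(\varphi(y,i)\) saying that \(y\) is a real coding a wellorder of type \(i\). Suppose \([p]\) forced \(\varphi(\tau(\beta(i),\dot g(n),X),i)\) for \(U\)-almost every \(i\). Set \(r_i=\tau(\beta(i),p(i)(n),X)\), or \(\tau(\beta(i),a_0,X)\) for a fixed \(a_0\) when \(n\notin\operatorname{dom}p(i)\). Then \(i\mapsto r_i\) is an injection in \(V\) of a stationary set of countable ordinals into \(\mathbb R\), contradicting \(\AD\). So no condition forces a witness, and \L o\'s's theorem fails for your \(M\).

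\textbf{The missing idea.} Make conditions multivalued. Use the Boolean completion \(\mathbb B=\mathrm{RO}(\Col(\omega,S))\), and let conditions be \(U\)-classes of functions \(b\colon I\to\mathbb B\setminus\{0\}\), with truth in \(M\) given by which such classes lie in the generic. The needed extension is then the Boolean value \(b(i)\wedge\|\exists\beta\,\varphi(\tau(\beta,\dot g(n),X),f(i))\|\), for the least \((\tau,n)\) making it nonzero. This is definable without choice, and countable completeness makes \((\tau,n)\) constant \(U\)-almost everywhere.

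\textbf{Ordinals.} The same device, not homogeneity, gives \(\Ord\subseteq\ran(k)\). Homogeneity cannot pin down \(\tau(\alpha(i),\dot g(n),X)\), because its value really depends on \(\dot g(n)\); take \(\tau\) to be the rank of the parameter. Instead, given \([b]\), let \(\gamma(i)\) be least with \(b(i)\wedge\|\tau(\alpha(i),\dot g(n),X)=\gamma(i)\|\neq 0\). This meet is a condition forcing the ordinal to equal \(k([\gamma]_U)\). Your fallback of shrinking \(M\) is not an argument.

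\textbf{Well-foundedness.} This should then be derived from \(\Ord\subseteq\ran(k)\). The ordinals of \(M\) embed order-preservingly into those of \(\Ult(V,U)\). That order has a rank function in \(V\), so it stays well-founded in the extension. Your direct appeal to DC and countable completeness does not apply to descending sequences in the extension, which need not lie in \(V\).

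\textbf{Elementarity of \(k\).} Your claim that \(k\) is elementary is false in general. If it were, \(j_U\) would satisfy \L o\'s's theorem, which fails in ZF. In the \(\AD\) example, every \(f\colon\omega_1\to\mathbb R\) is constant on a club, so \(\Ult(L(\mathbb R),U)\) has no real coding \(\id_U=\omega_1\). The statement only asks for an embedding, i.e.\ preservation of \(\in\) and \(=\), and that is all your definition yields.
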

See \cite[Corollary 1.3, Theorems 3(b), 4, and 8]{Spector1991}.
For the purposes of \cref{thm:ua_hod_s}, one could use Woodin's generic ultrapowers
as in \cite[Theorem 4.7]{UAFromDet}. In fact, Woodin's construction is equivalent to Spector's
in the context of measures on ordinals below \(\Theta\) under \(\AD\).
On the other hand, Spector's more general construction seems
essential to the later results, \cref{thm:unique} and \cref{thm:ua_N}.

\begin{thm}\label{thm:ua_hod_s}
    Under \(\AD^+ + V = L(P(\mathbb R))\), 
    \(\HOD_s\) satisfies \(\UA\) for every \(s \in\Ord^\omega\).
    \begin{proof}
        We first prove the theorem in the case that \(V = L(\mathbb R,\Hom)\).
        Let \(H\) be a pointwise definable transitive structure that is elementarily embeddable into \(\mathscr H\). 
        
        Fix a countable transitive model \(\bar V\equiv V\) in \(\HOD\), constructed by type omission, so that
        \(H = \mathscr H^{\bar V}\). 
        Fix \(s\in \Ord^\omega\cap \bar V\). We will show \(\UA\) holds in \(P = \mathscr H_s^{\bar V}\).

        Fix internal ultrapowers \(e_0 : P\to P_0\) and \(e_1 : P \to P_1\), and let \(s_n = e_n(s)\). For \(n = 0,1\), we imitate the construction from \cref{thm:ua}. That is, construct \(\bar V_n\equiv V\) in \(\HOD\) by relativizing \cref{prp:hod-realization} 
        so that
        \[P_n = \mathscr H_{s_n}^{\bar V_n}\]
        Applying \cref{thm:derived} in \(\bar V_n\), fix \(\bar M_n\subseteq \mathscr H^{\bar V_n}\), \(\bar\eta_n\), and an elementary \(j_n : \bar V_n\to D_n\) where \(D_n\) is a derived model of \(\bar M_n\) at \(\bar\eta_n\).
        Let \(t_n =j_n(s_n)\) and define
        \[
            i_n : P_n\to\mathscr H^{D_n}_{t_n}
        \]
        by \(i_n = j_n\restriction P_n\).

        By \cref{thm:derived}, fix \(\gamma_n < \bar\eta_n\) such that \(t_n\) is
        \({<}\gamma_n\)-generic over \(\bar M_n\).
        Let \(\pi_n : \bar M_n\to M_n^*\) be a Neeman-style \(\mathbb R^*\)-genericity iteration above \(\gamma_n\), where \(\mathbb R^* = \mathbb R\cap \HOD\), such that if \(D_n^*\) is the derived model of \(M_n^*\) as computed in \(M_n^*(\mathbb R^*)\),
        then \(D_0^*\) and \(D_1^*\) are comparable.
        Let \(s_n^* = \pi_n[t_n]\), and let
        \[
            \pi_n^+ : \bar M_n[t_n]\to M_n^*[s_n^*]
        \]
        be the unique extension coming from L\'evy--Solovay.
        Finally let \(k_n = \pi_n^+\circ i_n\), so
        \[
            k_n : P_n\to \mathscr H_{s_n^*}^{D_n^*}
        \]
        is a close embedding.

        We first show that \(D_0^* = D_1^*\).
        We have elementary embeddings
        \begin{align*}
            (k_0\circ e_0)\restriction H : H\to \mathscr H^{D_0^*}\\ 
            (k_1\circ e_1)\restriction H : H \to \mathscr H^{D_1^*}
        \end{align*}
        Assuming towards a contradiction that 
        \(D_0^* \in D_1^*\), for example, we would have
        that \(H\) is coded by a real in \(\mathscr H^{D_1^*}\).
        Since \(\crit(k_1\circ e_1) > (2^{\aleph_0})^H\),
        \(H\) and \(\mathscr H^{D_1^*}\) have the same reals,
        and so \(H\) is coded by a real in \(H\), which is a contradiction. This proves that \(D_0^* = D_1^*\),
        so let us denote this model by \(D^*\).

        Since \(H\) is pointwise definable, it follows that
        \(k_0\circ e_0 \restriction H = k_1\circ e_1\restriction H\).
        Therefore
        \(k_0\circ e_0[s] = k_1\circ e_1[s]\),
        and hence \(k_0\circ e_0(s) = k_1\circ e_1(s)\).
        In other words, \(k_0(s_0) = k_1(s_1)\), and so
        \(\mathscr H_{k_0(s_0)}^{D^*} = \mathscr H_{k_1(s_1)}^{D^*}\).
        Now \(k_0\circ e_0\) and \(k_1\circ e_1\) have the same target model, and
        since every element of \(P = \mathscr H_s^{\bar V}\) is definable in \(P\)
        from \(s\), 
        and \(k_0\circ e_0(s) = k_1\circ e_1(s)\), 
        we obtain \(k_0\circ e_0 = k_1\circ e_1\). This yields \(\UA\) for \(e_0\) and \(e_1\), as desired.

        Now assume \(V \neq L(\mathbb R,\Hom)\).
        Let \(\delta\) be the largest Suslin cardinal. 
        We will show that 
        for all \(s\in \Theta^\omega\), \(\HOD_s\) satisfies \(\UA\). This suffices for the result: if \(t\in \Ord^\omega\), then since \(\HOD_t\) is a \(\Theta\)-cc extension of \(\HOD\), there is a set \(S\in \HOD\) such that \(\ot(S) < \Theta\) and \(\ran(t)\subseteq S\), and therefore \(\HOD_t = \HOD_s\) for some \(s\in \Theta^\omega\).

        Let \(\delta\) be the largest Suslin cardinal, which exists since
        \(\Hom\neq P(\mathbb R)\). 
        By Suslin-co-Suslin reflection, \(\Hom = \Hom^{L(\mathbb R,\Hom)}\)
        and for all \(s\in \delta^\omega\), \(\HOD_s\cap V_\delta = \HOD^{L(\mathbb R, \Hom)}_s\cap V_\delta \).
        Thus, \(L(\mathbb R,\Hom)\) satisfies \(\AD^+ + V = L(\mathbb R,\Hom)\),
        so by the previous case, \(\UA\) holds in \(\HOD^{L(\mathbb R, \Hom)}_s\) for all \(s\in \delta^\omega\).
        In the context of \(\ZFC\), since \(\UA\) is a \(\Pi_2\)-sentence, 
        \(\UA\) is downwards absolute to any 
        \(V_\rho\) with \(|V_\rho| = \rho\).
        Thus for all \(s\in \delta^\omega\),
        \(\UA\) holds in \[\HOD^{L(\mathbb R, \Hom)}_s\cap V_\delta = \HOD_s\cap V_\delta\]

        Assume towards a contradiction that \(\UA\) fails in 
        \(\HOD_s\) for some \(s\in\Theta^\omega\). 
        Fix a limit ordinal \(\gamma < \Theta\) such that \(\sup(s) < \gamma\) and \(\UA\) fails in \(\HOD_s\cap V_\gamma\).
        We claim there is some \(\lambda < \Theta\) large enough that
        \(\HOD_s\cap V_\gamma \subseteq (\HOD\cap V_\lambda)[s]\).
        For this, note that every bounded subset of \(\Theta\) in \(\HOD_s\) has a \(\mathbb B\)-name in \(\HOD\cap V_\Theta\) where \(\mathbb B\) is the \(\infty\)-Borel Vop\v{e}nka algebra adding an element of \(\gamma^\omega\) to \(\HOD\). 
        This is by a nice name argument, using that 
        \(\mathbb B\) is \(\Theta\)-cc in \(\HOD\)
        and \(\mathbb B\subseteq \HOD\cap V_\Theta\). 
        Now let \(S\) be a bounded subset of \(\Theta\) coding \(\HOD_s\cap V_\gamma\), and fix \(\lambda < \Theta\) large enough that this set has a \(\mathbb B\)-name \(\dot S\) in \(\HOD\cap V_\lambda\). Then
        \(\HOD_s\cap V_\gamma \subseteq (\HOD\cap V_\lambda)[s]\),
        since \(\dot S\) can be interpreted  
        in \((\HOD\cap V_\lambda)[s]\).
       
        Let \(j : V\to M\) be a Spector ultrapower by a \(\delta\)-complete measure on \(\delta\) such that
        \(j(\delta) > \lambda\), \(\HOD^M\cap V_\lambda = \HOD\cap V_\lambda\). Note that \(\Ord^\omega\cap V\subseteq M\) by countable choice, and in particular \(s\in M\). 
        Since all measures on ordinals are \(\OD\),
        \(\HOD^M_s \subseteq \HOD_s\). On the other hand,
        \(\HOD\cap V_\lambda\subseteq \HOD^M_s\), so \[\HOD_s\cap V_\gamma\subseteq (\HOD\cap V_\lambda)[s]\subseteq \HOD^M_s\] 
        It follows that \(\HOD_s\cap V_\gamma = \HOD^M_s\cap V_\gamma\). Therefore \(\UA\) fails in \(\HOD^M_s\cap V_\gamma\), and hence it fails in \(\HOD^M_s\cap V_{j(\delta)}\),
        contradicting that in \(V\),
        \(\UA\) holds in \(\HOD_s\cap V_\delta\)
        for any \(s\in \delta^\omega\).
    \end{proof}
\end{thm}

Using \cref{thm:derived}, we can generalize Woodin's theorem on the \(\HOD\)-ultrafilter hypothesis
to the context of \(\AD^+ + V = L(P(\mathbb R))\). As in the case of
\cref{thm:ua}, Woodin proved the slightly
weaker \((\HOD\cap V_\Theta)\)-ultrafilter hypothesis, again using more
\(\AD^+\)-theory.

The following theorem is due to Woodin when
\(s\in \omega^\omega\).
\begin{thm}\label{thm:hod_uf}
    Assume \(\AD^+ + V = L(P(\mathbb R))\). Then for all \(s\in \Ord^\omega\), the \(\HOD_s\)-ultrafilter hypothesis holds.
\end{thm}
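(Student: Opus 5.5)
The plan is to use the generic embedding of \cref{thm:derived} together with homogeneity of the derived model. Fix \(s\in\Ord^\omega\) and a measure \(U\in\HOD_s\), say \(U\) is an \(\HOD_s\)-ultrafilter on a set \(X\in \HOD_s\), countably complete in \(\HOD_s\). We want a countably complete filter in \(V\) extending \(U\). The first reduction: it suffices to show that every countable subfamily \(\{A_n : n<\omega\}\subseteq U\) (the sequence taken in \(V\)) has nonempty intersection, since then the filter generated by \(U\) in \(V\) is countably complete and proper. As in the proof of \cref{thm:ua_hod_s}, we may also assume \(s\in\Theta^\omega\) if convenient, since \(\HOD_t=\HOD_s\) for suitable \(s\).

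Let \(M\), \(\eta\), and \(j:V\to D\) be as in \cref{thm:derived}, with \(j\) living in a generic extension \(M[g]\) by the symmetric collapse, and \(D\) the derived model. Set \(t=j(s)\). By elementarity \(j(U)\) is a countably complete ultrafilter in \(\HOD_t^D\) on \(j(X)\). The key point is that \(t\) is \({<}\gamma\)-generic over \(M\) for some \(\gamma<\eta\) (as used in the proof of \cref{thm:ua_hod_s}), so by homogeneity of the remaining collapse, \(\HOD^D_t\subseteq M[t]\) and in fact \(j(U)\in M[t]\). Since \(M[t]\) is a model of \(\ZFC\) in which \(\eta\) is still a limit of Woodin cardinals and hence of measurables, I would aim to show that \(j(U)\) restricted to \(\HOD^D_t\) is a filter base that is countably complete in \(M[t]\), and then use that \(M[t]\subseteq M[g]\) and \(M[g]\) contains all countable sequences from the relevant sets (the collapse at a limit of Woodins adds reals but countable sequences of elements of \(M[t]\) are handled since \(j[\,\cdot\,]\) of countable sets lies in \(M[g]\)). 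Concretely: given \(\langle A_n\rangle\subseteq U\) in \(V\), the sequence \(\langle j(A_n)\rangle\) lies in \(M[g]\), each \(j(A_n)\in j(U)\). Then I would argue that \(\bigcap_n j(A_n)\neq\emptyset\) in \(M[g]\): this requires that \(j(U)\) be countably complete not just in \(\HOD^D_t\) but in \(M[g]\). For that I would use that \(j(U)\) is ordinal definable from \(t\) in \(D\) and hence, by homogeneity, lies in \(M[t]\); and in \(M[t]\), the \(\HOD_t^D\)-measure \(j(U)\) should extend to a countably complete ultrafilter—this is where the choice-model structure of \(M[t]\) (it is a ZFC model where \(\HOD_t^D\) is a definable inner model) and the fact that \(\Theta^D\le(\eta^+)^M\) enter. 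Once \(\bigcap_n j(A_n)\ne\emptyset\) in \(M[g]\), elementarity of \(j\) (applied to the statement "\(\bigcap_n A_n\neq\emptyset\)", noting \(j(\langle A_n\rangle_n)=\langle j(A_n)\rangle_n\) since \(j\) fixes \(\omega\) and countable sequences map pointwise) gives \(\bigcap_n A_n\ne\emptyset\) in \(V\).

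The main obstacle is the middle step: showing that the \(\HOD_t^D\)-measure \(j(U)\) is countably complete with respect to countable sequences existing in \(M[g]\), i.e. a version of the ultrafilter hypothesis for \(\HOD^D_t\) inside the \(\ZFC\) model \(M[t]\). My first attempt would be to reduce \(j(U)\) via its ultrapower: \(\HOD^D_t\) is an inner model of \(M[t]\) computed from \(M\)'s structure below \(\eta\), and I would try to realize \(\Ult(\HOD^D_t, j(U))\) as a restriction of an ultrapower of \(M[t]\) by an extender of \(M\) with critical point below \(\eta\) (using that \(\eta\) is a limit of measurables of \(M\) and \(U\) concentrates on a set of size \({<}\Theta^D\le\eta\)), which would make \(j(U)\) the trace of a countably complete \(M[t]\)-measure and finish the argument.
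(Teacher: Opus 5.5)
Your opening reduction (show \(\bigcap\sigma\neq\emptyset\) for every countable \(\sigma\subseteq U\)) and your final pull-back along \(j\), using \(j(\sigma)=j[\sigma]\), match the paper. The proof breaks at the middle step you flag as the main obstacle. That step is not a technical lemma; it is the whole theorem.

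Since \(j(\sigma)\in D\) and \(j\) is elementary, the claim that \(\bigcap\tau\neq\emptyset\) for every countable \(\tau\subseteq j(U)\) with \(\tau\in D\) is exactly the \(\HOD^D_t\)-ultrafilter hypothesis inside \(D\). So it is the original statement transported to \(D\). Homogeneity only gives \(j(U)\in M[t]\), which says nothing about countable completeness.

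Your proposed fix, realizing \(\Ult(\HOD^D_t,j(U))\) through an extender of \(M\), has no support. \(\HOD^D_t\) is in general a proper inner model of \(M[t]\) (think of \(\HOD^D\) as a direct limit of iterates), and nothing makes its measures traces of \(M\)-extenders. Proving that would amount to a fine-structural analysis of \(\HOD_t\), which is precisely what is unavailable here. The bound \(\Theta^D\le(\eta^+)^M\) does not help with this. Moreover, even an extension of \(j(U)\) to a countably complete ultrafilter of \(M[t]\) would not finish the argument. The sequence \(j(\sigma)\) lives in a further collapse extension of \(M[t]\), so you would also need to control the completeness of that extension relative to the collapse.

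The missing idea is comparison. The paper first reduces to \(V=L(\mathbb R,\Hom)\). It then works with a countable \(\bar V\) realizing the pointwise definable hull \(H\) of \(\mathscr H\) (\cref{prp:hod-realization}), so that Neeman-style genericity iterations are available. The construction runs as follows:
\begin{enumerate}
\item Inside \(\bar V\), take \(j:\bar V\to D\) from \cref{thm:derived}.
\item Choose a genericity iteration \(\pi:\bar M\to M^*\) whose critical point lies above the generics adding \(j(\sigma)\) and \(j(s)\), so that \(\pi\) lifts to \(\pi^+\).
\item Set \(k=\pi^+\circ j\restriction H_s\).
\item Run the same construction on the ultrapower \(e:H_s\to H_1=\Ult(H_s,U)\). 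As in \cref{thm:ua_hod_s}, the two resulting derived models must coincide: they have the same reals, and \(H\) cannot be coded by a real of \(H\).
\item Pointwise definability of \(H\) and definability of every element of \(H_s\) from \(s\) then give \(k_1:H_1\to Q=\mathscr H^{D^*}_{k(s)}\) with \(k_1\circ e=k\).
\end{enumerate}
Now \(k_1(\id_U)\in\bigcap k[\sigma]=\bigcap\pi^+(j(\sigma))\). Hence \(\bigcap j(\sigma)\neq\emptyset\), and elementarity of \(j\) gives \(\bigcap\sigma\neq\emptyset\).

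So the ultrapower by \(U\) is indeed absorbed into an iteration, as you hoped. But this comes from comparison at the level of countable hulls, not from finding an extender of the fixed \(M\) in \(V\).
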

\begin{proof}
    We can reduce to the case that \(V = L(\mathbb R,\Hom)\)
    by essentially the same argument from \cref{thm:ua_hod_s}.

    Suppose \(H\) is the pointwise definable hull of \(\mathscr H\).
    By \cref{prp:hod-realization}, we can form a model \(\bar V\equiv V\)
    such that \(H = \mathscr H^{\bar V}\). 
    Fix \(s\in \Ord^\omega\cap \bar V\), and suppose \(e : H_s \to H_1\) is the ultrapower of \(H_s = (\mathscr H_s)^{\bar V}\) by a measure \(U\in H_s\).
    It suffices to show that
    \(U\) extends to a countably complete filter in \(\bar V\).
    Fix \(\sigma \in (P_{\omega_1}(U))^{\bar V}\). We will show that
    \(\bigcap \sigma\neq \emptyset\).

    We use the construction from the proof of \cref{thm:ua_hod_s},
    with \(M\) and \(\eta\) as in \cref{thm:derived} and
    \(\mathbb R^*=\mathbb R\cap M\).
    Applying \cref{thm:derived} inside \(\bar V\), let
    \(\bar M\subseteq H\), \(\bar\eta\), and the generic elementary embedding
    \(j:\bar V\to D\) be as in that theorem, where
    \(D\) is a derived model of \(\bar M\) at \(\bar\eta\). 
    Since \(j(\sigma)\) and \(j(s)\) are in \(D\), they belong to a common
    \({<}\bar\eta\)-generic extension of \(\bar M\).
    Thus by taking the critical point of the genericity iteration
    \(\pi : \bar M \to M^*\) sufficiently large, we can ensure that
    \(\pi\) lifts uniquely to an elementary embedding
    \(\pi^+\) of
    \(\bar M[j(\sigma),j(s)]\).
    As in \cref{thm:ua_hod_s}, let \(k = \pi^+\circ j\restriction H_s\).

    Let \(Q = \mathscr H_{k(s)}^{D^*}\) be the target model of \(k\).
    The construction from \cref{thm:ua_hod_s} yields \(k_1 : H_1\to Q\) such that
    \(k = k_1\circ e\).
    
    Let \(a = \id_U\). Then \(a\in \bigcap e[\sigma]\),
    so \(k_1(a)\in \bigcap (k_1\circ e)[\sigma] =  \bigcap k[\sigma]\). 
    In particular, \(\bigcap k[\sigma]\neq \emptyset\), and 
    since \(k[\sigma] = \pi^+(j[\sigma])\), it follows that
    \(\bigcap j[\sigma]\neq \emptyset\).
    Finally, the elementarity of
    \(j:\bar V\to D\) implies \(\bigcap\sigma\neq\emptyset\).
\end{proof}

\section{Consequences for the Nairian model}\label{sec:soft_con}
In this section, we record some consequences of \cref{sec:hod_of_s}
for the structure of Nairian models.
\begin{defn}
Let \(I_\xi = \textstyle\bigcup_{\beta < \xi}\beta^\omega\).
Recall the \textit{Nairian models} from the introduction:
\[N_\xi^+ = \HOD_{I_\xi},\qquad N_\xi = N_\xi^+\cap V_\xi\]
and our convention that \(N^+\) denotes \(N_\xi^+\) and \(N\) denotes \(N_\xi\).\end{defn}

We only consider such models in the context of determinacy.
We are interested in the case that \(\xi < \Theta\) and
\(N\) is a model of \(\ZF\).
Blue--Larson--Sargsyan \cite{NairianModels} prove that this is possible;
in fact, \(\xi\) can be strongly inaccessible in \(N^+\).
Here, a cardinal \(\kappa\) is \textit{strongly inaccessible} if for all
\(\alpha<\kappa\), there is no cofinal map from \(V_\alpha\) to \(\kappa\).

\subsection{UA preliminaries}
Let \(\meas_\kappa(X)\) denote the set of \(\kappa\)-complete
    measures on \(X\) and \(\meas(X) = \meas_{\omega_1}(X)\).
\begin{defn}
    The \textit{Ketonen order} is defined on \(U,W\in\meas(\delta)\)
    by setting \(U\ke W\) if for some \(S\in W\),
    there are measures \((U_\alpha)_{\alpha \in S}\in\prod_{\alpha\in S}\meas(\alpha)\)
    such that for all \(A\subseteq \delta\), 
    \[A\in U\iff \{\alpha\in S : A\cap \alpha\in U_\alpha\}\in W\] 
\end{defn}
ZF + DC proves that the Ketonen order is a well-founded partial order \cite[Theorem 3.7]{UAFromDet}
and \(\UA\) is equivalent to the linearity of the Ketonen order \cite[Theorem 3.8]{UAFromDet}.

The linearity of the Ketonen order allows us to
define minimum measures: 
\begin{defn}[UA] For each cardinal \(\delta\), \(\mathscr K_\delta\)
denotes the Ketonen least uniform measure on \(\delta\), if one exists.
\end{defn}

\begin{lma}[{\cite[Theorem 7.2.14]{UABook}}]\label{lma:min_weakly_normal}
    If \(\delta\) is regular and \(\mathscr K_\delta\) exists, then \(\mathscr K_{\delta}\) is the unique weakly normal measure on \(\delta\) concentrating on ordinals \(\alpha\) such that \(\mathscr K_{\cf(\alpha)}\)
    does not exist.\qed
\end{lma}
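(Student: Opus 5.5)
Work under UA and DC, where the Ketonen order is a well-founded linear order on \(\meas(\delta)\). Write \(U = \mathscr K_\delta\). There are three tasks. First, show \(U\) is weakly normal. Second, show \(U\) concentrates on the set \(S\) of \(\alpha<\delta\) for which \(\mathscr K_{\cf(\alpha)}\) does not exist. Third, show \(U\) is the only weakly normal measure on \(\delta\) concentrating on \(S\). The tool throughout is the standard correspondence between Ketonen reductions and ultrapowers: \(W\ke U\) iff there is some \(X\in j_U(\meas(\delta))\), a measure on some ordinal below \(\id_U\) in \(M_U\), such that \(W = j_U^{-1}[X]\).

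For weak normality, suppose \(f\) is regressive on a \(U\)-large set and \(f\) is not bounded \(U\)-almost everywhere. Push \(U\) forward along \(f\) and take the induced uniform measure \(W\) on \(\delta\); uniformity comes from regularity of \(\delta\) and unboundedness of \(f\). The ordinal \(j_U(f)(\id_U)<\id_U\) witnesses \(W = j_U^{-1}[X]\) for a principal ultrafilter \(X\) of \(M_U\) concentrating below \(\id_U\). Hence \(W\ke U\), contradicting minimality. So \(\id_U\) is the least ordinal \(\xi\) with \(\sup j_U[\delta]\le\xi\), up to the usual bookkeeping.

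For concentration, let \(\alpha=\id_U\) in \(M_U\) and suppose \(M_U\) thinks \(\mathscr K_{\cf(\alpha)}\) exists. Put \(\gamma = \cf^{M_U}(\alpha)\), and in \(M_U\) transfer \(\mathscr K_\gamma\) to a uniform measure \(Z\) on \(\alpha\) via a cofinal increasing map \(\gamma\to\alpha\). Then \(W = j_U^{-1}[Z]\) is a uniform measure on \(\delta\): it is uniform because \(Z\) is uniform on \(\alpha\ge\sup j_U[\delta]\), and weak normality puts \(\alpha\) at that supremum. By construction \(W\ke U\), which contradicts minimality. This shows \(U\) concentrates on \(S\).

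Uniqueness is the main step. Let \(W\) be weakly normal on \(\delta\) and concentrate on \(S\). By linearity, \(U\ke W\) or \(U = W\) (since \(W\ke U\) is impossible). Assume \(U\ke W\), witnessed by \(X\in M_W\), a measure on some \(\beta\le\id_W\) with \(U = j_W^{-1}[X]\). Since \(U\) is uniform, \(X\) must concentrate above \(\sup j_W[\delta]\), and by weak normality of \(W\) that supremum equals \(\id_W\). So \(X\) is, in effect, a uniform measure on \(\id_W\). Then \(M_W\) sees a uniform countably complete measure on \(\cf(\id_W)\), obtained by projecting \(X\) along a cofinal sequence. I expect this step to be the main obstacle, because one must make sure \(X\) genuinely induces a uniform measure on the cofinality inside \(M_W\); I would handle it using \(\ke\)-minimality inside \(M_W\). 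By UA in \(M_W\), \(\mathscr K_{\cf(\id_W)}\) therefore exists in \(M_W\). This contradicts \(W\) concentrating on \(S\), by Łoś. Hence \(W = U\), which completes the proof of \cref{lma:min_weakly_normal}.
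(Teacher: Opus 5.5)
Your argument is correct and is essentially the standard proof of the cited \cite[Theorem 7.2.14]{UABook}; the paper gives no proof of its own, but its proof of \cref{lma:ad_weakly_normal} runs the analogues of your first two steps. In both, Ketonen minimality forces weak normality via \(f_*(U)\ke U\) and forces concentration via the transferred measure, and uniqueness holds because \(U\ke W\) would make \(\cf^{M_W}(\id_W)\) Fr\'echet in \(M_W\).

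The step you flag as the main obstacle is routine and needs no Ketonen minimality in \(M_W\): \(X\) contains \([j_W(\beta),\id_W)\) for every \(\beta<\delta\) (it does not concentrate above \(\sup j_W[\delta]\), as you wrote), so its pushforward along \(\xi\mapsto\min\{\nu : c(\nu)\geq\xi\}\), for \(c\in M_W\) cofinal increasing from \(\cf^{M_W}(\id_W)\) into \(\id_W\), is already uniform. Also note that the argument is a \(\ZFC+\UA\) one, since it uses \L o\'s and the elementarity of \(j_W\), and that uniqueness tacitly requires \(W\) to be uniform.
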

Recall that a measure \(U\) on \(\delta\) is \textit{weakly normal} if every
regressive function on \(\delta\) is bounded on a \(U\)-large set.
We use this notion even in the case that \(\delta\) is a singular ordinal,
noting, however, that for singular cardinals, a different notion of weak normality is also in use
\cite[Definition 4.4.14]{UABook}.

The argument for \cref{lma:min_weakly_normal} yields:
\begin{lma}Assume \(\AD + \DC_\mathbb R\).\label{lma:ad_weakly_normal}
    Every ordinal \(\delta < \Theta\) of uncountable cofinality 
    carries a weakly normal measure that concentrates 
    on \(S^\delta_\omega\).
    \begin{proof}
        Let \(W\) be a Ketonen minimal measure
        extending the tail filter on \(\delta\).
        If \(f : \delta\to \delta\) is regressive on a \(W\)-large set,
        then \(f_*(W) \ke W\), and so \(f_*(W)\) is not uniform, which means that
        \(f_*(W)\) concentrates on a bounded set; in other words, \(f\) is bounded on a \(W\)-large set.
        This proves \(W\) is weakly normal.

        If \(S^\delta_\omega\notin W\), let \(S = \delta\setminus S^\delta_\omega\). 
        For each \(\alpha\in S\), choose
        \(U_\alpha\in \meas(\alpha)\) extending the tail filter on \(\alpha\).
        (Recall Kunen's theorem that the measures are well-ordered.)
        Put \(A\in U\) if and only if \(\{\alpha\in S : A\cap \alpha\in U_\alpha\}\in W\).
        Then \(U\ke W\), but clearly \(U\) extends the tail filter, contrary to the minimality
        of \(W\).
    \end{proof}
\end{lma}

\begin{defn}For any family of sets \(F\), \(\kappa_F\)
denotes the least cardinal \(\gamma\) such that for some sequence \(\langle A_\xi : \xi < \gamma\rangle\subseteq F\),
\(\bigcap_{\xi < \gamma}A_\xi = \emptyset\).
\end{defn}
If \(F\) is an ultrafilter, then \(\kappa_F\) is the usual completeness of \(F\); that is, \(\kappa_F = \crit(j_F)\). We will employ this more general notion in the case that \(F\) is an ultrafilter in some transitive model of set theory. 

A measure on a regular cardinal is \textit{seminormal} if it extends the closed unbounded filter.
The following theorem is one of the main results in the analysis of supercompactness
and strong compactness under \(\UA\).
\begin{thm}[{UA, \cite[Proposition 8.4.20(5)–(6), Theorem 7.3.34]{UABook}}]\label{thm:supercompact}
    Suppose \(\delta\) is a regular cardinal that carries distinct seminormal measures, and let \(\kappa = \kappa_{\mathscr K_\delta}\).
    Then \(\mathscr K_\delta\) witnesses that \(\kappa\) is \(\delta\)-strongly compact and \({<}\delta\)-supercompact. Moreover, \(\kappa\) is \(\delta\)-supercompact. \qed
\end{thm}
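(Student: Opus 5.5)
We would prove \cref{thm:supercompact} in three stages: first \(\delta\)-strong compactness, then \({<}\delta\)-supercompactness, then full \(\delta\)-supercompactness. Throughout, \(U = \mathscr K_\delta\), \(j = j_U : V\to M_U\), and \(\kappa = \kappa_U = \crit(j)\).

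\textbf{Stage 1: \(U\) is weakly normal and \(\id_U = \sup j[\delta]\).} By \cref{lma:min_weakly_normal}, \(U\) is weakly normal and concentrates on ordinals \(\alpha\) such that \(\mathscr K_{\cf(\alpha)}\) does not exist. Weak normality gives \(\id_U = \sup j[\delta]\). So in \(M_U\), the ordinal \(\sup j[\delta]\) has cofinality some \(\gamma\) with \(\mathscr K_\gamma^{M_U}\) nonexistent.

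\textbf{Stage 2: \(\delta\)-strong compactness.} The hypothesis that \(\delta\) carries two distinct seminormal measures should force \(\gamma < j(\kappa)\). Here is how I would argue it. Take a seminormal \(W \neq U\), and use the linearity of the Ketonen order (UA) to compare \(U\) and \(W\) by internal ultrapowers. Minimality of \(U\) then shows that \(U\) is Rudin--Frolík below, or at least Ketonen below, every uniform measure on \(\delta\). Now suppose \(\cf^{M_U}(\sup j[\delta])\geq j(\kappa)\). Then the regular cardinal \(\gamma\) would lie in \([j(\kappa), j(\delta))\) and carry no uniform measure in \(M_U\). Pulling back via elementarity and the comparison with \(W\), one should get that \(U\) is essentially normal and therefore equal to every seminormal measure, contradicting the hypothesis. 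Granting \(\gamma < j(\kappa)\), a closed cofinal subset of \(\sup j[\delta]\) of order type \(\gamma\) in \(M_U\) can be thickened to a set \(\sigma \in M_U\) with \(j[\delta]\subseteq\sigma\) and \(|\sigma|^{M_U} < j(\kappa)\). This is the standard characterization of \(U\) witnessing that \(\kappa\) is \(\delta\)-strongly compact.

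\textbf{Stage 3: \({<}\delta\)-supercompactness.} Here I would invoke the UA machinery that turns strong compactness into supercompactness. For each regular \(\lambda<\delta\), consider the Ketonen least uniform measure \(\mathscr K_\lambda\). Using Stage 2 together with UA comparison (the ``irreducibility'' analysis), I would show that \(j_{\mathscr K_\lambda}\) is a \(\lambda\)-supercompactness embedding, namely that \(j_{\mathscr K_\lambda}[\lambda]\in M_{\mathscr K_\lambda}\). The route is to compare \(j_{\mathscr K_\lambda}\) with the ultrapower by the fine measure on \(P_\kappa(\lambda)\) derived from \(U\). Then \(U\) itself witnesses \({<}\delta\)-supercompactness, since its ultrapower factors through these.

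\textbf{Final step and main obstacle.} To upgrade to \(\delta\)-supercompactness we need \(j[\delta]\in M_U\). Since \(\sup j[\delta] = \id_U\in M_U\) and \(M_U\) is closed under \({<}\delta\)-sequences by the previous stage, it suffices to show that \(M_U\) computes \(j\restriction\delta\) from \(\id_U\) together with a small parameter. I would try to identify \(j[\delta]\) as the set of ordinals below \(\id_U\) fixed by a definable comparison map in \(M_U\). I expect the main obstacle to be the first half of Stage 2, bounding \(\cf^{M_U}(\id_U)\) below \(j(\kappa)\) using the two seminormal measures. That is where UA comparison must do genuine work, and where the full strength of the cited results from \cite{UABook} would be needed.
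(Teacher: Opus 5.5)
The paper gives no proof of this statement; it quotes it from \cite{UABook}. So your outline has to stand on its own, and it does not. Every substantive step is announced rather than carried out (\emph{one should get}, \emph{I would invoke the UA machinery}, \emph{I would try}), and where you do indicate a mechanism it is invalid.

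\textbf{Stage 2.} Stage 1 is fine, and your reduction in Stage 2 is the right one. Since $U=\mathscr K_\delta$ is weakly normal with $\id_U=\sup j[\delta]$, Ketonen's regularity argument shows that $U$ witnesses $\delta$-strong compactness of $\kappa$ if and only if $\{\alpha<\delta:\cf(\alpha)<\kappa\}\in U$. That argument uses weak normality essentially; it is not a matter of thickening a small cofinal set. Given \cref{lma:min_weakly_normal}, this condition is equivalent to every regular $\iota\in[\kappa,\delta)$ carrying a uniform measure. Proving that is the actual content of the theorem, and it is where UA and the seminormal hypothesis must do real work.

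Both steps you name on the way there fail:
\begin{enumerate}[(a)]
\item Ketonen minimality gives no Rudin--Frol\'{\i}k information.
\item A normal $U$ need not be the only seminormal measure.
\end{enumerate}
For a concrete instance, take a Mitchell model $L[\vec U]$ with $o(\delta)=2$. UA holds there, and the regular cardinals carrying uniform measures are exactly the measurables. The order-zero normal measure is weakly normal and concentrates on non-measurable regular cardinals, so by \cref{lma:min_weakly_normal} it is $\mathscr K_\delta$. It is Rudin--Frol\'{\i}k incomparable with the order-one normal measure, the two are distinct seminormal measures, and the theorem holds trivially with $\kappa=\delta$.

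Normality would in any case give $\cf^{M_U}(\id_U)=\delta<j(\kappa)$ directly, so your contradiction never uses the hypothesis. What is missing is any route from $\cf^{M_U}(\id_U)\geq j(\kappa)$ to a contradiction.

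\textbf{Stage 3 and the final step.} These aim at the wrong target. The statement says that $\mathscr K_\delta$ witnesses ${<}\delta$-supercompactness and, separately, that $\kappa$ is $\delta$-supercompact. It does not assert $j_U[\delta]\in M_U$. That stronger claim, like your intermediate claim that each $\mathscr K_\lambda$ is a $\lambda$-supercompactness measure, fails whenever $\kappa<\delta$ and every normal fine measure on $P_\kappa(\delta)$ has some normal measure $D$ on $\delta$ in its ultrapower. To see this, suppose $j_U[\delta]\in M_U$.
\begin{itemize}
\item Then $\cf^{M_U}(\id_U)=\delta$, so by \cref{lma:min_weakly_normal}, $\delta$ carries no uniform measure in $M_U$.
\item On the other hand, let $\mathcal U$ be the normal fine measure derived from $j_U$ using $j_U[\delta]$. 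Then $j_U=k\circ j_{\mathcal U}$ with $\crit(k)>\delta$, so $k(D)=D\in M_U$, a contradiction.
\end{itemize}

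Your factoring argument in Stage 3 would also need $j_U=i\circ j_{\mathscr K_\lambda}$ with $i$ internal to $M_{\mathscr K_\lambda}$ and $i[j_{\mathscr K_\lambda}[\lambda]]\in M_U$. Neither follows from Ketonen minimality. Recovering $j[\delta]$ as the fixed points of a \emph{definable comparison map} restates the goal rather than arguing for it. The final clause needs its own argument, not a proof that $\mathscr K_\delta$ is itself a supercompactness measure; the statement deliberately separates it from what $\mathscr K_\delta$ witnesses.
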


We will analyze measures in Nairian models by connecting them to 
canonical measures in \(\HOD_s\).
\begin{defn}
    If \(s\in \Ord^\omega\) and \(\eta\) is a cardinal in \(\HOD_s\),
    then \(\mathscr K_\eta^s\) denotes \((\mathscr K_\eta)^{\HOD_s}\), if \((\mathscr K_\eta)^{\HOD_s}\) exists.
\end{defn}

The following lemma traces back to Ketonen \cite{Ketonen1972}.
\begin{lma}\label{lma:ketonen}
    Suppose \(s\in \Ord^\omega\), \(\kappa\) is a cardinal in \(\HOD_s\), and \(\eta\in(\kappa,\Theta)\) is regular in \(\HOD_s\). If \(\HOD_s\) has the \((\aleph_1,\kappa)\)-cover property at \(\eta\), then \((\kappa_{\mathscr K_\eta})^{\HOD_s} \leq \kappa\).
    \begin{proof}
        The \((\aleph_1,\kappa)\)-cover property at \(\eta\) implies \(\eta\) has uncountable cofinality, and hence \(\mathscr K^s_\eta\) exists. 
        Note that \(S^\eta_\omega \in \mathscr K_\eta^s\) by \cref{lma:min_weakly_normal}: by the ordinal definability of measures, if \(\alpha < \Theta\) and \(\cf^{\HOD_s}(\alpha)\) carries no uniform measure in \(\HOD_s\), then \(\cf(\alpha) = \omega\).
        The \((\aleph_1,\kappa)\)-cover property then implies
        \((S^\eta_{<\kappa})^{\HOD_s} \in \mathscr K_\eta^s\).
        
        Now let \(j : \HOD_s\to M\) be the ultrapower embedding associated to \(\mathscr K_\eta^s\).
        We must show \(\crit(j)\leq \kappa\). By \L o\'s's theorem, \(\id_{\mathscr K^s_\eta} \in \left(S^{j(\eta)}_{<j(\kappa)}\right)^{M}\), or in other words:
        \[\cf^M(\sup j[\eta]) < j(\kappa)\]
        Since \(\cf^{\HOD_s}(\sup j[\eta]) = \eta\),
        we must have \(j(\kappa) \geq \eta\), and hence \(\crit(j)\leq \kappa\).
    \end{proof}
\end{lma}

\subsection{Measurable cardinals in Nairian models}\label{sec:general-measurable}
    In this subsection, we assume:
\begin{itemize}
    \item \(\AD^+ + V = L(P(\mathbb R))\).
    \item \(\xi \leq \Theta\) is strongly inaccessible in \(N^+ = N_\xi^+\).
\end{itemize}
Let \[N = N_\xi\]
Our next few observations, until \cref{thm:monotonicity}, concern a fixed ordinal \(\delta\geq \Theta^N\) that is regular in \(N\).

The first of these lemmas allows us to apply \cref{thm:supercompact} to \(\delta\):
\begin{lma}\label{lma:seminormal}
    For all sequences \(s\in I_\xi\), \(\delta\) carries distinct seminormal measures
    in \(\HOD_s\).
    \begin{proof}
        Since \(\xi\) is a strong limit cardinal in \(N^+\), 
        \((2^\delta)^{\HOD_s} < \xi\). 
        In particular, every \(\omega\)-sequence of \(\OD_s\)-subsets of \(\delta\)
        belongs to \(N\).
        Let 
        \(S_0 = S^\delta_\omega\) and \(S_1 = \delta \setminus S_0\);
        since \(\delta\geq \omega_2\), both are stationary in \(N\).
        Let \(F\) be the club filter of \(\HOD_s\) and let \(F_0 = F|S_0\) and \(F_1 = F|S_1\). 
        Then \(F_0\) and \(F_1\) extend to countably complete filters
        in \(V\), and thus extend to measures \(U_0\) and \(U_1\) in \(V\).
        By Kunen's theorem, these measures are \(\OD\),
        and therefore \(U_0\cap \HOD_s\) and \(U_1\cap \HOD_s\) witness
        that in \(\HOD_s\), there are distinct seminormal measures
        on \(\delta\).
    \end{proof}
\end{lma}

\begin{prp}\label{prp:gamma_kappa_cov}
    Suppose \(s\in I_\xi\), 
    \(\bar \kappa = (\kappa_{\mathscr K_\delta^s})^{N}\), and
    \(\kappa = (\kappa_{\mathscr K_\delta^s})^{\HOD_s}\).
    \begin{enumerate}[(1)]
        \item \(\HOD_s\) has the \((\bar \kappa,\kappa)\)-cover property at
         \(\delta\) in \(N\).
        \item \(\mathscr K_\delta^s\) witnesses that \(\kappa\) is \(\delta\)-strongly compact
        and \({<}\delta\)-supercompact in \(\HOD_s\).
        \item \(\kappa\) is \(\delta\)-supercompact in \(\HOD_s\).
    \end{enumerate}
    \begin{proof}
        Since \(\delta\) carries distinct seminormal measures, 
        we can apply \cref{thm:supercompact} 
        to conclude that \(\mathscr K_\delta^s\) witnesses that \(\kappa\) is \(\delta\)-strongly compact
        and \({<}\delta\)-supercompact in \(\HOD_s\),
        and moreover \(\kappa\) is \(\delta\)-supercompact in \(\HOD_s\).
        Since \(\mathscr K_\delta^s\) witnesses that \(\kappa\) is \(\delta\)-strongly compact in \(\HOD_s\), there is a function \(p : \delta\to P_\kappa(\delta)\) in \(\HOD_s\)
        such that \(\mathcal U = p_*(\mathscr K_\delta^s)\) is a fine measure on \(P_\kappa(\delta)\).

        The \((\bar \kappa,\kappa)\)-cover property now follows.
        Fix \(\sigma \in P_{\bar \kappa}(\delta)\cap N\).
        For \(\alpha < \delta\), define
        \[A_\alpha = \left\{\tau\in \left(P_\kappa(\delta)\right)^{\HOD_s} : \alpha\in \tau\right\}\]
        so \(A_\alpha\in \mathcal U\) by fineness.
        Since \(\mathscr K_\delta^s\) is \(\bar \kappa\)-complete,
        \(\bigcap_{\alpha\in \sigma}A_\alpha\neq \emptyset\),
        and if \(\tau\) belongs to this intersection, then \(\tau\in \HOD_s\), 
        \(\sigma\subseteq \tau\)
        and \(|\tau|^{\HOD_s} < \kappa\).
    \end{proof}
\end{prp}

\begin{prp}\label{prp:ks_complete}
    For any \(s\in I_\xi\), 
    \(\mathscr K_\delta^s\) is \(\Theta^{N}\)-complete in \(\HOD_s\).
    \begin{proof}
        Otherwise, by \cref{prp:gamma_kappa_cov},
        for some \(\kappa < \Theta^{N}\),
        \(\HOD_s\) has the \((\aleph_1,\kappa)\)-cover property at \(\delta\).
        To get a contradiction, fix \(\gamma \in (\kappa,\Theta^{N})\)
        such that \(\gamma\to (\gamma)^{<\omega_1}\). 
        By a standard argument, there is an \(\omega\)-club of \(\alpha < \gamma\) such that
        \(\alpha\) is regular in \(\HOD_s\).\footnote{Let \(U\) be
        the \(\omega\)-club filter on \(\gamma\). The partition property
        implies that \(U\) is a normal measure,
        and hence \(U\cap \HOD_s\) is a normal measure in \(\HOD_s\).
        Since \(\HOD_s\) satisfies \(\ZFC\), \(\Reg^{\HOD_s}\cap \gamma\in U\).}
        This contradicts that \(\HOD_s\) has the 
         \((\aleph_1,\kappa)\)-cover property at \(\gamma\).
    \end{proof}
\end{prp}

\begin{prp}\label{prp:W_to_K}
    Suppose that in \(N\), \(W\) is a weakly normal measure
    on \(\delta\) that concentrates on \(S^\delta_\omega\).
    Then for all \(s\in I_\xi\),
    \(W\cap \HOD_s = \mathscr K^s_\delta\).
    \begin{proof}
        This follows immediately from \cref{lma:min_weakly_normal}
        since \(W\cap \HOD_s\) is weakly normal in \(\HOD_s\)
        and by the \(\HOD_s\)-ultrafilter
        hypothesis (\cref{thm:hod_uf}), \(W\cap \HOD_s\) 
        concentrates on ordinals \(\alpha\) such that \(\mathscr K_{\bar \alpha}^s\)
        does not exist where \(\bar \alpha = \cf^{\HOD_s}(\alpha)\).
    \end{proof}
\end{prp}

\begin{cor}\label{prp:wd_generation}
    Suppose that in \(N\), \(W\) is a weakly normal measure
    on \(\delta\) that concentrates on \(S^\delta_\omega\).
    Then \(W = \bigcup_{s\in I_\xi}\mathscr K^s_\delta\)
    and \(W\) is \(\Theta^{N}\)-complete.
    \begin{proof}
        This follows from \cref{prp:ks_complete} and \cref{prp:W_to_K} 
        since every subset of \(\delta\) in \(N\) belongs
        to \(\HOD_s\) for some \(s\in I_\xi\).
    \end{proof}
\end{cor}

\begin{cor}\label{thm:unique_complete}
    In \(N\), there is a unique weakly 
    normal measure concentrating on \(S^\delta_\omega\).
    \begin{proof}
        Existence follows from \cref{lma:ad_weakly_normal}:
        since \(\delta\) is regular in \(N\),
        \(\cf(\delta) > \omega\); therefore \cref{lma:ad_weakly_normal} provides a weakly normal measure \(W\) on \(\delta\) concentrating on \(S^\delta_\omega\); since \(W\) is \(\OD\), \(W\cap N\in N\).
        Uniqueness is immediate from \cref{prp:wd_generation}.
    \end{proof}
\end{cor}

\begin{defn}\label{defn:w_delta}
    Let \(W_\delta\) denote the unique weakly normal measure of \(N\)  with \(S^\delta_\omega\in W_\delta\),
    and let \(\kappa_\delta\) denote the completeness of \(W_\delta\) as computed in \(N\).
\end{defn}
The proof of \cref{thm:unique_complete} allows us to compute the completeness of \(W_\delta\):
\begin{cor}\label{cor:ua_completeness}
    \(\kappa_\delta = \min\left\{(\kappa_{\mathscr K_\delta^s})^{\HOD_s}:s\in I_\xi\right\}\).\qed
\end{cor}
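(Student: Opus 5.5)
We have to show that \(\kappa_\delta\), the completeness of \(W_\delta\) computed in \(N\), equals the minimum over \(s\in I_\xi\) of \((\kappa_{\mathscr K^s_\delta})^{\HOD_s}\). The plan is to prove the two inequalities separately. Both directions use the description \(W_\delta = \bigcup_{s\in I_\xi}\mathscr K^s_\delta\) from \cref{prp:wd_generation}, together with the fact that \(N\subseteq\bigcup_{s\in I_\xi}\HOD_s\).

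For \(\kappa_\delta \leq (\kappa_{\mathscr K_\delta^s})^{\HOD_s}\), fix \(s\in I_\xi\) and put \(\kappa = (\kappa_{\mathscr K^s_\delta})^{\HOD_s}\). In \(\HOD_s\) there is a sequence \(\langle A_\alpha : \alpha<\kappa\rangle\) of sets in \(\mathscr K^s_\delta\) with empty intersection. Each \(A_\alpha\) lies in \(W_\delta\) by \cref{prp:W_to_K}. The sequence itself belongs to \(N\), because \((2^\delta)^{\HOD_s}<\xi\) and \(\HOD_s\cap V_\xi\subseteq N\). Hence \(W_\delta\) is not \(\kappa^+\)-complete in \(N\). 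Since \(\kappa\) is a cardinal of \(\HOD_s\), hence at least its \(N\)-cardinality, this gives \(\kappa_\delta\leq\kappa\).

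For the reverse inequality, in \(N\) fix \(\gamma=\kappa_\delta\) and a sequence \(\vec A=\langle A_\alpha:\alpha<\gamma\rangle\subseteq W_\delta\) with empty intersection. Since \(\vec A\) is coded by a subset of an ordinal below \(\xi\) lying in \(N\), there is a single \(s\in I_\xi\) with \(\vec A\in\HOD_s\). By \cref{prp:W_to_K}, \(W_\delta\cap\HOD_s=\mathscr K^s_\delta\), so every \(A_\alpha\in\mathscr K^s_\delta\). This sequence witnesses, inside \(\HOD_s\), that \(\mathscr K^s_\delta\) is not \(|\gamma|^{\HOD_s\,+}\)-complete. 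Because \(\gamma\) is a cardinal of \(N\) and \(\HOD_s\subseteq N^+\) contains fewer bijections, \(\gamma\) is also a cardinal of \(\HOD_s\). Therefore \((\kappa_{\mathscr K^s_\delta})^{\HOD_s}\leq\gamma\), and combining with the first direction, the minimum is attained and equals \(\kappa_\delta\).

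The main step to check is this last one, namely that a single \(\HOD_s\) captures the whole witnessing sequence. This needs that every element of \(N\), not just every subset of \(\delta\), is \(\OD\) from some \(s\in I_\xi\), which holds by the definition of \(N^+=\HOD_{I_\xi}\). The remaining facts used are routine: cardinals of \(N\) remain cardinals in the smaller model \(\HOD_s\), and \(\HOD_s\cap V_\xi\subseteq N\), which gives the transfer in the first direction.
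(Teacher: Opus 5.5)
Your proof is correct and is essentially the argument the paper leaves implicit after \cref{prp:wd_generation}: \(W_\delta\cap\HOD_s=\mathscr K^s_\delta\), every witnessing sequence in \(N\) lies in a single \(\HOD_s\), and conversely \(\HOD_s\cap V_\xi\subseteq N\), so the two completeness computations agree. Your appeal to \((2^\delta)^{\HOD_s}<\xi\) is harmless but unnecessary, since a \(\kappa\)-sequence of subsets of \(\delta\) already has rank below \(\xi\).
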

Before continuing to our general results,
we state some immediate corollaries under anti-inner model hypotheses.
\begin{prp}\label{prp:anti_inner}
    Assume the following holds in \(N\): for all cardinals \(\lambda\), there is no inner model \(M\) of \(\ZFC + \UA\) in which some \(\kappa < \lambda\) is \(\lambda\)-supercompact and such that the \(M\)-ultrafilter hypothesis holds.
    Then in \(N\), every regular cardinal is measurable.
    \begin{proof}
        This follows from \cref{prp:gamma_kappa_cov} and \cref{cor:ua_completeness}.
    \end{proof}
\end{prp}
It is unclear that it is consistent for the anti-inner model hypothesis of \cref{prp:anti_inner}
to hold in a Nairian model \(N^+_\xi\) where \(\xi\) is strongly
inaccessible. The question that arises is the following: 
\begin{qst}
    Which of the following large cardinal hypotheses has higher consistency strength over ZFC? 
    \begin{itemize}
        \item There is a cardinal \(\kappa\) that is \(\kappa^+\)-supercompact.
        \item There is a Woodin limit of Woodin cardinals.
    \end{itemize}
\end{qst}
Clearly if \(\kappa\) is \(2^\kappa\)-supercompact, then \(\kappa\) is a Woodin limit of Woodin cardinals, so we state the following proposition, whose anti-large cardinal hypothesis \textit{is} consistent: 
\begin{prp}\label{prp:anti_inner2}
    Assume that in \(N\) there is no inner model of \(\ZFC\) containing a cardinal \(\kappa\) that is \(2^{2^\kappa}\)-supercompact.
    Then in \(N\), every regular cardinal \(\delta\) carries a measure that is \(\lambda\)-complete for all cardinals \(\lambda < \delta\).
    \begin{proof}
        The proof is the same as that of \cref{prp:anti_inner}
        using the fact that under \(\UA\), if \(\kappa\) is \(\kappa^{++}\)-supercompact, then \(2^{2^\kappa} = \kappa^{++}\).
    \end{proof}
\end{prp} 
We now continue to the theorems that require no anti-inner model hypothesis.

\begin{prp}\label{prp:kappa_cov}
    Let \(\kappa = \kappa_\delta\), and suppose \(s\in I_\xi\) is such that \((\kappa_{\mathscr K^s_\delta})^{\HOD_s} = \kappa\). Then
    \(\HOD_s\) has the \(\kappa\)-cover and approximation properties at \(\delta\) in \(N\). Moreover, 
    every \(S\in P(\delta)\cap N\) is in
    \(\HOD_{s,x}\) for some \(x\in P(\kappa)\cap N\).
    \begin{proof}
        The \(\kappa\)-cover property
        follows immediately from \cref{prp:gamma_kappa_cov}.
        For the approximation property, 
        fix \(\mathcal A = \langle A_\alpha : \alpha < \delta\rangle\in \HOD_s\) such that
        \(\HOD_s\) satisfies that \(\mathcal A\) is a \(\kappa\)-independent family 
        of subsets of \(\delta\). 
        Suppose \(T\in P(\delta)\cap N\) is \(\kappa\)-approximated by \(\HOD_s\).
        Then the basis
        \[\mathcal B = \{A_\alpha : \alpha \in T\} \cup \{\delta\setminus A_\alpha : \alpha\notin T\}\]
        extends to a 
        \(\kappa\)-complete filter \(F\) in \(N\). 
        In particular, \(F\) 
        is countably complete in \(V\), and thus \(F\) extends to a measure \(U\) in \(V\).
        By Kunen's theorem, \(U\) is \(\OD\), and so \(U\cap \HOD_s\in \HOD_s\).
        Thus
        \(T = \{\alpha < \delta : A_\alpha\in U\}\in \HOD_s\).

        To show every \(S\in P(\delta)\cap N\) is in
        \(\HOD_{s,x}\) for some \(x\in P(\kappa)\cap N\),
        fix \(t\in I_\xi\) such that \(s\) and \(S\) belong to \(\HOD_t\).
        Since \(\mathscr K^s_\delta\subseteq \mathscr K^t_\delta\),
        \((\kappa_{\mathscr K^t_\delta})^{\HOD_t} = \kappa\).
        In particular, \(\kappa\) is \(\delta\)-strongly compact in 
        \(\HOD_t\). Let \(x\subseteq \kappa\) code a well-order of \(P_\kappa(\kappa)\cap \HOD_t\). 
        Note that the models \(\HOD_{s,x}\) and \(\HOD_t\) compute \(P_\kappa(\delta)\) the same way:
        of course, \(\HOD_{s,x}\subseteq \HOD_t\), and in the other direction,
        if \(\sigma\in (P_\kappa(\delta))^{\HOD_t}\), then \(\sigma\) is covered by \(\tau\in (P_\kappa(\delta))^{\HOD_s}\) by the \(\kappa\)-cover property, \(P(\tau)\cap \HOD_t\subseteq \HOD_{s,x}\) since \(P_\kappa(\kappa)\cap \HOD_t\subseteq \HOD_{s,x}\). The same argument shows that
        \[(\delta^{<\kappa})^{\HOD_{s,x}} \leq (\delta^{<\kappa})^{\HOD_s}\cdot (\kappa^{<\kappa})^{\HOD_{s,x}} = \delta\cdot \kappa = \delta\]
        
        This cardinal arithmetic yields \(\mathcal A = \langle A_\alpha : \alpha < \delta\rangle\in \HOD_{s,x}\) such that
        \(\HOD_{s,x}\) satisfies that \(\mathcal A\) is a \(\kappa\)-independent family 
        of subsets of \(\delta\). Since \(\HOD_{s,x}\) and \(\HOD_t\) 
        have the same \(P_\kappa(\delta)\),
        \(\mathcal A\) is also \(\kappa\)-independent in \(\HOD_t\).
        By \cref{prp:gamma_kappa_cov}, 
        \(\kappa\) is \(\delta\)-strongly compact in \(\HOD_t\).
        Repeating the argument of the previous paragraph, we can find a \(\kappa\)-complete measure
        \(U\) of \(\HOD_t\) such that 
        \(S = \{\alpha < \delta : A_\alpha\in U\}\), and using the \(\HOD_t\)-ultrafilter hypothesis (\cref{thm:hod_uf}), we obtain that \(U\cap \HOD_{s,x}\in \HOD_{s,x}\), and therefore \(S\in \HOD_{s,x}\).
    \end{proof}
\end{prp}

\begin{thm}
\label{thm:monotonicity}
   If \(\delta_0 \leq \delta_1 < \xi\) are regular cardinals in \(N\), then \(\kappa_{\delta_0}\leq \kappa_{\delta_1}\).
    \begin{proof}
        By the Steel--Woodin theorem \cite[Theorem 2.18]{SteelOrdinalDefinability}, we may assume \(\delta_0 \geq \Theta^{N}\).
        By \cref{cor:ua_completeness},
        it suffices to show that for all \(s\in I_\xi\),
        \(\mathscr K_{\delta_1}^s\) is \(\kappa\)-complete in \(\HOD_s\) where
        \(\kappa = \kappa_{\delta_0}\). 

        Fix \(s\in I_\xi\). A consequence of \cite[Lemma 7.4.49]{UABook} is that under \(\ZFC + \UA\), if \(\gamma_0 \leq \gamma_1\) are regular non-isolated Fr\'echet cardinals, then \(\kappa_{\mathscr K_{\gamma_0}}\leq \kappa_{\mathscr K_{\gamma_1}}\).
        By \cref{lma:seminormal},
        both \(\delta_0\) and \(\delta_1\) carry distinct 
        seminormal measures in \(\HOD_s\), and so
        by \cite[Theorem 8.4.20]{UABook}, 
        they are regular non-isolated Fr\'echet cardinals in \(\HOD_s\).
        By \cref{prp:W_to_K}, it follows that
        \(\kappa \leq \kappa_{\mathscr K^s_{\delta_0}}\leq \kappa_{\mathscr K^s_{\delta_1}}\).
    \end{proof}
\end{thm}

We now prove our main theorem on measurable cardinals in Nairian models.

\begin{thm}\label{thm:main}
    Let \(\mathcal H = V_\xi^\HOD\). Inside \(N\), suppose \(\eta\) is a regular cardinal satisfying: 
    \begin{enumerate}[(1)]
        \item \label{item:s_reflection} 
        For all \(\alpha < \eta\), \(P(\alpha) \subseteq \mathcal H({\eta^\omega})\). 
        \item \label{item:small_gen}  For all \(\alpha < \eta\), \(\Theta(\alpha^\omega)\leq \eta\).
    \end{enumerate}
    Then in \(N\), either \(\kappa_\eta = \eta\) or \(\mathcal H\) correctly computes all finite successors of \(\eta^{++}\).
    \begin{proof}
        By \ref{item:small_gen}, \(\Theta^{N} \leq \eta < \xi\),
        which puts us in a position to apply the previous results of this section 
        with \(\delta = \eta\).

        Assume towards a contradiction that the conclusion fails. 
        In other words, \(\kappa < \eta\) where \(\kappa = \kappa_\eta\).
        Fix \(s\in I_\xi\) such that \((\kappa_{\mathscr K_\eta^s})^{\HOD_s} = \kappa\). 

        By \cref{prp:kappa_cov},
        \(\HOD_s\) has the 
        \(\kappa\)-cover property at \(\delta\) in \(N\) and every \(S\in P(\eta)\cap N\) is in
        \(\HOD_{s,x}\) for some \(x\in P(\kappa)\cap N\).
        Combining \ref{item:s_reflection} and \ref{item:small_gen}, for any \(x\in P(\kappa)\cap N\),
        \(\HOD_{s,x}\) is an \(\eta\)-cc generic extension of \(\HOD_s\). Since every
        subset of \(\eta\) in \(N\) belongs to some such \(\HOD_{s,x}\), it follows that
        \((\eta^+)^{\HOD_s} = (\eta^+)^{N}\).

        Since \(\HOD_s\) has
        the \(\kappa\)-cover property at \(\eta\) and \((\eta^+)^{\HOD_s} = (\eta^+)^{N}\), 
        \(\HOD_s\) has the \(\kappa\)-cover property at \((\eta^+)^{N}\).
        Let \(\gamma = (\eta^+)^{N}\).
        Since \(\gamma\) is a cardinal in \(N\) that is 
        seen to be a successor cardinal by an inner model of choice,
        \(\gamma\) is regular in \(N\). 
        Applying \cref{lma:ketonen} and \cref{thm:monotonicity},
        \[\kappa = (\kappa_{\mathscr K^s_\eta})^{\HOD_s} \leq (\kappa_{\mathscr K^s_\gamma})^{\HOD_s} \leq \kappa\] 
        Thus we can apply \cref{prp:kappa_cov}
        again, to obtain that every \(S\in P(\gamma)\cap N\) is in
        \(\HOD_{s,x}\) for some \(x\in P(\kappa)\cap N\), and thus
        \((\gamma^+)^{\HOD_s} = (\gamma^+)^{N}\).

        Iterating this argument, we have that \(\HOD_s\) correctly computes \((\eta^{+n})^{N}\)
        for all \(n < \omega\).
        Let \(\lambda = (\eta^{+\omega})^{N}\). 
        By \cref{prp:gamma_kappa_cov}, \(\kappa\) is \({<}\lambda\)-supercompact in \(\HOD_s\) and
        \(\HOD_s\) has the \(\kappa\)-cover and approximation properties
        below \(\lambda\). Indeed, this holds in \(\HOD_{s,u}\) for all \(u\in I_\xi\).

        We claim that \(\Theta^{N}(P(\eta)) \leq (\gamma^+)^{N}\).
        Note first that since UA implies GCH in regions of supercompactness, 
        \((2^\eta)^{\HOD_{s,u}} = \gamma\) for all \(u\in I_\xi\).
        For each \(u\in \eta^\omega\), 
        let \(\langle S_\alpha^u : \alpha < \gamma\rangle\) be an \(\OD_{s,u}\)-enumeration of 
        \(P(\eta)\cap \HOD_{s,u}\). 
        Note that each \(S\subseteq \eta\) is of the form \(S_\alpha^u\) for some \(u\in \eta^\omega\)
        and \(\alpha < \gamma\),
        since \(S\in \HOD_{s,x}\) for some \(x\in P(\kappa)\cap N\) and
        \(x\in \HOD_u\) for some \(u\in \eta^\omega\) by \ref{item:s_reflection}.

        Suppose \(\nu < \Theta^{N}(P(\eta))\),
        and let us show \(\nu < (\gamma^+)^{N}\). 
        Suppose \(f : P(\eta)\to \nu\) is a surjection.
        Define \(g : \eta^\omega\times \gamma\to \nu\) by
        setting \(g(u,\alpha) = f(S^u_\alpha)\). Then \(g\) is also a surjection.
        Note however that for each fixed \(\alpha < \gamma\),
        if \(A_\alpha = \{g(u,\alpha) : u\in \eta^\omega\}\), then
        \(|A_\alpha|\leq \eta\) by \ref{item:small_gen}.
        (Here we use that \(\Theta^{N}(\eta^\omega) = ({\eta^+})^{N}\) as a consequence
        of \ref{item:small_gen}.)
        Since \(\nu = \bigcup_{\alpha < \gamma} A_\alpha\),
        \(\nu < (\gamma^+)^{N}\). 

        Fix \(y\subseteq \eta\) such that \(\HOD_s\cap P_\kappa(\eta)\) is
        coded by \(y\). 
        Since \(\Theta^{N}(P(\eta)) \leq (\gamma^+)^{N}\),
        \(\HOD_y\) is contained in a \((\gamma^+)^{N}\)-cc extension of
        \(\HOD\). Moreover, applying the Hamkins uniqueness theorem
        \cite[Theorem 4.7]{Goldberg2024StronglyCompactOD} at each finite successor of
        \(\eta\), we see that \(Y = \Pbd(\lambda)\cap \HOD_s\) is \(\OD_y\).
        In particular, \(\HOD_y^{L(Y)}\subseteq \HOD_y\). 
        Note that \(L(Y)\) is a model of \(\ZFC + \UA\) in which 
        \(\kappa\) is \({<}\lambda\)-supercompact; the fact that
        \(L(Y)\vDash \AC\) follows from \cite[Theorem 6.2.19]{UABook} applied in \(\HOD_s\), which implies that there is a well-order of \(Y\) that is definable from parameters over \(Y\).
        By \cite[Theorem 6.2.19]{UABook}, it follows that 
        \(Y\subseteq \HOD_y^{L(Y)}\subseteq \HOD_y\).
        This implies that \(Y\) is contained in a \((\gamma^+)^{N}\)-cc extension
        of \(\HOD\). It follows that \(\HOD\) correctly computes all 
        finite successors of \((\gamma^+)^{N} = (\eta^{++})^{N}\).
    \end{proof}
\end{thm}

Note that if \(\HOD\) correctly computes all finite successors of \((\eta^{++})^{N}\), then 
there is an ordinal \(\kappa \leq (\eta^{++})^{N}\) that 
is \((\eta^{+n})^{N}\)-supercompact in \(\HOD\) 
for all \(n < \omega\). 
In particular, this is impossible under HPC.

\subsection{Measurable cardinals in minimal Nairian models}
\label{sec:minimal-hierarchy}
Throughout this subsection, assume
\[
    \ZF+\AD_{\mathbb R}+\Theta_\textnormal{reg}
    +\textnormal{HPC}+V=L(P(\mathbb R)).
\]
Fix \(\xi < \Theta\) such that \(\xi\) is strongly
inaccessible in \(N^+ = N_\xi^+\).
We also assume that
\(N = N_\xi\)
is a minimal Nairian model in the sense of
\cite[Definition 1.1]{FailureSquare}.
We let \(\mathcal H\) denote \(V_\xi^{\HOD}\) as computed in
the ambient determinacy model.

\begin{lma}\label{lma:power_correct}
    In \(N\), if \(\alpha\geq \omega\)
    and \(\eta = \Theta(V_{\alpha+1})\),
    then for all \(\beta < \eta\), \(P(\beta) \subseteq \mathcal H(\eta^\omega)\).
    \begin{proof}
        If \(\alpha = \omega\), the lemma follows from
        \(\AD_\mathbb R + \DC\). 
        Assume \(\alpha > \omega\).
        Fix \(\beta\) such that \(\Theta \leq \beta < \eta\), and let \(\bar \eta\) be the least strong of \(\mathcal H\) above \(\beta\). By \cite[Theorem 19.1]{FailureSquare},
        \(\bar \eta = \Theta(\beta^\omega)\leq \eta\). By \cite[Theorem 10.13]{NairianModels},
        \(P(\beta)\subseteq L(\mathcal H|\bar \eta,I_{\bar \eta})\subseteq \mathcal H(\eta^\omega)\).
    \end{proof}
\end{lma}

\begin{lma}\label{lma:V_vs_I}
    In \(N\), for all \(\alpha\geq \omega\),
    \(|V_{\alpha}| =^* |I_\lambda|\)
    where \(\lambda = \sup_{\beta < \alpha} \Theta(V_\beta)\).
    \begin{proof}
        Work in \(N\). The nontrivial direction is
        \(|V_\alpha| \leq^* |I_\lambda|\).
        We produce the required surjections uniformly in \(\alpha\) by induction. 
        This uniformity handles the limit steps.
        For the successor step, 
        suppose \(|V_{\alpha}| \leq^* |I_\lambda|\), and let us show that \(|V_{\alpha+1}| \leq^* |\Theta(V_\alpha)^\omega|\).

        First, assume \(\cf(\alpha) = \omega\).
        Then \(|V_{\alpha+1}| =^* |V_\alpha^\omega| \leq^* |(I_\lambda)^\omega| =^* |\lambda^\omega|\leq^* |\Theta(V_\alpha)^\omega|\). 
        
        Otherwise, let \(\theta = \Theta(\lambda^\omega)\), and note that \(\theta \leq \Theta(V_\alpha)\) since \(|(V_\alpha)^\omega| = |V_\alpha|\).
        We use that 
        \(P(\lambda^\omega)\subseteq L_\theta(\mathcal H|\theta,I_\theta)\) by
        \cite[Theorems 10.6 and 10.13]{NairianModels}.
        Since every subset of \(V_\alpha\) can be coded by a subset of \(I_\lambda\),
        it follows that \(V_{\alpha+1}\subseteq L_\theta(\mathcal H|\theta,I_\theta)\).
        Thus \(|V_{\alpha+1}| \leq^* |I_\theta|\leq^* |I_{\Theta(V_\alpha)}|\), as desired.
    \end{proof}
\end{lma}

We can now connect the cardinals
\(\Theta(V_\alpha)\) to the strong cardinals of \(\mathcal H\):
\begin{prp}\label{prp:strongs}
    The set of strong cardinals of \(\mathcal H\) is equal to
    \[S = \{\Theta(V_\alpha) : \alpha \geq \omega\text{ and }\alpha\in \xi\setminus S^\xi_\omega\}\]
    \begin{proof}
        Fix \(\alpha\geq \omega\) and let
        \(\lambda = \sup_{\beta < \alpha} \Theta(V_\beta)\).
        We will show that the least strong of \(\mathcal H\) above \(\lambda\) is \(\min(S\setminus (\lambda+1))\).
        By \cite[Theorem 19.1]{FailureSquare}, the least strong of \(\mathcal H\) above \(\lambda\) is \(\Theta(\lambda^\omega)\).
        If \(\cf(\alpha) = \omega\), \cref{lma:V_vs_I} implies
        \(\Theta(V_{\alpha+1}) = \Theta(\lambda^\omega)\),
        and clearly \(\Theta(V_{\alpha+1}) = \min(S\setminus (\lambda+1))\). Otherwise,
        \(\Theta(V_\alpha) = \Theta(\lambda^\omega)\),
        and clearly \(\Theta(V_{\alpha}) = \min(S\setminus (\lambda+1))\).
    \end{proof}
\end{prp}

\begin{thm}\label{thm:theta_meas}
    In \(N\), if \(\alpha\geq \omega\) and \(\kappa = \Theta(V_\alpha)\), then 
    \(W_{\kappa}\) is \(\kappa\)-complete, so
    \(\kappa\) is measurable.
    \begin{proof}
        We first consider the case that \(\alpha\) is a successor ordinal. Let \(\lambda = \Theta(V_{\alpha-1})\).
        Since \(\lambda\) has uncountable cofinality, \cref{lma:V_vs_I} implies \(\Theta(V_{\alpha}) = \Theta(\lambda^\omega)\).
        Since \(\Theta(V_{\alpha})\) is a strong cardinal of \(\mathcal H\), it
        is regular by \cite[Theorem 10.20]{NairianModels}.

        Apply \cref{thm:main} in \(N\)
        to \(\eta = \kappa\).
        We check the three criteria from that theorem.
        It is easy to see that for all \(\beta < \kappa\),
        \(\Theta(\beta^\omega)\leq \kappa\).
        By \cref{lma:power_correct},
        for all \(\beta < \kappa\), \(P(\beta) \subseteq \mathcal H(\kappa^\omega)\).
        Finally, \(\HOD\) does not correctly compute any successor cardinals by our HPC assumption \cite{Steel2022Comparison}.

        For the limit case, we use the following claim:
\begin{clm}\label{lma:limit-successor-regular}
    In \(N\), for every limit ordinal \(\alpha\), \[\Theta(V_\alpha) = \left(\sup_{\beta < \alpha} \Theta(V_\beta)\right)^+\] and \(\Theta(V_\alpha)\) 
    is regular.
\end{clm}
\begin{proof}
    It is a consequence of ZF that \(\Theta(V_\alpha) = (\sup_{\beta < \alpha} \Theta(V_\beta))^+\);
    see \cite[Lemma 2.1]{GoldbergCCCP}.
    Work in \(N\). We may assume \(\alpha > \omega\).
    Let \(\lambda = \sup_{\beta < \alpha} \Theta(V_\beta)\).
    Since \(N\) is a minimal Nairian model, 
    it contains no strongly inaccessible cardinals, 
    and so \(\lambda\) is singular.

    Assume towards a contradiction that \(\lambda^+\) is singular, say of cofinality \(\mu < \lambda\).
    Let \(\nu\) be a sufficiently large ordinal.
    By Blue--Larson--Sargsyan
    \cite[Theorem 1.17 and Definitions 6.2 and 8.6]{FailureSquare},
    there is an elementary substructure
    \(X\preceq V_\nu\) such that \(\mu\subseteq X\),
    \(\lambda \in X\),
    and \(|X| \leq^* |V_\beta|\) for some \(\beta < \alpha\). (We use here that one can bound the sizes of levels of the \(\mathsf{N}\)-hierarchy from \cite{FailureSquare} by levels of the  cumulative hierarchy; for example, one can prove by induction that \(|\mathsf{N_\gamma}|\leq^* |V_{\omega+\gamma+2}|\).)

    We now use the argument of \cite[Lemma 3]{UsubaChoicelessLS}.
    Since \(\mu\subseteq X\), \(X\cap\lambda^+\) is cofinal in \(\lambda^+\).
    Consider the partial map \(e:X\times\lambda\to\lambda^+\)
    given by \(e(h,\alpha) = h(\alpha)\) if \(h : \lambda \to \lambda^+\)
    is a function in \(X\).
    For each \(\nu<\lambda\), let
    \(R_\nu=\{e(h,\nu):h\in X\}\),
    so \(\lambda^+ = \bigcup_{\nu < \lambda}R_\nu\) and \(\ot(R_\nu) < \lambda\)
    since \(\Theta(X) \leq \lambda\).
    Let \(\varphi_\nu : \ot(R_\nu)\to R_\nu\) be the increasing enumeration,
    and define a partial function \(\Phi: \lambda\times\lambda\to \lambda^+\)
    by \(\Phi(\nu,\alpha) = \varphi_\nu(\alpha)\).
    Then \(\Phi\) surjects \(\lambda\times \lambda\) onto \(\lambda^+\), a contradiction.
\end{proof}

    We now consider the case that \(\alpha\) is a limit ordinal.
        Let \(\lambda = \sup_{\beta < \alpha} \Theta(V_\beta)\). 
        By \cref{lma:limit-successor-regular},
        \(\Theta(V_\alpha) = \lambda^+\) is regular.
        By \cref{thm:monotonicity} and the successor case,
        \(\lambda \leq \kappa_{\lambda^+} \leq \lambda^+\).
        The minimality of \(N\) implies that \(\lambda\) is singular.
        Since \(\kappa_{\lambda^+}\) is regular, \(\kappa_{\lambda^+} = \lambda^+\).
    \end{proof}
\end{thm}

\begin{cor}\label{cor:consistency}
    The theory ``\textnormal{ZF + }there are
    infinitely many consecutive measurable cardinals'' is consistent relative to a Woodin limit of Woodin cardinals.
    \begin{proof}
        Blue--Larson--Sargsyan 
        construct minimal Nairian models
        starting with a Woodin limit of Woodin cardinals. In these models,
        their calculations \cite[Theorem 1.3(1)--(3)]{FailureSquare},
        together with \cref{lma:V_vs_I}, imply that
        \(\Theta(V_{\omega+n+1}) = \Theta^{+n}\) for all \(n < \omega\).
        By \cref{thm:theta_meas}, this yields
        infinitely many consecutive measurable cardinals.
    \end{proof}
\end{cor}

We now show
minimal Nairian models satisfy \(\aleph(V_{\alpha+1})=\Theta(V_\alpha)\) for all infinite ordinals \(\alpha\). In other words, there is no
\(\Theta(V_\alpha)\)-sequence of distinct subsets of \(V_\alpha\).  

We will use the following abstract forcing lemma \cite[Lemma 5.2]{HamkinsWoodin2005NecessaryMaximality}, due to Jech \cite[p.~5]{Jech1974SimpleBooleanAlgebras}.\footnote
    {The suggestion to use this result to bound the size of the Vop\v{e}nka algebra is due to GPT-6 Astra.}

\begin{lma}[ZFC]\label{lma:small-completely-generated-algebra}
    Suppose \(\kappa\) is weakly compact and \(\mathbb B\) is a
    \(\kappa\)-cc complete Boolean algebra completely generated by
    fewer than \(\kappa\) elements. Then \(|\mathbb B|<\kappa\).\qed
\end{lma}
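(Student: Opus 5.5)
The plan is a rank-hierarchy argument: show \(|\mathbb B|\leq\kappa\) outright, then use weak compactness (\(\Pi^1_1\)-indescribability) to rule out \(|\mathbb B|=\kappa\).

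\textbf{Setting up the hierarchy.} Fix generators \(G\subseteq\mathbb B\) with \(|G|=\mu<\kappa\). Define \(\mathbb B_0\) to be the subalgebra generated by \(G\). For successors, let \(\mathbb B_{\alpha+1}\) be the subalgebra generated by \(\{\sup_{\mathbb B} A : A\subseteq \mathbb B_\alpha,\ |A|<\kappa\}\). At limits, take unions. Since \(\kappa\) is strongly inaccessible, induction gives \(|\mathbb B_\alpha|<\kappa\) for every \(\alpha<\kappa\), because \(|\mathbb B_{\alpha+1}|\leq |\mathbb B_\alpha|^{<\kappa}<\kappa\).

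\textbf{Step 1: \(|\mathbb B|\leq\kappa\).} Let \(\mathbb B_\kappa=\bigcup_{\alpha<\kappa}\mathbb B_\alpha\). By regularity of \(\kappa\), \(\mathbb B_\kappa\) is closed under suprema (computed in \(\mathbb B\)) of subsets of size \(<\kappa\). The \(\kappa\)-cc gives the remaining closure. For arbitrary \(A\subseteq\mathbb B\), take a maximal antichain \(C\) in the set \(\{c : \exists a\in A\,(c\leq a)\}\) such that each \(c\in C\) lies below a chosen \(a_c\in A\). Then \(|C|<\kappa\), and \(\sup A=\sup\{a_c : c\in C\}\). So every supremum equals the supremum of a subset of size \(<\kappa\). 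Hence \(\mathbb B_\kappa\) is a complete subalgebra containing \(G\), so \(\mathbb B_\kappa=\mathbb B\) and \(|\mathbb B|\leq\kappa\).

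\textbf{Step 2: excluding \(|\mathbb B|=\kappa\).} Suppose \(|\mathbb B|=\kappa\), and code everything on \(\kappa\). That is, identify \(\mathbb B\) with \(\kappa\) so that \(G\subseteq\mu\), and carry the order, the complement operation, and the sequence \(\langle\mathbb B_\alpha:\alpha<\kappa\rangle\) as predicates on \(V_\kappa\). The club of \(\lambda<\kappa\) with \(\mathbb B\cap\lambda=\bigcup_{\alpha<\lambda}\mathbb B_\alpha\) is first order in these predicates. Step 1 shows the following \(\Pi^1_1\) statement holds over \((V_\kappa,\in,\mathbb B,\ldots)\): for every \(A\subseteq\mathbb B\) there is a bounded \(A'\subseteq A\) such that every upper bound of \(A'\) is an upper bound of \(A\). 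Apply \(\Pi^1_1\)-indescribability to reflect this statement together with the club condition to some inaccessible \(\lambda<\kappa\) with \(\mu<\lambda\).

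\textbf{Step 3: deriving the contradiction.} Let \(\mathbb B^\lambda=\mathbb B\cap\lambda=\bigcup_{\alpha<\lambda}\mathbb B_\alpha\). I then check that \(\mathbb B^\lambda\) is a complete subalgebra of \(\mathbb B\). First, sets of size \(<\lambda\) have their \(\mathbb B\)-supremum in some \(\mathbb B_{\alpha+1}\) with \(\alpha<\lambda\). Second, for arbitrary \(A\subseteq\mathbb B^\lambda\), the reflected statement yields \(A'\subseteq A\) of size \(<\lambda\) whose upper bounds within \(\mathbb B^\lambda\) coincide with those of \(A\). Now \(\sup_{\mathbb B}A'\) lies in \(\mathbb B^\lambda\) and bounds every element of \(A\), and any upper bound of \(A\) in \(\mathbb B\) bounds \(A'\). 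Hence \(\sup_{\mathbb B}A=\sup_{\mathbb B}A'\in\mathbb B^\lambda\). Since \(\mathbb B^\lambda\) is a complete subalgebra containing \(G\), we get \(\mathbb B^\lambda=\mathbb B\). This contradicts \(|\mathbb B^\lambda|<\kappa=|\mathbb B|\).

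The main obstacle is Step 2: phrasing the completeness-via-\(\kappa\)-cc property as a genuine \(\Pi^1_1\) statement whose reflection yields completeness of \(\mathbb B^\lambda\) relative to \(\mathbb B\), and not merely internal completeness. The ``same upper bounds'' formulation in Steps 2 and 3 is my first attempt at this.
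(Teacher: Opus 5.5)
Your proof is correct. The paper does not prove this lemma; it cites it from \cite[Lemma 5.2]{HamkinsWoodin2005NecessaryMaximality}, where it is attributed to Jech, so there is no in-paper argument to compare against.

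Both stages work as written. In Step 1, the maximal-antichain argument shows that every supremum in $\mathbb B$ is already the supremum of a subfamily of size $<\kappa$, so $\mathbb B=\bigcup_{\alpha<\kappa}\mathbb B_\alpha$. In Step 2, the ``same upper bounds'' sentence is a genuine $\Pi^1_1$ sentence over $V_\kappa$ with the order as a predicate: one universal quantifier over $A\subseteq\kappa$, followed by first-order clauses. It holds by Step 1 and the regularity of $\kappa$. Once reflected, it gives exactly what you need: $\mathbb B^\lambda$ is closed under suprema computed in $\mathbb B$, not merely complete in its own order.

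The same idea is often packaged with an embedding $j:M\to N$ with critical point $\kappa$ on a $\kappa$-model containing the coded algebra. By the $\kappa$-cc, antichains of $\mathbb B$ are bounded in $\kappa$ and hence fixed by $j$. This makes $\mathbb B=j(\mathbb B)\cap\kappa$ a complete subalgebra of $j(\mathbb B)$ in $N$ containing $j(G)=G$. Hence $\mathbb B=j(\mathbb B)$, which is absurd since $j(\mathbb B)$ has universe $j(\kappa)>\kappa$. That version trades your club and regularity bookkeeping for the fact that $j$ fixes bounded subsets of $\kappa$. Yours has the merit of isolating exactly which $\Pi^1_1$ fact is being reflected.

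Two small repairs:
\begin{enumerate}
\item The estimate $|\mathbb B_\alpha|^{<\kappa}<\kappa$ is false in the usual notation: for $\kappa$ a strong limit and $2\le\nu<\kappa$, $\nu^{<\kappa}=\sup_{\rho<\kappa}\nu^\rho=\kappa$. What you want is $|\mathbb B_{\alpha+1}|\le 2^{|\mathbb B_\alpha|}<\kappa$, since every subset of $\mathbb B_\alpha$ already has size $<\kappa$.
\item To put $\lambda$ into the club $C$, add $C$ itself as a predicate and reflect ``$C$ is unbounded''; closure of $C$ then gives $\lambda\in C$. Suppose instead you define $C$ by a formula from the predicate $\{(\alpha,\beta):\beta\in\mathbb B_\alpha\}$ and relativize to $V_\lambda$. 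Then the relativized formula cannot see elements of $\mathbb B_\alpha$ above $\lambda$, so it does not by itself give $\mathbb B_\alpha\subseteq\lambda$ for $\alpha<\lambda$.
\end{enumerate}
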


\begin{thm}\label{thm:woodin_vopenka}
    Suppose \(X\) is a transitive set and \(M\) is an inner model of \(\ZFC\) such that \(M = \HOD_{M\cup \{X\}}\).
    Let \(g \subseteq \Col(\omega,X)\) be \(V\)-generic and let \(\kappa = \Theta(X^{<\omega})\). Then
    \(g\) belongs to a \(\kappa\)-cc forcing extension of \(M\).
\end{thm}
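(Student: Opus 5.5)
We will use a Vopěnka-style argument relativized to \(M\cup\{X\}\). Work in \(V\). Let \(\mathbb P\) be the Vopěnka algebra for adding an element of \(\Col(\omega,X)\)-generic-type objects over \(M\); concretely, let \(\mathbb B\) be the Boolean algebra of all subsets of \(Z = \Col(\omega,X)^{\omega}\)-filters (or simply of \(P(X^{<\omega})\)-coded reals of the form \(\bigcup g\)) that are ordinal definable from parameters in \(M\cup\{X\}\), ordered by inclusion. Since \(M = \HOD_{M\cup\{X\}}\), \(\mathbb B\) is isomorphic (via an \(\OD_{M\cup\{X\}}\) coding of its elements by ordinals) to a complete Boolean algebra \(\mathbb B^*\in M\). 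The standard Vopěnka argument then shows that for \(g\) generic over \(V\), the set \(G_g = \{A\in\mathbb B : g\in A^{V[g]}\}\) is \(M\)-generic for \(\mathbb B^*\) and \(g\in M[G_g]\); here we interpret each \(\OD_{M\cup\{X\}}\) set of conditions/filters via its definition in \(V[g]\), using homogeneity of \(\Col(\omega,X)\) to see that \(\OD_{M\cup\{X\}}^{V[g]}\) sets are definable in \(V\) from the same parameters. Genericity: if \(D\in M\) is a maximal antichain in \(\mathbb B^*\), then \(\bigcup D\) is \(\OD_{M\cup\{X\}}\) and its complement is an \(\OD\) set disjoint from all members, hence empty by maximality, so \(g\in\bigcup D\). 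Recovering \(g\): for each \(p\in X^{<\omega}\), the set \(\{h : p\in h\}\) is in \(\mathbb B\) (parameter \(X\)), so \(g = \{p : [p]\in G_g\}\in M[G_g]\).

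The main step is the chain condition: show \(\mathbb B^*\) is \(\kappa\)-cc in \(M\), where \(\kappa = \Theta(X^{<\omega})\). Suppose \(\langle A_\alpha : \alpha<\mu\rangle\in M\) is an antichain of nonzero elements. Each \(A_\alpha\) is a nonempty set of (names for) generics, and more usefully each nonzero \(A_\alpha\) is determined by which conditions \(p\in X^{<\omega}\) force \(\dot g\in A_\alpha\): define \(a_\alpha = \{p\in X^{<\omega}: p\Vdash \dot g\in \dot A_\alpha\}\). Since \(A_\alpha\ne 0\), \(a_\alpha\neq\emptyset\), and disjointness of the \(A_\alpha\) gives disjointness of the \(a_\alpha\) (a condition forcing \(\dot g\) into two disjoint sets is impossible). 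So we obtain a partial surjection \(X^{<\omega}\to\mu\), \(p\mapsto\) the unique \(\alpha\) with \(p\in a_\alpha\), which is definable from the sequence (an element of \(M\), so fine in \(V\)). Hence \(\mu<\Theta(X^{<\omega}) = \kappa\). This is where \(\Theta\) rather than \(\aleph\) enters, and it is the point I expect to require care: one must verify that \(A_\alpha\) really corresponds to a \(\Col(\omega,X)\)-name so that \(a_\alpha\) is a well-defined nonempty set of conditions, i.e.\ that the elements of \(\mathbb B\) are taken as \(\OD_{M\cup\{X\}}\) sets of the right type in \(V[g]\) and homogeneity gives the forcing-relation description. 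I would set up \(\mathbb B\) from the start as \(\OD_{M\cup\{X\}}\) subsets of \(X^{<\omega}\) that are regular open (closed downward and under density), with \(g\in A\) meaning \(g\cap A\neq\emptyset\); then \(a_\alpha=A_\alpha\) directly and the argument above is immediate.

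Finally completeness: \(\mathbb B\) is complete in \(M\) because the join of an \(M\)-family of regular open \(\OD_{M\cup\{X\}}\) sets is the regular interior of the closure of the union, again \(\OD_{M\cup\{X\}}\) (the family is in \(M\), hence \(\OD_{M\cup\{X\}}\)). Thus \(g\in M[G_g]\) with \(\mathbb B^*\) \(\kappa\)-cc in \(M\), as required.
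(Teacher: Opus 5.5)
Your recovery step fails. You assert that for \(p\in\Col(\omega,X)\) the cone \(\{h : p\in h\}\) lies in \(\mathbb B\) ``with parameter \(X\)''. But this set is definable from \(p\), not from \(X\), and \(p\) is a finite function into \(X\) whose values need not be ordinal definable from parameters in \(M\cup\{X\}\). For example, take \(M=\HOD\) and \(X=V_{\omega+1}\) in a model with a non-\(\OD\) real \(x\). Then \(\HOD_{M\cup\{X\}}=\HOD\) since \(X\) is \(\OD\), and the cone of \(\{(0,x)\}\) is not \(\OD\). So \(\{p : [p]\in G_g\}\) is not available in \(M[G_g]\), and as written nothing shows \(g\in M[G_g]\). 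Tellingly, your argument never uses that \(X\) is transitive.

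The repair is to recover \(g\) from a real. The set of conditions forcing \(\dot g(n)\in\dot g(m)\), namely \(b_{n,m}=\{p : n,m\in\textnormal{dom}(p),\ p(n)\in p(m)\}\), is regular open and definable from \(X\) alone. Hence \(b_{n,m}\in\mathbb B\), the codes of \(\langle b_{n,m}\rangle_{n,m}\) form a sequence in \(M\), and \(x_g=\{(n,m) : b_{n,m}\in G_g\}=\{(n,m) : g(n)\in g(m)\}\) lies in \(M[G_g]\). Because \(X\) is transitive and \(g\) maps onto \(X\), \(g(n)=g(m)\) if and only if \(n\) and \(m\) have the same \(x_g\)-predecessors. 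So in \(M[G_g]\) the transitive collapse of \((\omega,x_g)\) modulo this equivalence is \(X\), with collapse map \(n\mapsto g(n)\); thus \(g\in M[G_g]\). This is the coding the paper itself uses in the proof of \cref{prp:measurable-preserves-local-regularity}. In the genericity step, also use the regular-open pseudo-complement of \(\bigcup D\) rather than its set complement. With these fixes, your direct Vop\v{e}nka construction works: genericity, completeness, and the \(\kappa\)-cc via a partial surjection from conditions onto an antichain all go through. It is a genuinely different route from the paper's, which never builds the algebra. The paper instead shows that \(M\) has the uniform \(\kappa\)-cover property in \(M_g=\HOD^{V[g]}_{M\cup\{g\}}\), covering each \(f(i)\) by \(F(i)=\{\alpha : \exists p\,(p\Vdash\varphi(i,\alpha,\dot g,a))\}\in M\), and then cites Bukovsky's criterion. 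There \(g\in M_g\) is automatic, so the recovery problem never arises. Your approach yields an explicit forcing at the cost of this extra decoding step; the chain-condition bound is the same counting in both.
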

\begin{proof}
    We verify that
    \(M\) has the uniform \(\kappa\)-cover property in 
    \(M_g = \HOD^{V[g]}_{M\cup \{g\}}\) and then cite Bukovsky's criterion
    \cite[Proposition 4.37]{GoldbergUniqueness}.
    Suppose \(I\in M\) and \(f : I \to \Ord\) is a function in \(M_g\). Fix a formula \(\varphi\) and a parameter \(a\in M\) such that \[f(i) = \alpha\iff V[g]\vDash \varphi(i,\alpha,g,a)\]
    For each \(i\in I\), let 
    \[F(i) = \{\alpha\in \Ord : \exists p\in \Col(\omega,X)\, \left(p\Vdash \varphi(i,\alpha,g,a)\right)\}\]
    Then \(F\in M\), and for all \(i \in I\), \(f(i)\in F(i)\) and \(|F(i)| \leq^* |X^{<\omega}|\), so \(\ot(F(i)) < \kappa\).
\end{proof}

\begin{prp}\label{prp:measurable-preserves-local-regularity}
    Suppose \(\kappa\) is measurable and \(X\) is a transitive
    set with \(\Theta(X)\leq \kappa\). Then, for
    any inner model \(M\) of \(\ZFC\), the following hold:
    \begin{enumerate}[(1)]
        \item \(\kappa\) is strongly inaccessible in
        \(M(X)\).
        \item For any normal measure \(W\) on \(\kappa\),
        \(W\)-almost every \(\rho<\kappa\) remains regular in
        \(M(X)\).
    \end{enumerate}
\end{prp}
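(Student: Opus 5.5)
Fix a normal measure \(W\) on \(\kappa\) with ultrapower \(j : V\to \Ult(V,W)\); a normal measure exists because \(\kappa\) is measurable. Since we are in ZF, \L o\'s's theorem may fail. So I would work with \(j\) only through its action on ordinals and on sets in \(M(X)\), whose elements are definable from ordinals, elements of \(M\), and elements of \(X\). The plan is to compare \(M(X)\) with its image under the ultrapower.

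The key observation is that \(\Theta(X)\leq\kappa\) forces \(j\) to fix \(X\) pointwise and also fix \(X\) as a set. Concretely, there is no surjection from \(X\) onto \(\kappa\). So for any \(f:\kappa\to X\), the pushforward \(f_*(W)\) is a \(\kappa\)-complete ultrafilter on \(X\). It is concentrated on a single point: otherwise one could partition \(X\) into \(\kappa\) many \(f_*W\)-null pieces, which yields a surjection of \(X\) onto \(\kappa\) modulo a null set. Hence every function \(\kappa\to X\) is \(W\)-a.e.\ constant. I would prove this by induction on rank, giving \(j(x)=x\) for all \(x\in X\) and \(j(X)=X\). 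For \(M\), the relevant statement is that \(j\restriction M\) is an elementary embedding of \(M\) into \(j(M)\), since \(M\) is a definable class of ZFC. The embedding is computed in ZF via functions in \(V\), so its restriction to the ZFC model \(M\) is an honest ultrapower-like embedding.

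Now \(M(X)=\bigcup_{\alpha}L(V_\alpha^M\cup X)\)-style hulls: every element is definable in \(M(X)\) from an element of \(M\), finitely many elements of \(X\), and ordinals. Since \(j\) fixes \(X\), the extension \(j\restriction M(X): M(X)\to j(M)(X)\) is elementary, via the standard Lévy–Solovay-style argument. The critical point of this extension is \(\kappa\). Moreover, \(P(\kappa)\cap M(X)\subseteq j(M)(X)\), because each \(A\subseteq\kappa\) in \(M(X)\) satisfies \(A=j(A)\cap\kappa\).

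For (1), suppose \(\kappa\) were not strongly inaccessible in \(M(X)\). Then there would be \(\alpha<\kappa\) and a cofinal map \(V_\alpha\to\kappa\) in \(M(X)\). Applying \(j\), the map \(j(g)\) is cofinal \(j(V_\alpha)\to j(\kappa)\). Since \(\alpha<\kappa\), we get \(j(V_\alpha^{M(X)})=V_\alpha^{M(X)}\) and \(j(g)=j\circ g\) on it, which is bounded by \(\kappa<j(\kappa)\). This is a contradiction. To check \(j(V_\alpha)=V_\alpha\) pointwise, I would use the fact that each element of \(V^{M(X)}_{\alpha}\) has size \({<}\kappa\)-ish. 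This is circular, so instead I would argue by induction on \(\alpha<\kappa\) that \(j\restriction V_\alpha^{M(X)}\) is the identity. The key ingredient is that any surjection from \(V_\alpha^{M(X)}\) onto \(\kappa\) composed with codes would give \(\Theta\) too big; equivalently, I would first show \(\Theta(V^{M(X)}_\alpha)\leq\kappa\) by induction, using regularity of \(\kappa\) in \(V\) together with \(\Theta(X)\leq\kappa\). This step is the main obstacle. I would try to bound \(V_\alpha^{M(X)}\) as a surjective image of \(V_\alpha^M\times X^{<\omega}\times\alpha'\) with \(\alpha'<\kappa\), using the fact that \(\kappa\) is inaccessible in \(M\) by the standard argument for \(M\).

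For (2), I use normality: \(\kappa\) is regular in \(j(M)(X)\) by elementarity and (1), and it is represented by the identity. Hence \(W\)-almost every \(\rho\) is regular in \(M(X)\), by the restricted \L o\'s theorem for the formula ``\(\rho\) is regular in \(M(X)\)''. Since that formula is about \(M(X)\), which \(j\) moves elementarily, this is the one instance of \L o\'s's theorem we need. I would justify it directly: if the set of singular \(\rho\) were in \(W\), choose cofinal witnesses \(c_\rho\) in \(M(X)\) uniformly via definable well-orders of \(M\) and parameters from \(X^{<\omega}\). The parameters are constant a.e.\ by the point-fixing argument, so \([\rho\mapsto c_\rho]\) witnesses singularity of \(\kappa\) in \(j(M)(X)\), which is a contradiction.
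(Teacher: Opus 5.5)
\textbf{There is a genuine gap.} It sits at the heart of your argument: the claim that \(j\restriction M(X)\) is elementary into \(j(M)(X)\) ``via the standard L\'evy--Solovay-style argument,'' together with the bound \(\Theta(V_\alpha^{M(X)})\leq\kappa\) that you yourself flag as the main obstacle.

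Some of your steps are fine. The point-fixing lemma is correct: the range of any \(f:\kappa\to X\) is well-orderable, so by \(\Theta(X)\le\kappa\) it has size \(<\kappa\), and \(f\) is \(W\)-a.e.\ constant. \L o\'s for \(M\) alone can also be salvaged using a well-order in \(M\) of a suitable \(V_\gamma^M\).

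The problems are these.
\begin{enumerate}[(a)]
\item L\'evy--Solovay works only because the extension is generic over the ground model for a forcing that \(j\) fixes. Knowing \(j(x)=x\) for \(x\in X\) does not let \(j\restriction M\) transfer truths of \(M(X)\) about \(X\), since those are not statements inside \(M\).
\item Nor can you obtain by hand the one instance of \L o\'s you need in (2). To build \(\rho\mapsto c_\rho\), you must select, for \(W\)-almost every \(\rho\), a parameter \(\vec x_\rho\) from the nonempty set \(Z_\rho\subseteq X^{<\omega}\) of parameters that define a witness (together with \(M\)-parameters and ordinals). Your point-fixing lemma only applies once such a function is in hand. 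Finding a single \(\vec x\) lying in \(Z_\rho\) for \(W\)-a.e.\ \(\rho\) needs something like closure of \(W\) under \(X^{<\omega}\)-indexed intersections. That closure is what the nonexistence of \(\kappa\)-sequences of distinct subsets of \(X^{<\omega}\) would give. But \(\Theta(X)\le\kappa\) does not obviously yield it, and in this paper the proposition is precisely the tool used in \cref{cor:no-long-rank-sequences} to rule out such sequences.
\item Your proposed bound of \(V_\alpha^{M(X)}\) by \(V_\alpha^M\times X^{<\omega}\times\alpha'\) with \(\alpha'<\kappa\) is unsupported. The ordinal parameters needed to define elements of \(V^{M(X)}_\alpha\) are not controlled by \(\alpha\): in \(L(\mathbb R)\), for instance, sets of reals need ordinal parameters cofinal in \(\Theta^{L(\mathbb R)}\). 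Showing these ordinals stay below \(\kappa\) is essentially statement (1) itself.
\end{enumerate}

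\textbf{The missing idea} is to place \(M(X)\) inside a small forcing extension of a \(\ZFC\) model in which \(\kappa\) is measurable. The paper does this in four steps.
\begin{enumerate}[(i)]
\item Let \(M'\) be the class of sets hereditarily ordinal definable from parameters in \(M\cup\{X,W\}\). Then \(M'\vDash\ZFC\) and \(W\cap M'\in M'\), so \(\kappa\) is measurable in \(M'\). Also \(M'=\HOD_{M'\cup\{X\}}\).
\item By \cref{thm:woodin_vopenka} (Vop\v{e}nka plus Bukovsk\'y), a \(V\)-generic collapse of \(X\) is added over \(M'\) by a \(\kappa\)-cc algebra \(\mathbb A\in M'\).
\item Pass to the complete subalgebra \(\mathbb B\) generated by the Boolean values of a name for a real coding \((X,\in)\). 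Then \(M'(X)\) lies in a \(\mathbb B\)-generic extension of \(M'\), and \(\mathbb B\) is \(\kappa\)-cc and countably generated. Jech's \cref{lma:small-completely-generated-algebra} therefore gives \(|\mathbb B|<\kappa\).
\item Now (1) and (2) are ordinary small-forcing preservation facts. For (2), normality of \(W\cap M'\) in \(M'\) puts \(W\)-almost every \(\rho\) among the \(M'\)-regular cardinals above \(|\mathbb B|\). Both properties then pass down from \(M'(X)\) to \(M(X)\subseteq M'(X)\).
\end{enumerate}
Once this representation is in hand, the external ultrapower and its ZF subtleties are unnecessary. It is also exactly what would make your L\'evy--Solovay step legitimate.
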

\begin{proof}
    Fix a measure \(W\) witnessing that \(\kappa\) is measurable.
    Let \(M'\) be the class of sets hereditarily
    ordinal definable from parameters in \(M\cup \{X,W\}\). Then \(M' = \HOD_{M'\cup \{X\}}\). Since \(M\subseteq M'\), it suffices to prove (1) and (2) for \(M'(X)\).

    Applying \cref{thm:woodin_vopenka},
    let \(\mathbb A\) be a \(\kappa\)-cc complete Boolean  
    algebra and \(G\subseteq \mathbb A\) be an \(M'\)-generic filter such that \(M'[G]\) contains a \(V\)-generic collapse \(g : \omega\to X\).
    Let \(\dot x\) be an \(\mathbb A\)-name for
    a real in \(M'[G]\) coding
    \(X\); for example, \(\dot x\) could code the pull-back
    of the membership relation \(\in\restriction X\) under \(\dot g\) to a binary relation on \(\omega\).
    Let \(\mathbb B\) be the complete subalgebra of 
    \(\mathbb A\) generated by the Boolean values of 
    \(\dot x(n) = m\) for \(n,m < \omega\).
    Then in \(M'\), \(\mathbb B\) is a countably generated \(\kappa\)-cc Boolean algebra and
    \(M'(X)\) is contained in a \(\mathbb B\)-generic extension of \(M'\).

    Since \(\kappa\) is measurable in \(M'\), \cref{lma:small-completely-generated-algebra} implies \(|\mathbb B| < \kappa\).
    Therefore \(\kappa\) remains strongly inaccessible in any \(\mathbb B\)-generic extension of \(M'\), and so \(\kappa\) is strongly inaccessible in 
    \(M'(X)\), which proves (1).
    For (2), note that if \(W\) is normal, then \(W\) concentrates on regular cardinals of \(M'\), and forcing with \(\mathbb B\) preserves all sufficiently large \(M'\)-regular cardinals below \(\kappa\).
\end{proof}

For the next corollary, we return to the setting of
\cref{sec:minimal-hierarchy}: assume
\[
    \ZF+\AD_{\mathbb R}+\Theta_\textnormal{reg}
    +V=L(P(\mathbb R))+\textnormal{HPC}.
\]
Suppose \(\xi < \Theta\) is inaccessible in \(N^+ = N_\xi^+\)
and \(N = N_\xi\) is a minimal Nairian model. 

\begin{cor}\label{cor:no-long-rank-sequences}
    In \(N\), if \(\omega\leq \alpha < \xi\), there is no
    \(\Theta(V_\alpha)\)-sequence of distinct subsets of
    \(V_\alpha\).
\end{cor}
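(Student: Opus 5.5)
Let \(\kappa = \Theta(V_\alpha)\), and suppose towards a contradiction that \(\langle S_\nu : \nu < \kappa\rangle\in N\) is a sequence of distinct subsets of \(V_\alpha\). By \cref{thm:theta_meas}, \(\kappa\) is measurable in \(N\). Fix a normal measure \(W\) on \(\kappa\) in \(N\); normalizing \(W_\kappa\) is routine. The idea is to apply \cref{prp:measurable-preserves-local-regularity} inside \(N\) with \(X = V_\alpha\), noting that \(\Theta(V_\alpha) = \kappa\leq\kappa\). We take \(M = \HOD^N_{\{\vec S\}}\), or a model of \(\ZFC\) such as \(\HOD^N_{\vec S, W}\) containing the sequence \(\vec S = \langle S_\nu\rangle_{\nu<\kappa}\). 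Then \(\kappa\) is strongly inaccessible in \(M(V_\alpha)\).

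Inside \(M(V_\alpha)\) we still have \(\vec S\) and \(V_\alpha\). Hence \(M(V_\alpha)\) contains an injection \(\nu\mapsto S_\nu\) from \(\kappa\) into \(P(V_\alpha)\). Since \(M(V_\alpha) = M[V_\alpha]\) has the form \(\HOD_{M\cup\{V_\alpha\}}\) computed appropriately, we must exploit that it is a model in which \(V_\alpha\) is small relative to \(\kappa\). The cleanest route is the proof of \cref{prp:measurable-preserves-local-regularity} itself. \(M'(V_\alpha)\) is contained in a \(\mathbb B\)-generic extension \(M'[G]\) of \(M'\), with \(|\mathbb B|<\kappa\), and \(M'[G]\) is a model of \(\ZFC\) in which \(\kappa\) is strongly inaccessible. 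In \(M'[G]\), \(V_\alpha\) is a set of rank \(\alpha<\kappa\). Each \(S_\nu\subseteq V_\alpha\) lies in \(M'[G]\), so \(\{S_\nu:\nu<\kappa\}\subseteq P(V_\alpha)^{M'[G]}\), and \(|P(V_\alpha)|^{M'[G]}<\kappa\) by strong inaccessibility (here \(V_\alpha\) of \(N\) is a set of hereditary size \({<}\kappa\) in \(M'[G]\)). This contradicts the injectivity of \(\nu\mapsto S_\nu\), which is absolute upward to \(M'[G]\) provided \(\vec S\in M'\).

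The key points to verify are as follows. First, \(\vec S\) can be put into \(M'\) by including it as a parameter in the definition of \(M'\), namely as hereditarily \(\OD\) from \(M\cup\{X,W,\vec S\}\). This preserves \(M'=\HOD_{M'\cup\{X\}}\) and \(\ZFC\) in \(M'\). Second, we need \(\kappa\) measurable in \(M'\), since \(W\cap M'\in M'\). Third, the hypothesis of \cref{thm:woodin_vopenka} is computed in \(N\): \(\Theta(V_\alpha^{<\omega}) = \Theta(V_\alpha)=\kappa\), because \(\alpha\) is infinite. Finally, we apply the lemmas inside \(N\), which satisfies \(\ZF\); the generic collapse is taken over \(N\).

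The main obstacle is the bound \(|P(V_\alpha)^{M'[G]}|<\kappa\). This needs \(V_\alpha\) (of \(N\)) to have size \({<}\kappa\) in \(M'[G]\). It holds because \(V_\alpha\) is countable in \(M'[G]\): \(G\) adds a surjection from \(\omega\) onto \(V_\alpha\). Then \(|P(V_\alpha)|^{M'[G]} = (2^{\aleph_0})^{M'[G]}<\kappa\), by the inaccessibility of \(\kappa\) in \(M'[G]\) established in \cref{prp:measurable-preserves-local-regularity}(1). A subtle point is that that proof places \(M'(X)\), rather than \(g\) itself, in the \(\mathbb B\)-extension. We therefore instead work directly in the \(\mathbb A\)-extension \(M'[G]\), using the code \(\dot x\) and the generated algebra \(\mathbb B\). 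Since each \(S_\nu\) is definable from \(X\) and \(\vec S\) in \(M'(X)\subseteq M'[G\cap\mathbb B]\), the injection lives in \(M'[G\cap \mathbb B]\), where \(X\) is coded by a real. This yields the contradiction.
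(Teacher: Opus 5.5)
Your proposal has a genuine gap at the step where you put \(\vec S\) into the \(\ZFC\) model \(M'\), and hence into \(M'(V_\alpha)\) or \(M'[G\cap\mathbb B]\).

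Using \(\vec S\) as a parameter makes \(\vec S\) ordinal definable from the parameters, but not \emph{hereditarily} so. The members of each \(S_\nu\) are arbitrary elements of \(V_\alpha\), and these need not be \(\OD\) from anything in \(M\cup\{X,W,\vec S\}\). In fact, given the measurability of \(\kappa\), no inner model of \(\ZFC\) contained in \(N\) can contain \(\vec S\). Suppose one did. Then \(Y=\bigcup_{\nu<\kappa}S_\nu\) would be well-orderable in \(N\). Since \(Y\subseteq V_\alpha\), it would have order type some \(\mu<\Theta(V_\alpha)=\kappa\). Now take a nonprincipal \(\kappa\)-complete ultrafilter on \(\kappa\), and intersect, over \(\beta<\mu\), whichever of \(\{\nu:\beta\in S_\nu\}\) or its complement lies in the ultrafilter. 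This intersection is in the ultrafilter but has at most one element, a contradiction.

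So granting \(\vec S\in M'\) makes the corollary trivial, and all of its content sits in that unproved step. The weaker claim \(\vec S\in M(V_\alpha)\), for some inner model \(M\) of \(\ZFC\), would suffice. In that case \(S_\nu\mapsto\nu\) would be a cofinal map from \(V_{\alpha+1}^{M(V_\alpha)}\) to \(\kappa\), contradicting \cref{prp:measurable-preserves-local-regularity}(1) directly. But nothing you say justifies it.

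Passing to the full \(\mathbb A\)-extension does not rescue the argument either. \(\mathbb A\) is merely \(\kappa\)-cc and can add \(\kappa\) reals, so \(\kappa\) need not stay inaccessible in \(M'[G]\). Inaccessibility is only obtained for the extension by the subalgebra \(\mathbb B\) generated by the single real \(\dot x\), via \cref{lma:small-completely-generated-algebra}. Nothing places \(\vec S\) in that extension.

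The missing idea is a reason why a coded version of the sequence lies in some \(M(X)\) with \(M\vDash\ZFC\) and \(\Theta(X)\leq\kappa\). The paper supplies this for successor \(\alpha\):
\begin{enumerate}[(1)]
\item \cref{lma:V_vs_I} gives a surjection \(f:\eta^\omega\to V_\alpha\) with \(\eta=\Theta(V_{\alpha-1})<\kappa\).
\item The pullbacks \(B_\nu=f^{-1}[S_\nu]\) are distinct subsets of \(\eta^\omega\).
\item Choosing \(s\in I_\xi\) with \(f\) and \(\vec S\) both \(\OD_s\), \cite[Theorem 12.11]{FailureSquare} provides an \(\infty\)-Borel code in \(\HOD_s\) for \(\{(\nu,t):t\in B_\nu\}\). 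Hence \(\langle B_\nu:\nu<\kappa\rangle\in\HOD_s(\eta^\omega)\).
\item \cref{prp:measurable-preserves-local-regularity}(1), applied with \(M=\HOD_s\) and \(X=\eta^\omega\), gives the contradiction.
\end{enumerate}
This Borel-code theorem is the Nairian-specific input your proposal lacks.

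The paper also treats limit \(\alpha\) separately, which your uniform treatment skips. By the successor case and \cite[Lemma 3.5]{Goldberg2024MeasurableChoiceless}, a nonprincipal \(\kappa\)-complete ultrafilter on \(\kappa\) is closed under \(V_\beta\)-indexed intersections for \(\beta<\alpha\). Since \(\alpha<\kappa\), it is then closed under \(V_\alpha\)-indexed intersections, and the standard argument (as in \cite[Lemma 10.4]{Jech2003SetTheory}) finishes.
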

\begin{proof}
    Let \(\kappa=\Theta(V_\alpha)\), which is measurable by
    \cref{thm:theta_meas}.

    First suppose that \(\alpha\) is a successor ordinal.
    Assume towards
    a contradiction that 
    \(\vec A=\langle A_\nu:\nu<\kappa\rangle\)
    is a sequence of distinct subsets of \(V_\alpha\).
    By \cref{lma:V_vs_I}, for some \(\eta<\kappa\),
    there is a surjection
    \(f:\eta^\omega\to V_\alpha\). (We can take
    \(\eta = \Theta(V_{\alpha-1})\).)
    Let
    \(B_\nu=f^{-1}[A_\nu]\) for \(\nu<\kappa\).
    Choose \(s\in I_\xi\) such that \(f\) and
    \(\vec A\) are \(\OD_s\). By
    \cite[Theorem 12.11]{FailureSquare}, the relation
    \[
        \{(\nu,t)\in\kappa\times\eta^\omega:t\in B_\nu\},
    \]
    has an \(\infty\)-Borel code in \(\HOD_s\). Hence
    \(\langle B_\nu:\nu<\kappa\rangle\) belongs to
    \(\HOD_s(\eta^\omega)\).
    By \cref{prp:measurable-preserves-local-regularity}(1),
    \(\kappa\) is strongly inaccessible in \(\HOD_s(\eta^\omega)\),
    contradicting that
    \(\kappa\) injects into \(P(\eta^\omega)\)
    via the map \(f(\nu) = B_\nu\).

    Now suppose that \(\alpha\) is a limit ordinal, and let \(U\) be a nonprincipal
    \(\kappa\)-complete ultrafilter on \(\kappa\). 
    The successor case shows that if
    \(\beta<\alpha\), there is no
    \(\kappa\)-sequence of distinct subsets of \(V_\beta\).
    By \cite[Lemma 3.5]{Goldberg2024MeasurableChoiceless},
    \(U\) is therefore closed under
    \(V_\beta\)-indexed intersections. Since \(\alpha<\kappa\) and \(U\) is \(\kappa\)-complete, this implies that \(U\) is
    closed under \(V_\alpha\)-indexed intersections as well.
    A standard argument now
    shows that there is no \(\kappa\)-sequence of distinct subsets of \(V_\alpha\); see the proof of \cite[Lemma 10.4]{Jech2003SetTheory}.
\end{proof}

\subsection{Nairian models, supercompact cardinals, and Gitik's theorem}\label{sec:gitik}
In this subsection (except \cref{prp:not_vacuous}), we assume:
\begin{itemize}
    \item \(\AD^+ + V = L(P(\mathbb R))\).
    \item \(\xi \leq \Theta\) is strongly inaccessible in \(N^+ = N_\xi^+\).
\end{itemize}
This means we can apply the results
of \cref{sec:general-measurable} to any \(\delta\) that is regular in \(N = N_\xi\)
and satisfies \(\Theta^{N}\leq \delta < \xi\).
In particular, \(W_\delta\) is defined for all such \(\delta\); see \cref{defn:w_delta}.

\begin{thm}\label{thm:dichotomy}
    Exactly one of the following holds:
    \begin{enumerate}[(1)]
        \item \label{item:eventual_complete} \(N\) satisfies
        that for all \(\kappa\), for all sufficiently large regular \(\delta\),
        \(W_\delta\) is \(\kappa\)-complete.
        \item \label{item:supercompact} \(\HOD\cap V_\xi\) satisfies that there is a supercompact cardinal.
    \end{enumerate}
    \begin{proof}
        Work in \(N^+\).
        Assume \ref{item:eventual_complete} fails. By \cref{thm:monotonicity},
        there is some cardinal \(\kappa_*\) such that for all sufficiently large
        regular \(\delta < \xi\), \(\kappa_\delta = \kappa_*\).
        
        By \cref{prp:gamma_kappa_cov} and \cref{prp:kappa_cov}, for all regular \(\delta \in [\kappa_*,\xi)\), 
        there is some \(s\in I_\xi\) such that
        \(\kappa_*\) is \(\delta\)-supercompact in \(\HOD_s\)
        and \(\HOD_s\) has the \(\kappa_*\)-cover 
        and approximation properties at \(\delta\).
        Let \(\gamma = \kappa_*^+\). 
        By the Hamkins uniqueness theorem \cite[Theorem 4.7]{Goldberg2024StronglyCompactOD},
        if \(M_0\) and \(M_1\) are inner models of ZFC with the
        \(\kappa_*\)-approximation and cover properties at \(\delta\)
        such that \(M_0\cap H(\gamma) = M_1\cap H(\gamma)\),
        then \(M_0 \cap P(\delta) = M_1\cap P(\delta)\).
        By a pigeonhole argument, using that \(\xi\) is inaccessible,
        we can find a single transitive model \(M\subseteq V_\xi\)
        with the \(\kappa_*\)-cover and approximation properties
        such that for arbitrarily large regular \(\delta\),
        there is some \(s\in I_\xi\)
        such that \(P(\delta)\cap M = P(\delta)\cap \HOD_s\)
        and \(\kappa_*\) is \(\delta\)-supercompact in \(\HOD_s\).

        It follows easily that \(\kappa_*\) is supercompact in \(M\)
        and \(M\) satisfies \(\UA\). Moreover
        \(M\) is ordinal definable from \(x = M\cap H(\gamma)\).
        Thus \(\HOD^M\subseteq \HOD_x\). 
        Note also that \(\HOD^M\) has the \(\kappa_*\)-cover and approximation properties in \(M\) 
        since \(M\vDash \UA\) and \(\UA\) implies that \(\HOD\) is a weak extender
        model at each supercompact cardinal \cite[Theorem 6.2.15]{UABook}.

        We now leave \(N^+\) and work in the ambient determinacy model. 
        We have that \(\HOD^M\subseteq \HOD_x^{N^+}\subseteq \HOD_x\).
        By Vop\v{e}nka's theorem, \(\HOD_x\) is contained in a \(\rho\)-cc
        generic extension of \(\HOD\) for some \(\rho < \xi\). 
        (A coarse upper bound is \(\rho = (\Theta(V_{\gamma+1}))^N\).)
        Since \(\HOD^M\) has the \(\kappa_*\)-cover property below \(\xi\) 
        and is contained in a \(\rho\)-cc generic extension of \(\HOD\), it follows that \(\HOD\) has the
        \((\kappa_*,\rho)\)-cover property below \(\xi\). This covering property
        implies that for all regular \(\delta\in (\rho,\xi)\), \(\HOD\) satisfies
        \(\kappa_{\mathscr K_\delta}\leq \rho\). Applying \cref{prp:gamma_kappa_cov},
        it follows immediately that there is a supercompact cardinal in \(\HOD\cap V_\xi\).

        Finally, we sketch the proof that \ref{item:eventual_complete} and \ref{item:supercompact}
        are mutually exclusive. Assume \ref{item:supercompact} holds.
        We will show that \(\xi\) is a limit of regular cardinals in \(N^+\).
        Granting this, let us show that \ref{item:eventual_complete} fails. 
        Let \(\kappa_0\) be the least supercompact of \(\HOD\cap V_\xi\). 
        By \cite[Lemma 7.4.49]{UABook} if \(\kappa_0\leq \delta < \xi\) and \(\delta\) is a regular cardinal in \(\HOD\),
        then \(\mathscr K_\delta^0 = (\mathscr K_\delta)^\HOD\) has completeness
        \(\kappa_0\) in \(\HOD\).
        If \(\delta > \kappa_0\) is regular in \(N\), then by \cref{prp:W_to_K}
        \(\kappa_\delta\leq \kappa_{\mathscr K_\delta^0} = \kappa_0\), which rules out \ref{item:eventual_complete}.
        
        We now show that \(\xi\) is a limit of regular cardinals in \(N^+\).
        First, note that \cref{prp:gamma_kappa_cov} shows that \(\HOD\) has the \((\aleph_1,\kappa_0)\)-cover
        property below \(\xi\). 
        Thus the same is true for \(\HOD_s\) for any \(s\in I_\xi\),
        and it follows from \cref{lma:ketonen} that 
        there is a supercompact cardinal in \(\HOD_s\cap V_\xi\).
        Let us denote by \(\kappa_s\) the least supercompact cardinal of \(\HOD_s\cap V_\xi\). 
        Thus \(\HOD_s\) has the \((\aleph_1,\kappa_s)\)-cover property below \(\xi\).

        Fix \(s\in I_\xi\) with \(\kappa_s\) as small as possible.
        We claim that \(\kappa_s\) is regular in \(N\):
        otherwise, take \(t\in I_\xi\) such that \(s\in \HOD_t\) and
        \(\kappa_s\) is singular in \(\HOD_t\). Since \(\HOD_s\subseteq \HOD_t\), 
        \(\kappa_t\leq \kappa_s\), but since \(\kappa_s\) is singular
        in \(\HOD_t\), \(\kappa_t < \kappa_s\), contrary to the minimality of \(\kappa_s\).
        The proof of \cite[Theorem 2.8]{Goldberg2024StronglyCompactOD} 
        now shows that if \(\kappa_s \leq \delta < \xi\) and \(\delta\) is regular in \(\HOD_s\),
        then in \(N\), \(\cf(\delta) = |\delta|\). 
        Since \(\xi\) is a limit cardinal in \(N^+\),
        it follows that \(\xi\) is a limit of regular cardinals.
    \end{proof}
\end{thm}
Note that \cref{thm:dichotomy} \ref{item:supercompact} implies that one can force choice over
\(N\) by collapsing \(\kappa^\omega\); in this case,
one might say \(N\) is not truly \textit{Nairian}. This raises the question of whether
this alternative is even possible.
\cref{thm:dichotomy} \ref{item:supercompact} can be ruled out under fairly general hypotheses, 
including the central cases of minimal Nairian models and full Nairian models:
\begin{prp}\label{prp:hpc_no_sc}
    Assuming \(\AD_\mathbb R + \textnormal{HPC} + V = L(P(\mathbb R))\), \cref{thm:dichotomy} \ref{item:eventual_complete} holds.
    \begin{proof}
        Under these assumptions, 
        Steel's analysis of \(\HOD\) as an lbr hod pair \cite{Steel2022Comparison} shows that in \(\HOD\),
        there is no cardinal \(\kappa\) that is \(\kappa^+\)-supercompact.  
    \end{proof}
\end{prp}

\begin{prp}\label{prp:cardinal_no_sc}
    Assuming \(\xi\) is a cardinal in \(V\), \cref{thm:dichotomy} \ref{item:eventual_complete} holds.
    \begin{proof}
        By a theorem of Woodin \cite[Theorem 7.24]{Woodin2017UltimateL}, 
        assuming \(\AD^+ + V = L(P(\mathbb R))\), if \(\xi\) is a cardinal,
        then in \(\HOD\), no \(\kappa < \xi\) is \({<}\xi\)-supercompact.
    \end{proof}
\end{prp}

These propositions show that in the central cases,
\cref{thm:dichotomy} \ref{item:eventual_complete} holds. 
On the other hand, \cref{prp:not_vacuous} below 
shows that assuming the existence of very large cardinals in \(\HOD\) and 
a natural condition on the Sargsyan sequence, it is possible for
\cref{thm:dichotomy} \ref{item:supercompact} to hold in a Nairian model \(N = N_\xi\)
such that \(\xi\) is inaccessible in \(N^+\).

If \(X\) is ordinal definable, \(\Theta_0(X)\) denotes the
supremum of all ordinals \(\alpha\) such that there is an 
ordinal definable surjection from \(X\) onto \(\alpha\).
\begin{conj}\label{conj:ssh}
    Assume \(\AD^+ + V = L(P(\mathbb R))\). 
    Suppose \(\gamma < \Theta\) and the following hold:
    \begin{enumerate}[(1)]
        \item \(\gamma\) is a cardinal in \(\HOD\).
        \item No ordinal \(\kappa < \gamma\) is \({<}\gamma\)-supercompact in \(\HOD\).
        \item \(\cf(\gamma) = \omega\).
    \end{enumerate}
    Then \(\Theta_0(\gamma^\omega)\) is the least Woodin cardinal of \(\HOD\) above \(\gamma\).
\end{conj}
In \cref{prp:not_vacuous}, \textit{we drop the assumption that \(\xi\) is inaccessible
in \(N^+\)}, since our conclusion is simply that this must hold.
\begin{prp}\label{prp:not_vacuous}
    Assume \cref{conj:ssh}. If \(\xi\) is inaccessible in \(\HOD\)
    and \(\HOD\cap V_\xi\) satisfies that there is a supercompact cardinal,
    then \(\xi\) is inaccessible in \(N^+\).
    \begin{proof}
        Let \(\kappa\) be the least supercompact cardinal of \(\HOD\cap V_\xi\).
        We begin by computing \(\Theta_0(\kappa^\omega)\).
        Note that \(\kappa\) is a limit of cardinals \(\gamma\) satisfying
        the hypotheses of \cref{conj:ssh}. 
        Since \(\kappa\) is a limit of Woodins in \(\HOD\), 
        this implies that \(\Theta_0(\gamma^\omega) < \kappa\) for all \(\gamma < \kappa\). 
        Moreover by the \(\HOD\)-ultrafilter hypothesis (\cref{thm:hod_uf}), \(\cf(\kappa)\geq \omega_1\).
        Therefore \(\Theta_0(\kappa^\omega) = \kappa^{+\HOD}\).
        
        Let \(\mathbb P\in \HOD\) be the product Vop\v{e}nka forcing adding a sequence of elements of
        \(\kappa^\omega\) to \(\HOD\).
        By counting ordinal definable \(\infty\)-Borel codes, 
        one can show \(\HOD\vDash |\mathbb P| \leq 2^{<\theta}\) where \(\theta = \Theta_0(\kappa^\omega)\).
        In particular, \(|\mathbb P| < \xi\). 
        Since \(\HOD_{\kappa^\omega}\) is an inner model of a \(\mathbb P\)-generic extension of
        \(\HOD\), it follows that \(\xi\) remains inaccessible in \(\HOD_{\kappa^\omega}\).

        By \cref{prp:gamma_kappa_cov}, \(\HOD\) has the \((\aleph_1,\kappa)\)-cover property
        below \(\xi\). Therefore \(N^+ = \HOD_{\kappa^\omega}\),
        and so \(\xi\) remains inaccessible in \(N^+\), proving the proposition.
    \end{proof}
\end{prp}

We use the following theorem, abstracted from work of Adkisson--Ben-Neria \cite{AdkissonBenNeria2025},
to connect \cref{thm:dichotomy} to Gitik's theorem that ZF does not
prove the existence of a regular uncountable cardinal. 
\begin{thm}[Adkisson--Ben-Neria]\label{thm:abn}
    Suppose that there is a class function that assigns
    to each regular cardinal \(\delta\) a uniform measure \(U_\delta\in \meas(\delta)\) so that
    the function \(\delta \mapsto \kappa_{U_\delta}\) is monotone and unbounded. 
    Then there is a class symmetric extension
    satisfying \(\ZF\) in which every uncountable cardinal is singular.
    \begin{proof}
        We may assume there is a proper class of regular cardinals.
        Let \[C = \{\gamma \in \Ord : |\{\kappa_{U_\delta}: \delta\in \Reg\cap \gamma\}| = \gamma\}\]
        and let \(\langle \kappa_i : i \in \Ord\rangle\) enumerate the Beth fixed-points in \(C\) in increasing order.
        (An ordinal \(\kappa\) is a \textit{Beth fixed-point}
        if for all \(a\in V_\kappa\), \(\Theta(a) < \kappa\).)
        Let \(\vec U = \langle U_\delta: \delta\in \Reg\rangle\). 
        Let \(\mathbb P = \mathbb P_\text{fin}(\vec U)\) be Adkisson--Ben-Neria's class forcing 
        from \cite[Definition 56]{AdkissonBenNeria2025}.
        The proofs of \cite[Lemma 57 and Proposition 64]{AdkissonBenNeria2025},
        showing that Adkisson--Ben-Neria's symmetric submodel of \(V^\mathbb P\) 
        satisfies \(\ZF\) plus all uncountable cardinals are singular, 
        ostensibly require the existence of a rank Berkeley cardinal.
        In fact, their proofs go through in our context, with 
        the sequence of almost supercompact cardinals replaced by 
        the sequence \(\langle \kappa_i : i \in \Ord\rangle\) defined above. 
    \end{proof}
\end{thm}

\begin{cor}
    Assuming \(\textnormal{HPC}\) or that \(\xi\) is a \(V\)-cardinal, 
    \(N\) has a symmetric extension
    satisfying \(\ZF\) in which every uncountable cardinal is singular.\qed
\end{cor}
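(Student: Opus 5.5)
The corollary combines \cref{thm:abn}, applied inside \(N\), with the dichotomy of \cref{thm:dichotomy}. The plan is to show that \(N\) itself carries a class function \(\delta\mapsto U_\delta\) of the required kind. The class is definable over \(N\), which is a model of \(\ZF\) since \(\xi\) is strongly inaccessible in \(N^+\). The symmetric extension produced by \cref{thm:abn} is then built over \(N\), so that \(N\) plays the role of \(V\) and its ordinals \(\xi\) play the role of \(\Ord\).

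First define \(U_\delta\) for the regular cardinals \(\delta\) of \(N\). For regular \(\delta\) with \(\Theta^N\leq\delta<\xi\), take \(U_\delta = W_\delta\) from \cref{defn:w_delta}. By \cref{thm:unique_complete}, \(W_\delta\) is the unique weakly normal measure concentrating on \(S^\delta_\omega\). It is therefore definable in \(N\) from \(\delta\) alone, so \(\delta\mapsto W_\delta\) is a class function of \(N\). It is uniform because it extends the tail filter, being weakly normal on \(S^\delta_\omega\), and it is countably complete in \(N\). The regular cardinals below \(\Theta^N\) are handled separately. The simplest fix is to apply \cref{thm:abn} only to the regular cardinals \(\geq \Theta^N\), or to take \(U_\delta\) to be the \(\omega\)-club measure there. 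In the latter case the completenesses are \(\omega_1\), which is still consistent with monotonicity because \cref{prp:ks_complete} makes every \(\kappa_\delta\) with \(\delta\geq\Theta^N\) at least \(\Theta^N\). I would check that the Adkisson--Ben-Neria construction only needs monotonicity and unboundedness on a tail, so that this initial segment is harmless.

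Monotonicity of \(\delta\mapsto\kappa_\delta\) on the regular cardinals in \([\Theta^N,\xi)\) is exactly \cref{thm:monotonicity}. Unboundedness in \(\xi\) is clause \ref{item:eventual_complete} of \cref{thm:dichotomy}: for every \(\kappa<\xi\), \(W_\delta\) is \(\kappa\)-complete for all sufficiently large regular \(\delta\), so \(\kappa_\delta\geq\kappa\). That clause holds under HPC by \cref{prp:hpc_no_sc}, and when \(\xi\) is a \(V\)-cardinal by \cref{prp:cardinal_no_sc}. Here one notes that the hypotheses of those propositions, \(\AD_\mathbb R+\textnormal{HPC}\) in the ambient model or \(\xi\) being a cardinal, are the ones assumed in the corollary.

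Finally apply \cref{thm:abn} inside \(N\). The point I expect to need the most care is that \cref{thm:abn} is stated for \(V\) with a proper class of ordinals. We must check that running the Adkisson--Ben-Neria class forcing over \(N\) with the sequence \(\langle \kappa_i : i<\xi\rangle\) is legitimate. This requires that \(N\) has cofinally many regular cardinals, which holds because the \(\kappa_\delta\) are unbounded and are regular measurable-type completenesses. It also requires that the relevant Beth fixed points in \(C\) are unbounded in \(\xi\), which follows from the strong inaccessibility of \(\xi\) in \(N^+\) together with a closure argument. Granting this, the symmetric submodel satisfies \(\ZF\) and every uncountable cardinal in it is singular.
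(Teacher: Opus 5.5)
Your proposal is correct and matches the argument the paper intends; the paper gives no separate proof, treating the corollary as immediate. Take \(U_\delta = W_\delta\), get monotonicity from \cref{thm:monotonicity}, get unboundedness from clause \ref{item:eventual_complete} of \cref{thm:dichotomy} via \cref{prp:hpc_no_sc} or \cref{prp:cardinal_no_sc}, and apply \cref{thm:abn} inside the \(\ZF\) model \(N\), whose ordinals are \(\xi\).

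You can simplify the treatment of small cardinals. \cref{thm:monotonicity} is already stated for all regular \(\delta_0\leq\delta_1<\xi\) of \(N\), with the part below \(\Theta^N\) handled by the Steel--Woodin theorem. So you can use \(W_\delta\) for every regular \(\delta\) and drop both the tail modification of \cref{thm:abn} and the \(\omega\)-club detour, whose claim that the completenesses are \(\omega_1\) is unjustified anyway.
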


\begin{cor}\label{cor:gitik}
    The theory ``\textnormal{ZF + }every uncountable cardinal is singular'' 
    is consistent relative to a Woodin limit of Woodin cardinals.\qed
\end{cor}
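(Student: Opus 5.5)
The plan is to apply \cref{thm:abn} inside \(N\), working over the model \(V = N\), with \(U_\delta = W_\delta\) for each regular cardinal \(\delta\) of \(N\). In \(N\) the ordinals are exactly \(\xi\), so \(\xi\) plays the role of \(\Ord\). The proof then reduces to three checks.

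\textbf{Step 1: the class function exists.} I will show that \(\delta\mapsto W_\delta\) is a class function of \(N\) defined on all regular \(\delta\in[\Theta^N,\xi)\). By \cref{thm:unique_complete} and \cref{defn:w_delta}, \(W_\delta\) is the unique weakly normal measure of \(N\) concentrating on \(S^\delta_\omega\), so the map is definable in \(N\) without choice. Uniformity holds because \(W_\delta\) extends the tail filter.
\begin{itemize}
    \item For regular \(\delta<\Theta^N\), I will use any definable choice of uniform measure. Under \(\AD\), Kunen's well-ordering of measures lets us pick, for instance, the \(\omega\)-club filter measure, which is definable.
    \item Since \(\delta<\Theta^N\) implies \(\kappa_{U_\delta}\le\delta<\Theta^N\le\kappa_{W_{\delta'}}\) for every larger regular \(\delta'\) (\cref{prp:wd_generation}), monotonicity across the boundary \(\Theta^N\) is automatic.
    \item Below \(\Theta^N\), I expect to take the \(\omega\)-club measures, whose completeness is \(\omega_1\), so monotonicity is trivial there.
\end{itemize}

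\textbf{Step 2: monotonicity above \(\Theta^N\).} For regular \(\delta_0\le\delta_1\) in \([\Theta^N,\xi)\), \cref{thm:monotonicity} gives \(\kappa_{\delta_0}\le\kappa_{\delta_1}\). This completes monotonicity.

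\textbf{Step 3: unboundedness.} This is where the hypothesis is used.
\begin{itemize}
    \item Under HPC, \cref{prp:hpc_no_sc} gives alternative \ref{item:eventual_complete} of \cref{thm:dichotomy}.
    \item If \(\xi\) is a \(V\)-cardinal, \cref{prp:cardinal_no_sc} gives the same conclusion.
\end{itemize}
Alternative \ref{item:eventual_complete} says that for every \(\kappa<\xi\), all sufficiently large regular \(\delta\) have \(W_\delta\) \(\kappa\)-complete, that is, \(\kappa_\delta\ge\kappa\). So the map \(\delta\mapsto\kappa_\delta\) is unbounded in \(\xi\).

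\textbf{Step 4: invoke \cref{thm:abn}.} With Steps 1–3 in hand, \cref{thm:abn} yields a class symmetric extension of \(N\) satisfying ZF in which every uncountable cardinal is singular. Since the regular cardinals of \(N\) are unbounded in \(\xi\) (successors are regular via the inner-model-of-choice argument used in \cref{thm:main}), the assumption in the proof of \cref{thm:abn} that there is a proper class of regulars holds.

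\textbf{Main obstacle.} The step I expect to need care is Step 4, specifically checking that \cref{thm:abn} may be applied with \(N\) as ground model and \(\xi\) as its class of ordinals. Class forcing over \(N\) is ordinary set forcing from the viewpoint of \(N^+\), but the symmetric-model verification must be carried out internally to \(N\), which satisfies only ZF. I would first confirm that the Beth fixed points in \(C\) are unbounded in \(\xi\). This should follow from strong inaccessibility of \(\xi\) in \(N^+\) together with unboundedness of the \(\kappa_\delta\).
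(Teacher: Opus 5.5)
You follow the paper's route: take \(U_\delta=W_\delta\), get monotonicity from \cref{thm:monotonicity}, get unboundedness from alternative \ref{item:eventual_complete} of \cref{thm:dichotomy} via \cref{prp:hpc_no_sc} or \cref{prp:cardinal_no_sc}, and apply \cref{thm:abn} inside \(N\). Your ``main obstacle'' is not a real issue, since \cref{thm:abn} is a ZF theorem applied in \(N\) to a class definable there. However, as written you have only reproved the preceding conditional corollary about \(N\). The Woodin limit of Woodin cardinals never appears in your argument. The missing link is the existence of the setting, which the paper takes from Blue--Larson--Sargsyan \cite{NairianModels}. From a Woodin limit of Woodin cardinals they build a model of \(\AD_\mathbb R+\Theta\) regular \(+\,V=L(P(\mathbb R))+\textnormal{HPC}\) containing some \(\xi<\Theta\) strongly inaccessible in \(N^+_\xi\), with \(N_\xi\) a minimal Nairian model. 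The fact that HPC holds in their models is what licenses \cref{prp:hpc_no_sc}. Without citing this, nothing connects your Step 3 hypotheses (HPC, or \(\xi\) a \(V\)-cardinal) to the large cardinal in the statement.

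Your Step 4 justification that the regular cardinals of \(N\) are cofinal in \(\xi\) is also wrong, and it matters. Alternative \ref{item:eventual_complete} is vacuously true if the regular cardinals of \(N\) are bounded below \(\xi\). So it yields unboundedness of \(\delta\mapsto\kappa_\delta\) only once cofinally many regulars are known to exist. Successor cardinals need not be regular without choice: \(N\) satisfies \(\AD\), so \(\omega_3\) is singular there. The argument in \cref{thm:main} only shows that a cardinal of \(N\) is regular when it is a successor cardinal of an inner model of choice contained in \(N\). There, that came from the special fact \((\eta^+)^{\HOD_s}=(\eta^+)^N\). In the Blue--Larson--Sargsyan minimal Nairian model the repair is easy via \cref{thm:theta_meas}. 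Each \(\Theta(V_\alpha)\) with \(\omega\le\alpha<\xi\) is measurable, hence regular, and these are cofinal in \(\xi\) because \(\alpha<\Theta(V_\alpha)\). Finally, there is a minor slip in Step 1: the \(\omega\)-club measures on regular \(\delta<\Theta^N\) are normal, so their completeness is \(\delta\), not \(\omega_1\). Monotonicity still holds, and \cref{thm:monotonicity} is stated for all regular \(\delta_0\le\delta_1<\xi\), so you can simply use \(W_\delta\) throughout.
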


\section{Comparing Nairian models}
We work in this section under the hypotheses:
\begin{itemize}
    \item \(\AD^+ + \AD_\mathbb R + V = L(P(\mathbb R))\).
    \item \(\xi < \Theta\) is strongly inaccessible in \(N^+ = N_\xi^+\).
\end{itemize}
Our next theorem is a Nairian generalization of a key consequence of determinacy.
Assuming \(\AD_\mathbb R\), Woodin \cite{WoodinUniqueness} showed that for every set \(X\) with 
\(|X| \leq^* |\mathbb R|\), there is a unique normal fine measure 
\(\mu_X\) on \(P_{\omega_1}(X)\). 

If \(X\in N\), 
then the restriction
of \(\mu_X\) to \(N\) belongs to \(N\) and witnesses that 
\(\omega_1\) is \(X\)-supercompact there.
In analogy with Woodin's theorem, 
we will prove that this is the 
\textit{unique} normal fine measure on \(P_{\omega_1}(X)\)
in \(N\).

The main idea of the proof is that the ``iteration maps'' from \cref{thm:ua}
can be lifted from their original domains, which are countable hulls of HOD, 
to act on the Nairian models associated to these hulls.

We need the following strengthening of \cref{thm:derived}.
\begin{thm}[Woodin]\label{thm:derived2}
    In the context of \cref{thm:derived}, for any
    \(\gamma<\Theta\), the model \(M\)
    and the embedding \(j\) can be chosen so that \(j\restriction\gamma\in M\).
\end{thm}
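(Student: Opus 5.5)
This is a strengthening of \cref{thm:derived}: given \(\gamma<\Theta\), choose \(M\) and \(j\) so that \(j\restriction\gamma\in M\). The plan is to reduce to an ordinal-definability statement. Since \(\gamma<\Theta\), fix a prewellordering \(\le^*\) of \(\mathbb R\) of length at least \(\gamma\). We may take it \(\OD\), for example the canonical norm on a universal set at the appropriate Wadge rank. Every \(\alpha<\gamma\) is then the rank of some real in \(\le^*\). So \(j(\alpha)\) is the \(j(\le^*)\)-rank of \(j(x)=x\) for any real \(x\) of rank \(\alpha\), using that \(j\) fixes reals.

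It follows that \(j\restriction\gamma\) is determined by \(j(\le^*)\restriction\mathbb R^V\), together with the map from reals of \(V\) to their ranks in \(j(\le^*)\). The problem is thus to arrange that the relevant set of reals of the derived model \(D\) is represented in \(M\) itself, and not only in a generic extension.

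The first step is to revisit Woodin's construction behind \cref{thm:derived}. There \(M\) is obtained as a suitable \(\HOD\)-definable inner model, such as a direct limit of mice or an iterate of a countable hull, with a limit of Woodins \(\eta\). The embedding \(j\) arises by realizing \(V\) as the derived model of \(M\) at \(\eta\) after a symmetric collapse. I would choose \(M\) to be an iterate of a model \(M_0\) that already captures \(\le^*\). This means picking \(M_0\) so that \(\le^*\) and its \(\Sigma\)-correct norm are Suslin co-Suslin with trees in \(M_0\), below its least Woodin cardinal.

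Then \(j(\le^*)\) is computed by the image tree \(\pi(T)\) in \(M\), and the rank function on \(V\)'s reals is the canonical tree-rank function. The hope is that this function is computable in \(M\) from \(\pi(T)\) restricted to an initial segment of height \(\sup j[\gamma]\). It is ordinal-valued and defined by a wellfounded recursion on the tree, hence absolute to \(M\) once the tree lies in \(M\). To see that \(j\restriction\gamma\in M\), I would then express \(j(\alpha)\) as the rank in \(\pi(T)\) of an ordinal node that is the image of a node of \(T\) of rank \(\alpha\). Since \(\pi\restriction T\) lies in \(M\) when the iteration is internal or set-sized there, this gives \(j\restriction\gamma=\pi\restriction\gamma\) on the relevant ordinals.

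The main obstacle is this last identification. It needs the iteration map \(\pi\), or at least \(\pi\restriction\gamma\), to be an element of \(M\). I would try to secure this by performing all iterations below a Woodin cardinal of an ambient mouse \(\mathcal P\) whose derived model covers \(\gamma\), for instance via \cref{thm:woodin_strong} when \(V\ne L(\mathbb R,\Hom)\). I would then take \(M\) to be the final model of a Neeman genericity iteration of \(\mathcal P\) whose first stage fixes \(\gamma\) pointwise. This uses critical points above the trees for \(\le^*\), so that \(j\restriction\gamma\) becomes literally \(\pi\restriction\gamma\) computed from \(\pi(T)\).

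If the critical point can be pushed above \(\gamma\) itself, with \(M_0\) chosen so that \(\gamma\) lies below its first measurable, then \(j\restriction\gamma\) reduces to the identity-type map induced by \(T\), which is trivially in \(M\). The remaining work is verifying that \(\HOD\)-definability of \(M\) is preserved, using homogeneity exactly as in the remarks after \cref{thm:derived}.
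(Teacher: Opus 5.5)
Your reduction \(j(\alpha)=\mathrm{rank}_{j(\le^*)}(x)\), for \(x\) of \(\le^*\)-rank \(\alpha\), is correct. But it describes \(j\restriction\gamma\) through reals of \(V\) and of \(D\), and \(M\subseteq\HOD\) has only countably many reals. The whole content of the theorem is how \(M\) computes this map without those reals, and that step is never carried out; you yourself call it a ``hope''.

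The mechanism you offer does not work. The embedding \(j\) of \cref{thm:derived} is an embedding of all of \(V\) into \(D\), not an iteration map of a model \(M_0\) capturing \(\le^*\). You never explain why \(j\) and \(\pi\) should agree on \(\gamma\). To locate \(j(\alpha)\) among the ordinals of an image tree \(\pi(T)\), one still needs a real of rank \(\alpha\), which is exactly what \(M\) lacks. Also, iteration maps are not in general elements of their targets, so ``\(\pi\restriction T\in M\)'' is only automatic when all critical points lie above \(\gamma\).

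That case cannot produce \(j\restriction\gamma\). The embedding \(j\) fixes countable ordinals, but \(j(\omega_1^V)=\omega_1^D=\eta\geq\Theta\). So \(\crit(j)=\omega_1^V\), and \(j\restriction\gamma\) is far from the identity once \(\gamma>\omega_1\). Moreover, as the paper notes, \(j\restriction\gamma\in M\) forces \(M\cap P(\gamma)=\HOD\cap P(\gamma)\). Since \(\omega_1^V\) is measurable in \(\HOD\), for all sufficiently large \(\gamma<\Theta\) this contradicts \(M\) agreeing below \(\gamma\) with an \(M_0\) in which \(\gamma\) lies below the least measurable. So your final scenario is inconsistent.

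Separately, your capturing set-up cannot reach every \(\gamma<\Theta\), as the statement requires. You need \(\le^*\) to be \(\OD\) and Suslin co-Suslin, with trees in \(M_0\).
\begin{itemize}
\item In \(L(\mathbb R)\) the Suslin co-Suslin sets are exactly the \(\Delta^2_1\) sets. So no such prewellordering has length \(\geq\delta^2_1\), although \(\delta^2_1<\Theta\).
\item Under \(\AD_\mathbb R\), every \(\OD\) prewellordering has length below \(\Theta_0(\mathbb R)<\Theta\).
\end{itemize}
Neeman-style genericity iterations are applied to countable models, as in the proof of \cref{thm:ua}. The final model of such an iteration is not a definable inner model \(M\subseteq\HOD\) with a limit of Woodin cardinals \(\eta\geq\Theta\), which is what \cref{thm:derived} provides.

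The paper quotes this statement from Woodin without proof. Still, any proof must construct \(M\) so that it genuinely absorbs \(\HOD\cap P(\gamma)\) and the map \(j\restriction\gamma\), while keeping \(D\) a derived model of \(M\) at \(\eta\). Note that \(j\restriction\gamma\) is ordinal definable, since \(j\restriction\mathscr H\) is definable over \(V\). Your plan does not address this.
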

Note that \(j\restriction\gamma\in M\) implies that
\(M\cap P(\gamma)=\HOD\cap P(\gamma)\).

\begin{thm}\label{thm:unique}
    In \(N\), 
    for all \(X\), there is a unique normal fine measure on \(P_{\omega_1}(X)\).
\end{thm}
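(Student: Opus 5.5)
We argue by comparison, modeled on the proof of \cref{thm:ua} and \cref{thm:ua_hod_s}, but with the comparison lifted from countable hulls of \(\HOD\) to the Nairian models built over them. Existence is already noted above: for \(X\in N\), \(\mu_X\cap N\in N\) is a normal fine measure on \(P_{\omega_1}(X)\) in \(N\). (For \(X\in N\) we have \(|X|\leq^*|I_\beta|\) for some \(\beta<\xi\), and under \(\AD_\mathbb R\) Woodin's measure \(\mu_X\) exists.) For uniqueness, suppose \(\mathcal U_0,\mathcal U_1\in N\) are normal fine measures on \(P_{\omega_1}(X)\). By normality it suffices to treat \(X\) an ordinal \(\gamma<\xi\), or a set of the form \(I_\beta\), since normal fine measures push forward and pull back along surjections. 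So we aim to show \(\mathcal U_0=\mathcal U_1\).

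The plan has four steps.

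\textbf{Step 1.} By \cref{prp:hod-realization}, relativized to the parameters \(\mathcal U_0,\mathcal U_1\), and choosing \(s\in I_\xi\) with both measures \(\OD_s\), take a countable \(\bar V\equiv V\) in \(\HOD\). Let \(\bar N=N^{\bar V}\), with \(\bar{\mathcal U}_\ell\in\bar N\) the preimages, and suppose toward a contradiction that some \(\bar A\) lies in \(\bar{\mathcal U}_0\setminus \bar{\mathcal U}_1\).

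\textbf{Step 2.} Use \cref{thm:derived2} with \(\gamma\) above the relevant ordinals to get \(j:\bar V\to D\) with \(j\restriction\gamma\in \bar M\). Then take a Neeman genericity iteration \(\pi:\bar M\to M^*\) making \(\mathbb R^*=\mathbb R\cap\HOD\) generic, with derived model \(D^*\). The new point relative to \cref{thm:ua} is that the composite map \(k=\pi^+\circ j\) must be extended from \(\mathscr H^{\bar V}\) to all of \(\bar N\). This uses that \(\bar N=\HOD^{\bar V}_{I_{\bar\xi}}\), and that each \(\bar N\)-set is definable from ordinals and some \(t\in I_{\bar\xi}\), whose image \(\pi^+(j(t))\) is pointwise computed via \(\pi^+[j(t)]\). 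This yields an elementary \(k:\bar N\to N^{D^*}_{k(\bar\xi)}\).

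\textbf{Step 3.} Realize the ultrapowers \(\Ult(\bar N,\bar{\mathcal U}_\ell)\) by Spector's construction (\cref{thm:spector}), obtaining \(e_\ell:\bar N\to \bar N_\ell\). As in \cref{thm:ua_hod_s}, compare the resulting targets, obtaining \(k_\ell:\bar N_\ell\to N^*\) into the same model with \(k_0\circ e_0=k_1\circ e_1\). The agreement of targets is argued exactly as in \cref{thm:ua_hod_s}, by pointwise definability and equality of the derived models \(D^*\). Normality and fineness give \(\id_{\bar{\mathcal U}_\ell}=e_\ell[X]\). Hence \(k_\ell(\id_{\bar{\mathcal U}_\ell})=k_\ell\circ e_\ell[X]\), and this is the same set \(\sigma\) for \(\ell=0,1\). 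Then \(\bar A\in\bar{\mathcal U}_\ell\) if and only if \(\sigma\in (k_\ell\circ e_\ell)(\bar A)\), and the right-hand side is independent of \(\ell\). This contradicts the choice of \(\bar A\).

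\textbf{Step 4.} Conclude \(\bar{\mathcal U}_0=\bar{\mathcal U}_1\), and by elementarity \(\mathcal U_0=\mathcal U_1\).

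The main obstacle is Step 2 together with the Spector step: lifting the iteration maps from the \(\HOD\)-hulls to the Nairian models. In particular we must check that \(\pi^+\) acts elementarily on \(\HOD_{I_{\bar\xi}}\)-definitions over the full derived model, which is where \(j\restriction\gamma\in\bar M\) is essential, so that the relevant countable sequences are captured generically. We must also check that the Spector ultrapowers of the choiceless \(\bar N\) are still close enough for the comparison argument. To handle the first, I would verify that every \(t\in I_{\bar\xi}\) is \(<\bar\eta\)-generic over \(\bar M\) with genericity point below the critical point of \(\pi\). That makes \(\pi^+\) unique by Lévy--Solovay, and hence canonical on each \(\HOD_t\).
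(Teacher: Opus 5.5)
Your overall route is the paper's. You lift the genericity-iteration embedding from the countable hull of \(\HOD\) to its Nairian model, sending the element defined from \(s\in\bar I\) to the element defined from \(k[s]\). You then compare Spector ultrapowers and read the measure off from the image of \(X\). The paper gets the uniform genericity you ask for at the end as follows: it uses \(\AD_\mathbb R\) to choose \(\gamma<\Theta\) such that every \(s\in I_\xi\) is \({<}\gamma\)-generic over \(\HOD\), gets \(M\cap P(\bar\gamma)=H\cap P(\bar\gamma)\) from \cref{thm:derived2}, and takes \(\crit(\pi)>\bar\gamma\). You compare two ultrapowers with each other, whereas the paper compares one ultrapower \(j\) with the base lift \(\hat k\); that difference is immaterial.

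However, your Step 1 undermines Step 3. To give \(\mathcal U_0,\mathcal U_1\) preimages, you build \(\bar V\) from a hull relativized to \(s\) and the ordinals defining the measures. Then \(\mathscr H^{\bar V}\) is neither pointwise definable nor finitely generated. Nothing then forces \((k_0\circ e_0)\restriction\Ord\) and \((k_1\circ e_1)\restriction\Ord\) to agree, and agreement on \(\bar s\) would itself presuppose agreement on ordinals. This is exactly why, in \cref{thm:ua_hod_s}, \(s\) is an arbitrary element of \(\bar V\) rather than a generator of the hull. Without this agreement you cannot conclude \(k_0\circ e_0[X]=k_1\circ e_1[X]\).

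The repair is what the paper does. Fix a finitely generated \(H\) with \(\rho(\bar\xi)=\xi\), and let \(\bar{\mathcal N}=(N^+_{\bar\xi})^{\bar V}\). Show that \emph{every} normal fine measure of \(\bar{\mathcal N}\) on any \(P_{\omega_1}(X)\) is the one derived from \(\hat k\) using \(\hat k[X]\). Then transfer uniqueness to \(N^+\) by elementarity. No particular measures need preimages.

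Second, re-running Step 2 on \(\bar N_\ell\) needs an ingredient you only flag. You need a realization \(\rho_\ell : e_\ell(H)\to\mathscr H\) with \(\rho=\rho_\ell\circ(e_\ell\restriction H)\), which the paper obtains by a standard realizability argument. Such a \(\rho_\ell\) is then automatically partial elementary from \(\bar N_\ell\) to \(N^+\), because the theory of \(N^+\) with ordinal parameters reduces uniformly to that of \(\mathscr H\).

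You also need the full Spector addendum, including \(\bar N_\ell\vDash|e_\ell[X]|\leq\aleph_0\), to get \(k_\ell(e_\ell[X])=k_\ell\circ e_\ell[X]\). Finally, your preliminary reduction to ordinals or sets \(I_\beta\) is not justified, since pulling back a normal fine measure along a surjection is not available in general without choice. It is also never used.
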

For the proof we need the following addendum to Spector's \cref{thm:spector}:
\begin{thm}[Spector]
    In the context of \cref{thm:spector}, if \(U\) is a normal fine measure on \(P_{\omega_1}(X)\), then \(k(\id_U) = j[X]\)
    and \(M\vDash |j[X]| \leq \aleph_0\).
\end{thm}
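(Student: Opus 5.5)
The plan is to read both claims off a Łoś-type representation of \(M\) relative to the seed \(a = k(\id_U)\), and then run the classical argument for normal fine measures.

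\textbf{Step 1: every element of \(M\) has the form \(j(f)(a)\).} Write \(V = L(X')\). Since \(j\) is elementary, \(M = L(j(X'))\). Hence every element of \(M\) is definable in \(M\) from \(j(X')\), finitely many ordinals, and a formula.

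Every ordinal of \(M\) lies in \(\ran(k)\), so it has the form \(k([g]_U) = j(g)(a)\). This identity needs care: it requires that \(k\) commutes with evaluation, i.e.\ \(k([g]_U) = k(j_U(g))(k(\id_U))\), which is where I will cite Spector's construction (\cite[Theorem 3(b), Theorem 4]{Spector1991}) rather than \L o\'s's theorem in \(V\). Combining definitions in \(V\) then shows every \(b\in M\) equals \(j(f)(a)\) for some \(f\in V\) with domain \(P_{\omega_1}(X)\). From the same source I will extract the \L o\'s equivalence
\[M\vDash \varphi(j(f)(a))\iff \{\sigma : \varphi(f(\sigma))\}\in U.\]
I expect this step to be the main obstacle. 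The embedding \(k\) is not assumed elementary, and choice fails, so the equivalence must come from the specific features of Spector's construction and not from \(\Ult(V,U)\) itself.

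\textbf{Step 2: \(k(\id_U) = j[X]\).} For the inclusion \(j[X]\subseteq a\): fix \(x\in X\). By fineness \(\{\sigma : x\in \sigma\}\in U\), so \(j(x)\in a\) by Step 1 (or directly by \(k\circ j_U = j\) and atomic \L o\'s in \(\Ult(V,U)\)).

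For the reverse inclusion, let \(b\in a\) and write \(b = j(f)(a)\). Then \(f(\sigma)\in\sigma\) for \(U\)-almost every \(\sigma\), so \(f\) is regressive on a \(U\)-large set. By normality \(f\) is constant, say with value \(x\in X\), on a \(U\)-large set, and therefore \(b = j(x)\). This gives \(a = j[X]\).

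\textbf{Step 3: \(M\vDash |j[X]|\le\aleph_0\).} Every \(\sigma\in P_{\omega_1}(X)\) satisfies ``\(\sigma\) is countable''. This formula involves no choice of enumerations, so Step 1 applies to it and gives \(M\vDash |a|\le\aleph_0\), i.e.\ \(M\vDash |j[X]|\le \aleph_0\).

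Alternatively, one can use directly that \(\Ult(V,U)\) satisfies ``\([\id]\) injects into \(\omega\)''. This needs a function \(\sigma\mapsto\) (surjection \(\omega\to\sigma\)), which may not exist in \(V\). For that reason I will rely on the \L o\'s equivalence from Step 1 rather than pushing explicit witnesses through \(k\).
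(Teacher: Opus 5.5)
\textbf{There is a genuine gap.} Your Step~1 claim, that every \(b\in M\) has the form \(j(f)(a)\) with \(f\in V\) (where \(a=k(\id_U)\)), is false, not merely hard to extract. The reverse inclusion \(a\subseteq j[X]\) in Step~2 rests on it. The inference fails because an element of \(L(Y)\) is definable from \(Y\), ordinals, \emph{and} finitely many elements of the transitive closure of \(\{Y\}\). The corresponding parameters from \(j(X')\) need not be of the form \(j(f)(a)\).

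To see the failure concretely, note first that the \L o\'s equivalence for ground-model \(f\), which you flagged as the main obstacle, is easy. Let \(A=\{\sigma:\varphi(f(\sigma))\}\). Elementarity of \(j\) gives \(M\vDash\varphi(j(f)(a))\) iff \(a\in j(A)\). Since \(k\) is an \(\in\)-embedding and \(j(A)=k(j_U(A))\), we get \(a\in j(A)\) iff \(\id_U\in j_U(A)\) iff \(A\in U\). Hence if \(j(f)(a)\subseteq\omega\), then \(j(f)(a)=\{n<\omega:\{\sigma:n\in f(\sigma)\}\in U\}\in V\).

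Now work under \(\AD_{\mathbb R}+\DC+V=L(P(\mathbb R))\) with \(X=\mathbb R\) and Solovay's measure, which is a legitimate instance of the statement. Since \(j\) fixes reals, \(j[\mathbb R]=\mathbb R^V\). So the very conclusion \(M\vDash|j[X]|\leq\aleph_0\) gives \(M\) a real coding an enumeration of \(\mathbb R^V\). That real is not in \(V\), hence not of the form \(j(f)(a)\). Step~1 would also make \(k\) onto \(M\), whereas the point of Spector's construction is that \(M\) is genuinely larger than \(\ran(k)\).

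\textbf{What survives.} The inclusion \(j[X]\subseteq a\) is fine, and so is Step~3, which amounts to \(a\in j(P_{\omega_1}(X))=(P_{\omega_1}(j(X)))^M\).

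\textbf{What is missing.} You need a reason why every element of \(a\) lies in \(\ran(k)\). Granted that, write \(b=k([g]_U)\in k(\id_U)\); then \(\{\sigma:g(\sigma)\in\sigma\}\in U\), and normality makes \(g\) constant on a \(U\)-large set, so \(b\in j[X]\).
\begin{itemize}
\item If \(X\subseteq\Ord\), this is immediate from \(\Ord\subseteq\ran(k)\), since \(a\subseteq j(X)\subseteq\Ord\).
\item For non-well-orderable \(X\), such as \(\mathbb R\) or the sets needed in \cref{thm:unique}, the statement of \cref{thm:spector} does not supply it. You have to use how Spector's \(M\) is generated from ground-model functions together with the generic object, and show via normality that the generic adds no new elements to \(a\). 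One possible route is a density argument in which the relevant values are decided by conditions that are regressive functions in \(V\).
\end{itemize}
This is the content of the proof of \cite[Theorem 9]{Spector1988}; the paper gives no argument beyond that citation.
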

See the proof of \cite[Theorem 9]{Spector1988} and the comments in \cite[p.~597]{Spector1991}.

    \begin{proof}[Proof of \cref{thm:unique}]
        Suppose \(H\in \HOD\) is a countable transitive model 
        of \(\ZF\) admitting an elementary embedding \(\rho : H \to \mathscr H\)
        with \(\xi\in \ran(\rho)\), and let \(\bar \xi = \rho^{-1}(\xi)\). Applying
        \(\AD_\mathbb R\), let \(\gamma \in (\xi,\Theta)\) be least such that
        every \(s\in I_\xi\) is \({<}\gamma\)-generic over \(\HOD\),
        and let \(\bar \gamma = \rho^{-1}(\gamma)\).

        We will use the construction from \cref{thm:ua}.
        Let \(\tilde{M}\) denote the model \(M\) from 
        \cref{thm:derived2}, and let
        \(M = \rho^{-1}[\tilde{M}]\).
        Let \(D\) be a derived model of \(M\) at the 
        appropriate limit of Woodin cardinals and let
        \(i : H \to \mathscr H^D\) be the embedding
        corresponding to the one from \cref{thm:derived2}.
        We may assume that \(i\restriction \bar \gamma\in M\cap D\).
        In particular, \(M\cap P(\bar \gamma) = H\cap P(\bar \gamma)\).

        Suppose \(\bar{\mathcal N}\in \HOD\) is an outer model of
        \(H\) such that \(\rho\) is partial elementary from
        \((\bar{\mathcal N},H)\) to \((N^+,\mathscr H)\).
        Let \(\bar I = I_{\bar \xi}\cap \bar{\mathcal N}\).
        By our choice of \(\bar \gamma\), every
        \(s\in \bar I\) is \({<}\bar\gamma\)-generic over
        \(H\), and since \(M\cap P(\bar \gamma) = H\cap P(\bar \gamma)\),
        it follows that the same is true for \(M\).

        Let \(\mathbb R^* = \mathbb R\cap \tilde{M}\), and
        as in \cref{thm:ua},
        let \(\pi : M \to M^*\) be a Neeman-style \(\mathbb R^*\)-genericity
        iteration with critical point above \(\bar \gamma\). 
        Let \(D^*\) be the derived model of \(M^*\) computed in
        \(M^*(\mathbb R^*)\), and recall as in 
        \cref{thm:ua} that \(P(\mathbb R^*)\cap D^* \unlhd \Hom^{\tilde{M}}\). 
        
        Note that \(k = \pi \circ i\) is an elementary embedding
        from \(H\) to \(H^* = \mathscr H^{D^*}\).
        Moreover, since
        \(i\restriction \bar \xi\in M\cap D\) and \(\crit(\pi) > \bar \xi\),
        we have \(k\restriction \bar\xi = 
        \pi(i\restriction \bar\xi) \in D^*\).
        We claim that \(k\) lifts to an elementary embedding
        \[\hat{k} : (\bar{\mathcal N},H)\to (\mathcal N^*,H^*)\]
        where \(\mathcal N^* = (N_{k(\bar \xi)}^+)^{D^*}\).
        Namely, if \(a\in \bar{\mathcal N}\) is definable in 
        \((\bar{\mathcal N},H)\) from \(s\in \bar I\), set \(\hat{k}(a)\) 
        equal to the element of \(\mathcal N^*\) defined 
        in \((\mathcal N^*,H^*)\) from \(k[s]\) in the same way.
        Note here that \(k[s]\in D^*\) since \(k\restriction \bar\xi\in D^*\).
        The fact that \(\hat{k}\) is well-defined and elementary follows from
        the fact that \(\pi\circ i\) extends, for each \(s\in \bar I\),
        to an elementary embedding from \(H_s\) to \(\mathscr H^{D^*}_{k[s]}\),
        using \cref{thm:derived2} and the fact that \(\crit(\pi) > \bar\gamma\).

        Since it will be relevant in \cref{thm:ua_N},
        we note that the embedding \(\hat k\) is close to \(\bar{\mathcal N}\). Fix ordinals \(\delta\) and \(\alpha < k(\delta)\), and let
        \(U\) be the \(\bar{\mathcal N}\)-measure on \(\delta\) derived from \(\hat{k}\) using \(\alpha\).
        Let 
        \(\tilde{U}\) be the \(M\)-ultrafilter on \(i(\delta)\) derived from \(\pi\) using \(\alpha\). 
        Since \(\pi\) is close to \(M\), \(\tilde{U}\in M\). Moreover, since \(\crit(\pi) > \bar\gamma\), \(\kappa_{\tilde{U}} > \bar\gamma\),
        and therefore for each \(s\in \bar I\), Levy--Solovay implies
        \(\tilde{U}\) generates a measure \(\tilde{U}_s\) 
        in \(M[s] = M[i[s]]\).
        Finally, \[U = \{A\in P(\delta)\cap \bar{\mathcal N} : \exists s\in \bar I\, (A\in H[s]\wedge i_s(A)\in \tilde{U}_s)\}\]
        and this is definable over 
        \((\bar{\mathcal N},H)\) from the parameter 
        \(\tilde{U}\in M\subseteq H\).

        Now fix a finitely generated structure \(H\)
        with an elementary \(\rho : H \to \mathscr H\)
        such that \(\rho(\bar \xi) = \xi\) and \(\rho(\bar \Theta) = \Theta\).
        Construct \(\bar {V}\in \HOD\) by a type omission argument
        so that \(H = \mathscr H^{\bar {V}}\).
        Let \(\bar{\mathcal N} = (N_{\bar \xi}^+)^{\bar V}\).

        Suppose that \(X,\mathcal U\in \bar{\mathcal N}\) and \(\bar{\mathcal N}\)
        satisfies that \(\mathcal U\) is
        a normal measure on \(P_{\omega_1}(X)\). 
        We will show that \(\mathcal U\) is the measure derived from
        the embedding \(\hat{k} : \bar{\mathcal N}\to \mathcal N^*\)
        defined above using \(\hat{k}[X]\).

        Using Spector's ultrapower construction, form in \(\HOD\)
        an elementary \(j : \bar{\mathcal N}\to \mathcal P\)
        such that \(\mathcal U\) is the measure
        derived from \(j\) using \(j[X]\) and \(j\restriction (P(\Ord)\cap \bar{\mathcal N})\)
        agrees with the ultrapower embedding of \(\mathcal U\).
        Let \(H_{\mathcal P} = j(H)\). 
        By a standard realizability argument, there is an elementary embedding
        \(\rho_{\mathcal P} : H_{\mathcal P}\to \mathscr H\)
        such that \(\rho = \rho_{\mathcal P}\circ j\restriction H\).
        This embedding is automatically partial elementary from \(\mathcal P\) to \(N^+\) since the first-order theory of \(N^+\) with ordinal parameters uniformly reduces to that of \(\mathscr H\).

        Applying the construction above to \(\mathcal P\) instead of \(\bar{\mathcal N}\),
        we obtain \(\hat{k}_{\mathcal P} : \mathcal P \to \mathcal N^*_{\mathcal P}\).
        Using that \(H\) is finitely generated, one 
        can show as in \cref{thm:ua} that the relevant derived models are equal.
        Moreover,
        \(\hat{k}\restriction H = \hat{k}_{\mathcal P}\circ j\restriction H\) since \(H\) is finitely generated
        and these are close embeddings. 
        We can now conclude that \(\mathcal N^*_{\mathcal P} = \mathcal N^*\). 
        Moreover, it follows that
        \(\hat{k}\) and \(\hat{k}_{\mathcal P}\circ j\) agree on \(\bar I\), and hence on \(\bar{\mathcal N}\).
        In particular, \(\hat{k}_{\mathcal P}(j[X]) = \hat{k}_{\mathcal P}\circ j[X] = \hat{k}[X]\).
        Since \(\mathcal U\) is derived from \(j\) using \(j[X]\), we also have that
        \(\mathcal U\) is derived from \(\hat{k}_{\mathcal P}\circ j\) using \(\hat{k}_{\mathcal P}(j[X])\). 
        In other words, \(\mathcal U\) is the measure derived from \(\hat{k}\) using \(\hat{k}[X]\).
    \end{proof}
\cref{thm:unique} confirms a consequence of Sargsyan's 
quasi-club conjecture.
The \textit{quasi-club filter on \(P_{\omega_1}(X)\)} is
the filter generated by sets
of the form  \[\{\sigma \cap X : \sigma \preceq \mathcal M\text{ and }|\sigma| \leq \aleph_0\}\] 
where \(\mathcal M\) is a structure in a countable language whose
underlying set contains \(X\). 
The \textit{quasi-club conjecture} states that assuming 
\(\AD_{\mathbb R} + \Theta\text{ is regular} + \text{HPC} + V = L(P(\mathbb R))\), 
if \(N\) is a minimal Nairian 
model and \(X\in N\), then
the quasi-club filter on \(P_{\omega_1}(X)\) is an ultrafilter.
This conjecture probably cannot be proved as easily as \cref{thm:unique}.
For one thing, the conclusion of the quasi-club conjecture is false in the model of 
\cref{prp:not_vacuous}.

Next, we show that the Ultrapower Axiom holds in Nairian models.
See \cite{UAFromDet} for the statement of the Ultrapower Axiom in the context of ZF + DC;
note that it only applies to measures on ordinals.
\begin{thm}\label{thm:ua_N}
    The Ultrapower Axiom holds in \(N^+\).
    \begin{proof}
        Fix a finitely generated structure \(H\)
        with an elementary \(\rho : H \to \mathscr H\)
        such that \(\rho(\bar \xi) = \xi\) and \(\rho(\bar \Theta) = \Theta\).
        Construct \(\bar {V}\in \HOD\) by \cref{prp:hod-realization}
        so that \(H = \mathscr H^{\bar {V}}\).
        We will prove that \(\UA\) holds in \(\bar{\mathcal N} = (N_{\bar \xi}^+)^{\bar V}\).

        Suppose \(j_0 :\bar{\mathcal N}\to \mathcal P_0\) and 
        \(j_1 : \bar{\mathcal N}\to \mathcal P_1\) are Spector ultrapowers
        of \(\bar{\mathcal N}\) by \(\bar{\mathcal N}\)-internal measures 
        \(U_0\) and \(U_1\) on ordinals.
        The construction from \cref{thm:unique} produces close embeddings
        \(k_0 : \mathcal P_0\to M\) and \(k_1 : \mathcal P_1\to M\)
        such that \(k_0\circ j_{0} = k_1\circ j_{1}\).
        The arguments from \cite{UAFromDet} show this implies that \(\UA\)
        holds for \(U_0\) and \(U_1\).
    \end{proof}
\end{thm}

\subsection{UA and the filter extension property}
\begin{defn} 
    We say \(\delta\) is an \textit{NT-cardinal} if the following hold for some infinite \(\alpha\):
    \begin{itemize}
        \item  \(\delta = \Theta(V_\alpha) = \aleph(V_{\alpha+1})\).
        \item \(V_{\alpha+1}\) is the union of an increasing sequence  \((\Gamma_\beta)_{\beta < \delta}\) such that \(|\Gamma_\beta|\leq^* |V_\alpha|\).
        \item If \(M\) is transitive and \(M^{V_\alpha}\subseteq M\), there is \(X\preceq M\) with \(X^{V_\alpha}\subseteq X\) and \(|X|\leq^* |V_{\alpha+1}|\).
    \end{itemize}
\end{defn}
Blue--Larson--Sargsyan \cite{FailureSquare} show that minimal Nairian models satisfy the theory \(\textnormal{NT}_\text{base}\). If \(N\vDash\textnormal{NT}_\text{base}\), then for all ordinals \(\alpha\), if \(\cf(\alpha)\neq \omega\), \(\Theta(V_{\alpha+1})\) 
is an NT-cardinal.
\begin{thm}\label{thm:NT-filter-extension}
    Let \(\mathcal H = \HOD\cap V_\xi\) and work in \(N\). Suppose \(\lambda\) is an NT-cardinal or a limit of NT-cardinals such that \(\Theta(P(\rho)) \leq \lambda\) whenever \(\rho^+ < \lambda\).
    Then one of the following holds: 
    \begin{enumerate}[(1)]
        \item \label{item:filter_extension} Every \(\lambda\)-complete filter on an ordinal
    extends to a \(\lambda\)-complete ultrafilter.
        \item \(\mathcal H\) correctly computes all finite successors of \(\lambda^{++}\).
    \end{enumerate}
\end{thm}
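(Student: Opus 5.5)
We argue by contradiction, along the lines of \cref{thm:main}. Assume (1) fails: fix a \(\lambda\)-complete filter \(F\) on an ordinal \(\gamma\) that has no \(\lambda\)-complete ultrafilter extension in \(N\), and aim to show that \(\mathcal H\) correctly computes all finite successors of \(\lambda^{++}\). The first task is to move \(F\) into some \(\HOD_s\). Using the NT structure, write \(V_{\alpha+1}=\bigcup_{\beta<\lambda}\Gamma_\beta\) with \(|\Gamma_\beta|\leq^*|V_\alpha|\), or, when \(\lambda\) is a limit of NT-cardinals, use the corresponding decompositions below \(\lambda\). Together with \cref{lma:V_vs_I}, this should show that \(F\) is generated by at most \(\aleph(V_{\alpha+1})=\lambda\) many ``pieces'', each of which is \(\OD_s\) for some \(s\in I_\xi\). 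The reflection clause (hulls \(X\preceq M\) closed under \(V_\alpha\)-sequences with \(|X|\leq^*|V_{\alpha+1}|\)) then lets us take a single \(s\) such that \(F\cap\HOD_s\) is a \(\lambda\)-complete filter in \(\HOD_s\) and any \(\lambda\)-complete ultrafilter of \(\HOD_s\) extending \(F\cap \HOD_s\) would generate an extension of \(F\) in \(N\). The generation step is exactly where the argument of \cref{prp:wd_generation} is used: we show that \(N\)-measures are unions of their traces on the models \(\HOD_t\) for \(t\in I_\xi\), and that \(\lambda\)-completeness survives because \(\Theta(P(\rho))\leq\lambda\) for \(\rho^+<\lambda\).

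Next, use UA in \(\HOD_s\) (\cref{thm:ua_hod_s}) together with the \(\HOD_s\)-ultrafilter hypothesis (\cref{thm:hod_uf}). Under ZFC+UA, a regular \(\delta\) all of whose filters extend is controlled by the Ketonen-least measures \(\mathscr K_\delta^s\). If \(\lambda\)-complete filter extension fails in \(\HOD_s\) at the relevant regular \(\delta\geq\lambda\), then by \cref{lma:ketonen}, \cref{prp:gamma_kappa_cov} and \cref{thm:monotonicity}, some \(\kappa<\lambda\) is \(\delta\)-supercompact in \(\HOD_s\) and \(\kappa=\kappa_\delta\). If instead it holds in every \(\HOD_t\), we lift the extensions through \cref{prp:wd_generation}-style unions and obtain (1). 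So in the failing case we have \(\kappa_\lambda<\lambda\), or \(\kappa_{\lambda^+}<\lambda\).

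Now repeat the cover-and-approximation iteration from \cref{thm:main}. By \cref{prp:kappa_cov}, every subset of \(\lambda\) lies in \(\HOD_{s,x}\) for some \(x\subseteq\kappa\). The NT hypothesis replaces hypotheses (1) and (2) of \cref{thm:main}: since \(\Theta(P(\rho))\leq\lambda\), each \(\HOD_{s,x}\) is a \(\lambda\)-cc extension of \(\HOD_s\), which gives \((\lambda^+)^{\HOD_s}=(\lambda^+)^N\). We then iterate through the finite successors, bound \(\Theta^N(P(\lambda))\leq(\lambda^{++})^N\) by the counting argument, and finish with the Hamkins uniqueness and \(L(Y)\) argument to put \(\Pbd(\lambda^{+\omega})\cap\HOD_s\) inside a \((\lambda^{++})^N\)-cc extension of \(\HOD\). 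That gives (2).

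The main obstacle is the first step: reducing a filter of \(N\) to a filter in a single \(\HOD_s\) so that its ultrafilter extensions pull back, and verifying the analogue of condition (1) of \cref{thm:main} from the NT axioms alone. If a single \(s\) cannot be found, the fallback is to first try working with the Ketonen-minimal extension of \(F\) in the determinacy model, as in \cref{lma:ad_weakly_normal}, and then intersect it with \(N\).
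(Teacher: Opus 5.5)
\textbf{The gap.} The argument breaks at your reduction to a single \(\HOD_s\) and at the dichotomy you then apply there; neither step can work.

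First, an ultrafilter of \(\HOD_s\) on \(\eta\) measures only \(P(\eta)\cap\HOD_s\). To produce an ultrafilter of \(N\) you would need ultrafilters \(E_t\in\HOD_t\), for all \(t\in I_\xi\), that cohere: \(E_t\cap\HOD_s=E_s\). In \cref{prp:wd_generation} this coherence comes for free only because the measure already exists in \(N\) (imported from the determinacy model) and its traces are pinned down as \(\mathscr K^s_\delta\) by \cref{lma:min_weakly_normal}. For an arbitrary \(\lambda\)-complete filter \(F\) there is no such canonical characterization.

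More seriously, your dichotomy runs the wrong way. Since \(\HOD_s\models\ZFC\), the \(\lambda\)-complete filter extension property there is just strong compactness of \(\lambda\) in \(\HOD_s\). Its failure is the expected situation and says nothing about supercompactness below \(\lambda\). Under HPC, \(\HOD\) has no \(\kappa^+\)-supercompact cardinals (\cref{prp:hpc_no_sc}), and \(\lambda\) is not strongly compact in \(\HOD\) either, since by \(\UA\) that would produce a supercompact. So in that setting your implication ``extension fails in \(\HOD_s\) \(\Rightarrow\) some \(\kappa<\lambda\) is \(\delta\)-supercompact in \(\HOD_s\)'' is already false for \(s=\emptyset\). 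Meanwhile your other branch, extension holding in every \(\HOD_t\), never occurs. The filter extension property in \(N\) is a choiceless phenomenon, like Kunen's theorem at \(\omega_1\) under \(\AD\), not something inherited from the \(\HOD_s\). Even knowing \(\kappa_\lambda=\lambda\) would only control one canonical measure on \(\lambda\), not arbitrary filters on arbitrary ordinals. (Also, \cref{lma:V_vs_I} is proved only for minimal Nairian models under HPC, not in the generality of this theorem.)

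\textbf{The missing idea.} You need to argue inside \(N\), using \(\UA\) in \(N\) itself (\cref{thm:ua_N}) rather than in the \(\HOD_s\). Assuming (2) fails, the paper takes the Ketonen-least measure \(U\) of \(N\) extending \(F\). Your fallback names the right object, but proving it \(\lambda\)-complete is the entire content. The paper shows three things.

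(a) \(U\) has a finest partition below \(\lambda\), via the Silver-style \cref{lma:NT_spectrum} and \cref{lma:limit_spectrum}. This is the only place the failure of (2) is used, through \cref{lma:hod_transfer}; that lemma is essentially what your third paragraph reproves, so your endgame survives but only as this ingredient.

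(b) \(s_U(D)\in M_U\) for every measure \(D\) on a cardinal \(\gamma<\lambda\), by \cref{lma:inacc_to_internal}. To apply that lemma, an NT-hull argument finds a single \(S\in F\setminus\bigcup_{\xi<\gamma}U_\xi\) for any sequence \(\langle U_\xi:\xi<\gamma\rangle\) of measures with \(U_\xi\ke U\). This uses that \(F\) is closed under \(V_\alpha\)-indexed intersections.

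(c) Suppose the projection \(D\) of \(U\) to a finest partition of least size \(\gamma\) were nonprincipal. Then \(s_U(D)=[f]_U\) with \(|\ran(f)|<\lambda\), using \(\Theta(P(\rho))\leq\lambda\) when \(\lambda\) is an NT-cardinal. So \(s_U(D)\) lies in the range of the factor map \(M_D\to M_U\), whence \(s_D(D)\in M_D\). That is a strict Lipschitz reduction of \(D\) to itself, which is impossible.

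Hence \(D\) is principal and \(U\) is \(\lambda\)-complete.
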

For each \(x\), let \(\mathcal H_x = \HOD_x\cap V_\xi\) and \(\mathcal{O}_x = \OD_x\cap N\). 

\textit{For the rest of the section we work in \(N\) and fix
\(\lambda\) as in the theorem statement.}

Thus we have access to \(\ZF + \DC + \UA\), and 
by \cref{thm:ua_hod_s}, for all \(s\in \Ord^\omega\),
\(\mathcal H_s\vDash \UA\).

The proof of \cref{thm:NT-filter-extension} 
will also show that the failure of \ref{item:filter_extension} yields an \(s\in \Ord^\omega\) such that there is a supercompact cardinal in \(\mathcal H_s\cap V_\lambda\).
Moreover by the following lemma, \(\lambda\) is either inaccessible or a limit of inaccessibles in \(\mathcal H_s\):
\begin{lma}\label{lma:nt_inacc}
    If \(\delta\) is an NT-cardinal, then \(\delta\) is regular
    and \(\delta\) is inaccessible in any inner model of \(\ZFC\).
    \begin{proof}
        Fix \(\alpha\) such that \(\delta = \Theta(V_\alpha)\).
        By \(\DC\), \(\delta\) has uncountable cofinality.
        To see \(\delta\) is regular, note that combining the final two clauses of the definition of an NT-cardinal, one obtains that 
        for all sufficiently large ordinals \(\theta\), for any \(\gamma < \delta\) and \(a\in V_\theta\), there is some \(X\preceq V_{\theta+1}\) with \(\gamma\cup\{a\}\subseteq X\) and \(|X| \leq^* |V_\alpha|\). An argument similar to \cref{lma:limit-successor-regular} from \cref{thm:theta_meas} then implies \(\delta\) is regular.

        Now to see \(\delta\) is inaccessible in any inner model \(M\) of ZFC, we just need to show \(\delta\) is a strong limit cardinal in \(M\). For any \(\rho < \delta\), \(|\rho|\leq^* |V_\alpha|\), and so \(|P(\rho)|\leq |V_{\alpha+1}|\), which implies \(|P(\rho)\cap M| < \aleph(V_{\alpha+1}) = \delta\).
        Thus \((2^\rho)^M < |P(\rho)\cap M|^+\leq \delta\). 
    \end{proof}
\end{lma}
The following is a version of \cref{thm:main} for NT-cardinals.
Note that the proof does not use \(\Theta(V_\alpha) = \aleph(V_{\alpha+1})\), and hence it gives an alternate (though quite similar)
proof of \cref{thm:theta_meas}.
\begin{lma}\label{lma:hod_transfer}
    Suppose \(\delta\) is a regular cardinal
    and \(\kappa_\delta < \delta\).
    \begin{enumerate}[(1)]
        \item\label{item:strong_limit} If there is a strong limit cardinal \(\gamma\) such that \(\kappa_\delta < \gamma \leq \delta\leq \gamma^+\), then \(\mathcal H\) correctly computes all finite successors of \(\gamma\).
        \item\label{item:NT} If \(\delta\) is an NT-cardinal, then \(\mathcal H\) correctly computes all finite successors of \(\delta^{++}\).

    \end{enumerate}
    \begin{proof}
        Let \(\kappa = \kappa_\delta\). Applying \cref{cor:ua_completeness}, fix \(s\) such that \(\kappa = (\kappa_{\mathscr K_\delta})^{\mathcal H_s}\). By 
        \cref{prp:kappa_cov}, every \(S\subseteq \delta\)
        belongs to \(\mathcal H_{s,x}\) for some \(x\subseteq \kappa\).

        First assume \ref{item:strong_limit}. Then for all \(x\subseteq \kappa\), \(\mathcal H_{s,x}\) is a \(\gamma\)-cc generic extension of \(\mathcal H_s\).
        Thus as in \cref{thm:main},
        \(\mathcal H_s\) correctly computes all finite successors of \(\gamma\).

        By the definition of a strong limit cardinal, we have that \(\Theta(P(\kappa^+)) \leq \gamma\). Applying the Hamkins uniqueness theorem as in \cref{thm:main},
        every bounded subset of \(\gamma^{+\omega}\) belongs to
        a \(\gamma\)-cc extension of \(\mathcal H\), and
        therefore \(\mathcal H\) correctly computes all finite successors of \(\gamma\).

        Instead assume \ref{item:NT}, and fix \(\alpha\) such that
        \(\delta = \Theta(V_\alpha)\).
        We claim that in this case, we can pick \(s\) such that
        \(\mathcal H_{s,x}\) is a 
        \(\delta\)-cc generic extension of \(\mathcal H_s\) for each \(x\subseteq \kappa\).
        To achieve this, we must choose \(s\) so that in addition
        \(P(\kappa) = \bigcup_{\beta< \delta}S_\beta\)
        with \(|S_\beta|\leq^* |V_\alpha|\).
        Using that each \(S_\beta\) belongs to \(\mathcal O_s\), Bukovsky's criterion \cite{Bukovsky,FriedmanFuchinoSakai} 
        implies \(\mathcal H_{s,x}\) is a \(\delta\)-cc
        generic extension of \(\mathcal H_s\).

        Now as in \cref{thm:main}, we obtain that
        \(\mathcal H_s\) correctly computes all finite successors of \(\delta\) and \(\kappa\) is \({<}\delta^{+\omega}\)-supercompact in \(\mathcal H_t\) for all \(t\in \Ord^\omega\) such that \(\mathcal H_s\subseteq \mathcal H_t\). 

        We claim that \(\Theta(P(\delta)) \leq \delta^{++}\).
        Fix a surjection \(f : P(\delta)\to \eta\), and let us show that \(\eta < \delta^{++}\).
        First note that
        \(\Theta(P(\kappa))\leq \delta^+\)
        since \(P(\kappa) = \bigcup_{\beta< \delta}S_\beta\)
        where \(|S_\beta|\leq^* |V_\alpha|\).

        Moreover \(P(\delta) = \bigcup_{a\in P(\kappa)} Y_a\) 
        where \(Y_a = P(\delta)\cap \mathcal H_{s,a}\).
        Note that \(\mathcal H_{s,a}\vDash 2^\delta = \delta^+\): fix \(t\in \Ord^\omega\) such that
        \(\mathcal H_{s,a}\subseteq \mathcal H_t\); since
        \(\mathcal H_t\) satisfies \(\UA\) plus
        \(\kappa\) is \({<}\delta^{+\omega}\)-supercompact,
        we have \(2^\delta = \delta^+\) in \(\mathcal H_t\) by \cite{UAGCH}; since \(\mathcal H_{s,a}\) is cardinal correct in the interval \([\delta,\delta^{+\omega})\), we obtain \(2^\delta = \delta^+\) in \(\mathcal H_{s,a}\).
        Thus using the canonical well-order of \(\mathcal H_{s,a}\), we can fix bijections \(g_a : \delta^+\to Y_a\).
        Now if \(f : P(\delta)\to \eta\),
        then 
        \[\eta = \bigcup_{a\in P(\kappa)} f[Y_a] = \bigcup_{a\in P(\kappa)} (f\circ g_a)[\delta^+]\]
        It follows that \(|\eta|\leq^* |P(\kappa)\times \delta^+|\), and using \(\Theta(P(\kappa)) \leq \delta^+\), we obtain that \(\eta < \delta^{++}\).

        Applying the Hamkins uniqueness theorem as in \cref{thm:main}, it follows that
        every bounded subset of \(\delta^{+\omega}\) belongs to
        a \(\delta^{++}\)-cc extension of \(\mathcal H\), and
        therefore \(\mathcal H\) correctly computes all finite successors of \(\delta^{++}\).
    \end{proof}
\end{lma}

\begin{defn}
If \(U\) is an ultrafilter on \(X\) and \(P\) and \(Q\) are partitions of \(X\),
then \textit{\(P\) refines \(Q\) modulo \(U\)}, denoted \(P\leq_U Q\), 
if there is some \(A\in U\)
such that \(P\restriction A\) refines \(Q\restriction A\).

We say \(U\) is \textit{\(\gamma\)-decomposable} if there is a
partition \(P\) of \(X\) such that \(|P| = \gamma\) and 
whenever \(Q\leq_U P\),
\(|Q| \geq \gamma\). The \textit{decomposability spectrum} \(D(U)\) consists of all \(\gamma\)
such that \(U\) is \(\gamma\)-decomposable.
\end{defn}

\begin{lma}\label{lma:limit_spectrum}
    Suppose \(\gamma\) is a strong limit cardinal of countable cofinality or a limit of such cardinals. If there is a measure \(U\) on an ordinal
    with \(\sup(D(U)\cap \gamma) = \gamma\), then \(\mathcal H\) correctly computes all finite successors of \(\gamma\).
    \begin{proof}
        First assume \(\cf(\gamma) = \omega\). 
        Let \(\eta\) be the underlying ordinal of \(U\) and pick 
        \(\gamma_n\in D(U)\) increasing to \(\gamma\) and functions
        \(f_n : \eta\to \gamma_n\) witnessing \(\gamma_n\in D(U)\).
        Fix \(s\in \Ord^\omega\) such that \((f_n)_{n < \omega}\in \mathcal H_s\).
        Thus in \(\mathcal H_s\), 
        \(\cf(\gamma) = \omega\) and \(\gamma = \sup (D(\bar U)\cap \gamma)\)
        where \(\bar U = U\cap \mathcal H_s\). 
        The proof of \cite[Lemma 5.4]{GoldbergUniqueness} shows that
        in \(\HOD_s\), there is a countably complete uniform ultrafilter
        on \(\delta = (\gamma^+)^{ \mathcal H_s}\). 
        Therefore by \cite[Corollary 7.4.10]{UABook}
        some ordinal \(\kappa < \gamma\) is 
        \(\delta\)-supercompact in \( \mathcal H_s\). 
        The proof of \cite[Lemma 5.4]{GoldbergUniqueness} shows moreover that \(\delta = \gamma^+\): let
        \[X = \{\langle f_n(\xi) : {n < \omega}\rangle : \xi < \eta\}\] and note that \(X\in \mathcal H_s\), \(X\subseteq \gamma^\omega\), and \(|X| \geq \gamma^+\); therefore using Solovay's theorem on the SCH above strongly compact cardinals, \[\delta = (\gamma^+)^{ \mathcal H_s} = (\gamma^\omega)^{\mathcal H_s}\geq \gamma^+\] It follows that \(\delta\) is regular and
        \(\kappa_{\delta} < \delta\). By \cref{lma:hod_transfer},
        \(\mathcal H\) correctly computes all finite
        successors of \(\gamma\).

        We now consider the case where \(\gamma\) has uncountable cofinality. The previous case implies that there is an \(\omega\)-club \(C\) of cardinals \(\bar \gamma < \gamma\)
        such that \((\bar \gamma^+)^{\mathcal H} = \bar\gamma^+\).

        If \(\gamma\) is singular, then a Silver-style argument establishes
        that \((\gamma^{+})^{\mathcal H} = \gamma^+\).
        Namely, let \(\iota = \cf(\gamma)\), and let
        \(j : V \to M\) be the Spector ultrapower via \(W_\iota\).
        Note that \(\sup j[\gamma] = \gamma\) since \(\gamma\) is a strong limit, and hence \(\gamma\in j(C)\).
        Also \((\gamma^{+})^M\geq \gamma^+\): if \(\preceq\) is a well-order of \(\gamma\), then \({\preceq}^* = j({\preceq})\restriction \gamma\) is a well-order of \(\gamma\) that belongs to \(M\) with \(\ot({\preceq}^*)\) at least \(\ot({\preceq})\).
        This implies
        \[(\gamma^+)^{\mathcal H} \geq (\gamma^+)^{j(\mathcal H)} \geq (\gamma^+)^M \geq \gamma^+\]
        Thus \(\gamma^+\) is regular and we can apply \cref{lma:hod_transfer} to \(\delta = \gamma^+\);
        note that \(\kappa_{\gamma^+} < \gamma\) since 
        \(\kappa_{\gamma^+}\) is regular.

        If \(\gamma\) is regular, we can apply \cref{lma:hod_transfer}
        directly to \(\delta = \gamma\) once we show that
        \(\kappa_\gamma < \gamma\).
        Assume towards a contradiction that \(\kappa_{W_\gamma} = \gamma\). 
        Note that for a \(W_\gamma\)-large set of \(\bar \gamma\),
        \(\mathcal H\) correctly computes \(\bar\gamma^+\) and hence \(\mathcal H\) 
        satisfies that some \(\kappa < \bar \gamma\) is
        \(\bar \gamma^+\)-supercompact.
        Since \(W_\gamma\) is normal, 
        there is a fixed \(\kappa < \gamma\) that is \({<}\gamma\)-supercompact in \(\mathcal H\). By \cref{prp:gamma_kappa_cov}, it follows that \(\mathcal H\) has the \((\aleph_1,\kappa)\)-cover property at \(\gamma\),
        and by \cref{lma:ketonen},
        it follows that \((\kappa_{\mathscr K_\gamma})^{\mathcal H}\leq \kappa < \gamma\). By \cref{cor:ua_completeness}, this contradicts that \(\kappa_\gamma = \gamma\). 
    \end{proof}
\end{lma}

\begin{defn}
A \textit{finest partition of \(U\) below \(\lambda\)} is
a \(\leq_U\)-minimum element 
of the set of \({<}\lambda\)-sized partitions of the underlying set of \(U\).
\end{defn}
We prove the following result, based on a theorem of Silver \cite{SilverIndecomposable}.
\begin{lma}\label{lma:NT_spectrum}
    Suppose \(\delta\) is an NT-cardinal, \(U\) is a measure on an ordinal, and one of the following holds: 
    \begin{itemize}
        \item \(\sup (D(U)\cap \theta) < \delta\) where \(\theta = \aleph(P(\delta))\).
        \item \(\mathcal H\) does not correctly compute all finite successors of \(\delta^{++}\).
    \end{itemize}
    Then \(U\) has a finest partition below \(\delta\).
    \begin{proof}
        Fix \(\alpha\) such that \(\delta = \Theta(V_\alpha)\). 
        Fix \(\theta\) sufficiently large and take a \(V_\alpha\)-closed \(X\preceq V_{\theta+1}\) with \(|X| \leq^* |V_{\alpha+1}|\) containing the finitely many relevant parameters. Write 
        \[X = \bigcup_{\beta < \delta} X_\beta\] where \(|X_\beta| \leq^* |V_\alpha|\). Note that each \(X_\beta\in X\) by \(V_\alpha\)-closure. 
        Let \(\eta\) be the underlying ordinal of \(U\) and for \(\beta < \delta\), define a partition \(P_\beta\) of \(\eta\) by putting \(\xi_0,\xi_1 < \eta\) into the same cell if and only if \(D(X_\beta,\xi_0) = D(X_\beta,\xi_1)\) where \[D(X_\beta,\xi) = \{A\in P(\eta)\cap X_\beta: \xi\in A\}\] 
        Then \(P_\beta\in X\) since \(X_\beta\in X\).
        Moreover, we claim that \(|P_\beta| < \delta\). 
        First, note that \(P_\beta\) injects into \(P(X_\beta)\):
        map a cell \(B\in P_\beta\) to the common value
        of \(D(X_\beta,\xi)\) for all \(\xi\in B\).
        Since \(P_\beta\) is well-orderable, 
        \[|P_\beta| < \aleph(P(X_\beta)) \leq \aleph(V_{\alpha+1}) = \delta\]

        To show \(U\) has a finest partition, it suffices to show that this sequence stabilizes modulo \(U\), or in other words that there is some \(\beta < \delta\) such that for all \(\beta'\geq \beta\), \(P_\beta \equiv P_{\beta'}\). 
        Then since \(P_\beta\in X\), \(P_\beta\) is a finest partition of \(U\) below \(\delta\).

        Assume first that \(\sup(D(U)\cap \theta) < \delta\) where \(\theta = \aleph(P(\delta))\). Let \(Q\) be the least common refinement of \((P_\beta)_{\beta < \delta}\). Note that \(|Q|\leq |P(\delta)|\). Since \(Q\) is well-orderable, 
        \(|Q| < \aleph(P(\delta))\). Since \(\sup(D(U)\cap \theta) < \delta\), there is some \(A\in U\) such that \(|Q\restriction A| < \delta\).
        Thus there is then some \(\beta < \delta\) such that for all \(\beta'\geq \beta\), \(P_\beta\restriction A = P_{\beta'} \restriction A\). 
        Since \(P_\beta\in X\), \(P_\beta\) is a finest partition of \(U\) below \(\delta\).

        We finish by proving \(U\) has a finest partition below \(\delta\) in the case that \(\mathcal H\) does not correctly compute all finite successors of \(\delta^{++}\).
        Take \(s\in \Ord^\omega\) such that
        \(\langle P_\beta : \beta < \delta\rangle\in \mathcal H_s\).
        Let \(\bar U = U\cap \mathcal H_s\) and let \(\mu = \sup (D(\bar U)\cap \delta)\). If \(\mu < \delta\), then since \(\delta\) is inaccessible in \(\mathcal H_s\), the sequence \((P_\beta)_{\beta < \delta}\) stabilizes modulo \(U\): there cannot be a strictly Rudin--Keisler increasing \(\delta\)-sequence of measures on \(\mu\).

        So assume towards a contradiction that
        \(\mu = \delta\).
        Then some \(\kappa < \delta\) is \(\delta\)-supercompact 
        in \(\mathcal H_s\). 
        It follows from \cref{cor:ua_completeness} that \(\kappa_\delta < \delta\),
        and so by \cref{lma:hod_transfer}, \(\mathcal H\) correctly
        computes all finite successors of \(\delta^{++}\).
    \end{proof}
\end{lma}

\begin{defn}
    Suppose \(D\) and \(U\) are measures and \(X\) is the underlying set of \(D\).
    Then \[s_U(D) =  \{A\in j_U(P(X)) : j_U^{-1}[A] \in D\}\]
\end{defn}
In the ZFC context, \(s_U(D)\in M_U\) if and only if \(j_D\restriction M_U\) is definable over \(M_U\). Though this does not go through assuming just ZF + DC, a certain amount of the UA-theory of \(s_U(D)\) works out:
\begin{lma}[\(\ZF + \DC + \UA\)]\label{lma:inacc_to_internal}
    Suppose \(D\) and \(U\) are measures on ordinals,
    and \(U\) is not the
    \(D\)-limit of a sequence of measures Ketonen below \(U\).
    Then \(s_U(D)\in M_U\).
\end{lma}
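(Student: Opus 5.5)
The plan is to reduce the statement to a comparison of \(j_U\) with \(j_D\) and then use the hypothesis on \(U\) to force one side of that comparison to be trivial. By \(\UA\) in the \(\ZF+\DC\) form of \cite{UAFromDet}, working with Spector ultrapowers (\cref{thm:spector}) wherever \L o\'s's theorem is needed, there are internal ultrapower embeddings \(k : M_U\to P\) and \(h : M_D\to P\) with \(k\circ j_U = h\circ j_D\). I will take the \emph{canonical} (minimal) such comparison, computed Ketonen-minimally as in \cite[Theorem 3.8]{UAFromDet}. So \(h\) is the ultrapower of \(M_D\) by the Ketonen-least \(M_D\)-measure \(Z\) making a comparison possible, and \(k\) is the ultrapower of \(M_U\) by some \(M_U\)-internal measure \(W\).

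The first step is to show that the hypothesis forces \(h\) to be the identity. Suppose \(Z\) is nonprincipal in \(M_D\). Represent \(Z = [\alpha\mapsto Z_\alpha]_D\), and write \(\alpha\) for the point of \(X\), the underlying ordinal of \(D\), on which the representing function lives. I will then argue that \(U = D\text{-}\lim_\alpha U_\alpha\), where \(U_\alpha\) is the measure derived from \(j_{Z_\alpha}\) using the point representing \(k(\id_U)\). The idea is that \(h\circ j_D\) is the two-step iteration \(D\) followed by \(Z\), so \(\id_U\) is sent to a point of \(P\) that is a \(D\)-limit of points coming from the \(Z_\alpha\). The main task is then to show that \(D\)-almost every \(U_\alpha\) is Ketonen below \(U\). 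This should follow from the minimality of \(Z\): if \(U_\alpha\geq_\Bbbk U\) on a \(D\)-large set, then the witnesses \(U_\alpha\) could be used to build a comparison with a Ketonen-smaller \(Z\), using the characterization of \(\ke\) by limits. That contradicts the hypothesis, so \(Z\) is principal, \(h=\mathrm{id}\), and therefore \(j_D = k\circ j_U\) with \(k = j^{M_U}_W\) definable over \(M_U\).

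The second step is to compute \(s_U(D)\) inside \(M_U\) from \(W\). For \(A\in j_U(P(X))\), unwinding definitions gives
\[A\in s_U(D)\iff \{x\in X : j_U(x)\in A\}\in D\iff \id_D\in j_D(\{x : j_U(x)\in A\}).\]
Using \(j_D = k\circ j_U\), the right-hand side should reduce to \(\id_D\in k(A)\), or to a condition of the form \(k(j_U)(\id_D)\in k(A)\) evaluated inside \(P\). Since \(\id_D\in P\) is \(k([f]_W)\)-representable in \(M_U\), the set \(s_U(D)\) is then the measure of \(M_U\) derived from \(j_W^{M_U}\) using the \(W\)-representative of \(\id_D\). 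This places \(s_U(D)\) in \(M_U\). In the \(\ZF\) setting I will carry this out through the factor map \(\Ult(V,U)\to M\) of \cref{thm:spector}, noting that all the relevant objects are sets of ordinals, so \L o\'s is only needed for formulas about ordinals.

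I expect the main obstacle to be the first step, namely extracting from a nontrivial \(Z\) a sequence \(\langle U_\alpha\rangle\) that is Ketonen below \(U\) and has \(D\)-limit exactly \(U\). In \(\ZFC\) this is the standard Rudin--Frol\'ik/Ketonen analysis, but here ultrapowers are not fully elementary. My fallback is to phrase everything through the Ketonen-order characterization of \(\UA\), which is what \cite[Theorem 3.8]{UAFromDet} proves in \(\ZF+\DC\), and to avoid internal definability of \(j_D\restriction M_U\) altogether.
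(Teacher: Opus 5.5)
\textbf{There is a genuine gap: Step 1 aims at a false conclusion, and Step 2 would not follow from it anyway.}

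\emph{Step 1.} The hypothesis does not make the \(D\)-side \(h\) of the canonical comparison trivial. Take \(D\) principal and \(U\) nonprincipal. The hypothesis holds vacuously, since a \(D\)-limit of a sequence is one of its terms and \(U\ke U\) is impossible. Yet the canonical comparison of \((j_U,\mathrm{id})\) is \((\mathrm{id},j_U)\), so \(Z=U\) is nonprincipal.

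Worse, for any nonprincipal \(U\), the conclusion \(h=\mathrm{id}\) contradicts the hypothesis. If \(h=\mathrm{id}\) and \(k(\id_U)=[f]_D\), then \(U=f_*(D)=D\ulim_x p_{f(x)}\) with each \(p_{f(x)}\) principal. Moreover \(p_{f(x)}\ke U\) for \(D\)-almost every \(x\), because \(U\) concentrates above \(f(x)\). So your argument would show that the hypothesis holds only for principal \(U\).

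The error is in the direction of the Ketonen comparison. Minimality of \(Z\) gives \(U_\alpha\le_{\Bbbk}U\) for \(D\)-almost every component, never \(U_\alpha\ke U\). In fact the hypothesis forces \(U_\alpha=U\) almost everywhere.

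\emph{Step 2.} This fails independently of Step 1. A factorization \(j_D=k\circ j_U\) says nothing about \(j_D\restriction M_U\). For \(A\in M_U\) we have \(j_D(A)=k(j_U(A))\), not \(k(A)\). So \(A\in s_U(D)\) iff \(j_D(j_U)(\id_D)\in k(j_U(A))\), which involves the non-internal map \(j_U\restriction M_U\). For example, if \(D=U\) is normal on \(\kappa\), then \(j_D=\mathrm{id}\circ j_U\), but \(s_U(U)\notin M_U\), since \(s_U(U)\cap P(\kappa)=U\).

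\textbf{What you need instead} is \(h=j_D(j_U)\), equivalently \(k=j_D\restriction M_U\). This comes from a sandwich.

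Let \(Z\) be the \(M_D\)-measure derived from \(h\) using \(k(\id_U)\), and write \(Z=[\langle Z_x\rangle]_D\). Then \(j_D^{-1}[Z]=U\), that is, \(U=D\ulim_x Z_x\).

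The minimality in the canonical comparison should be formulated among lifts: \(Z\) is the \(\le_{\Bbbk}\)-least \(M_D\)-measure with \(j_D^{-1}[Z]=U\). It is not the least measure ``making a comparison possible'', since a priori \(j_D\restriction M_U\) is not internal. Since \(j_D^{-1}[j_D(U)]=U\) as well, this minimality gives \(Z\le_{\Bbbk}j_D(U)\) in \(M_D\).

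The hypothesis gives exactly \(Z\not<_{\Bbbk}j_D(U)\), since otherwise \(Z_x\ke U\) for \(D\)-almost every \(x\). Hence \(Z=j_D(U)\).

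The target of the canonical comparison is generated by \(h[M_D]\cup\{k(\id_U)\}\). So \(Z=j_D(U)\) yields \(h=j_D(j_U)\) and \(k(\id_U)=j_D(\id_U)\). Therefore \(k\circ j_U=j_D(j_U)\circ j_D=j_D\circ j_U\), and so \(k=j_D\restriction M_U\).

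Now \(A\in s_U(D)\) iff \(h(\id_D)\in k(A)\). Thus \(s_U(D)\) is derived from the internal ultrapower \(k\) and lies in \(M_U\). At this point your Step 2 goes through in corrected form.

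This is the \(\ZFC\) argument behind \cite[Lemma 5.5.15]{UABook}, which the paper's sketch invokes. In \(\ZF+\DC\), the identification of \(k\) with \(j_D\restriction M_U\) has to be carried out on ordinals using Spector ultrapowers, as in the proofs the paper cites.
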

\begin{proof}[Sketch]
    This is a choiceless analog of \cite[Lemma 5.5.15]{UABook}.
    The details are contained in the proofs of \cite[Lemma 3.19]{EvenOrdinals} and
    \cite[Theorem 3.8]{UAFromDet}.
\end{proof}

\begin{proof}[Proof of \cref{thm:NT-filter-extension}]
    Assume \(\mathcal H\) does not correctly compute all finite
    successors of \(\lambda^{++}\). 
    Let \(F\) be a \(\lambda\)-complete filter on an ordinal
    \(\eta\). Let \(U\) be the Ketonen least measure on \(\eta\)
    extending \(F\). We claim that \(U\) is \(\lambda\)-complete.
    
    We first claim that \(U\) has a finest partition below \(\lambda\).
    If \(\lambda\) is an NT-cardinal, this is immediate from \cref{lma:NT_spectrum}. If \(\lambda\) is a limit of NT-cardinals,
    we first use \cref{lma:limit_spectrum} to conclude that
    \(\sup(D(U)\cap \lambda) < \lambda\). Fix an NT-cardinal
    \(\delta < \lambda\) above this supremum. Note now that
    \(\sup (D(U)\cap \theta) < \delta\) where \(\theta = \aleph(P(\delta))\).
    Therefore \(U\) has a finest partition below \(\delta\),
    and it follows easily that \(U\) has a finest partition below \(\lambda\).

    Next, we show that if \(D\) is a measure on a cardinal \(\gamma < \lambda\),
    then \(s_U(D) \in M_U\).
    In fact, if \(\vec U = \langle U_\xi : \xi < \gamma\rangle\) 
    is a sequence of measures on \(\eta\) and
    \(U_\xi\ke U\) for all \(\xi < \gamma\), we show that 
    \(U^* = D\ulim_{\xi < \gamma} U_\xi\) is not \(U\), and then cite \cref{thm:ua_N} and \cref{lma:inacc_to_internal}.
    Note that by the minimality of \(U\),
    \(F\setminus U_\xi\neq \emptyset\) for each \(\xi < \gamma\).
    We will find a single \(S\in F\setminus \bigcup_{\xi < \gamma} U_\xi\).

    For this, fix an NT-cardinal \(\delta = \Theta(V_\alpha)\) with
    \(\gamma < \delta \leq \lambda\). Take a sufficiently large ordinal \(\theta\), and find an elementary substructure 
    \(X\preceq V_{\theta+1}\) with \(|X| \leq^* |V_{\alpha+1}|\),
    containing \(V_{\alpha+1}\cup \{F,\vec U\}\). 
    Write \(X\) as a continuous increasing union of sets \((X_\beta)_{\beta < \delta}\) 
    where \(|X_\beta| \leq^* |V_\alpha|\). Fix \(\beta < \delta\) such that \(\gamma\cup \{F,\vec U\}\subseteq X_\beta\) and 
    \(X_\beta \preceq V_{\theta+1}\). Then for each \(\xi < \gamma\),
    \((F\setminus U_\xi)\cap X_\beta \neq \emptyset\).
    Let \(S = \bigcap (F\cap X_\beta)\). Clearly
    \(S\notin U_\xi\), but also \(S\in F\) since \(F\)
    is closed under \(V_\alpha\)-indexed intersections
    by \cite[Lemma 3.5]{Goldberg2024MeasurableChoiceless},
    using the fact that \(\aleph(V_{\alpha+1}) = \delta\)
    and \(F\) is \(\delta\)-complete.

    Thus
    \(s_U(D) \in M_U\) for every \(D\in \bigcup_{\gamma < \lambda}\meas(\gamma)\). 
    Let \(Q\) be a finest partition of \(U\) below \(\lambda\),
    with \(|Q|\) as small as possible. Let \(\gamma = |Q|\), and fix an NT-cardinal \(\delta\leq \lambda\) with \(\gamma < \delta\).

    Let \(D\) be the projection of \(U\) to a measure on \(Q\):
    \[D = \{A\subseteq Q: \bigcup A\in U\}\]
    We view \(D\) as a measure on \(\gamma\);
    then \(s_U(D) \in M_U\). Moreover by the minimality of \(|Q|\),
    \(D\) is uniform.

    To show \(U\) is \(\lambda\)-complete, it suffices to show that \(D\) is principal, so assume towards a contradiction that \(D\) is nonprincipal.
    Then since \(D\) is uniform on \(\gamma\) and \(D\RK U\), 
    \(j_U\) is discontinuous at \(\gamma\).
    Thus \(s_U(D)\) concentrates on
    \(\sup j_U[\gamma] < j_U(\gamma)\).
    
    Let \(s_U(D) = [f]_U\), where \(f : \eta \to \meas(\gamma)\)
    and each \(f(\xi)\) concentrates on some \(\nu < \gamma\).
    We claim that \(|\ran(f)| < \lambda\).
    If \(\lambda\) is a limit of NT-cardinals, this is
    easy since for all \(\gamma < \lambda\),
    fixing \(\alpha\) such that \(\gamma\leq \Theta(V_\alpha) < \lambda\), we have
    \(|\ran(f)| \leq |\meas(\gamma)| \leq \Theta(V_{\alpha+10}) < \lambda\). The more delicate case occurs when \(\lambda\) is an NT-cardinal.
    Then we claim that for each \(\nu < \gamma\), 
    \(|\meas(\nu)| < \lambda\). 
    Note here that
    \(\nu^+ < \lambda\), so by assumption \(\Theta(P(\nu))\leq \lambda\). 
    Fix \(\alpha\) such that \(\lambda = \Theta(V_\alpha)\).
    By the definition of an NT-cardinal,
    we can write \(P(\nu)\) as an increasing union \(\bigcup_{\beta < \lambda} Y_\beta\)
    where \(|Y_\beta|\leq^* |V_\alpha|\). 
    Since \(\Theta(P(\nu)) \leq \lambda\), we must have
    \(P(\nu)\subseteq Y_\beta\) for some \(\beta < \lambda\),
    which means that \(|P(\nu)|\leq^* |V_\alpha|\).
    Then \(|\meas(\nu)|\leq |V_{\alpha+1}|\), and thus
    \(|\meas(\nu)| < \aleph(V_{\alpha+1}) = \lambda\).
    Since \(\lambda\) is regular by \cref{lma:nt_inacc}, 
    it follows that the set of measures on \(\gamma\) that concentrate on an ordinal \(\nu < \gamma\) has cardinality
    less than \(\lambda\). Hence \(|\ran(f)| < \lambda\), as claimed.

    Since \(|\ran(f)| < \lambda\) and \(Q\) is a finest partition of \(U\) below \(\lambda\), letting \(k : M_D\to M_U\) be the factor embedding, we have \(s_U(D) = [f]_U\in \ran(k)\).
    It follows that \[s_D(D) = k^{-1}[s_U(D)]\in M_D\]
    Since \(D\) is nonprincipal, \(s_D(D)\in M_D\) yields a
    strict Lipschitz reduction from \(D\) to itself:
    let \(\langle D_\alpha : \alpha < \gamma\rangle\) represent
    \(s_D(D)\), and note that \(\alpha\in D_\alpha\) for all \(\alpha < \gamma\)
    since \(D\) is uniform and \(s_D(D)\) concentrates on \(\sup j_D[\gamma]\);
    we have \(S\in D\) if and only if \(\{\alpha < \gamma : S\cap \alpha\in D_\alpha\}\in D\).
    This contradicts
    \cite[Corollary 3.4.29]{UABook}.

    Thus we have shown that \(D\) is principal, and hence \(U\) is \(\lambda\)-complete.
\end{proof}
For our final theorem, we assume \(\text{HPC}\) and that \(N\) is a minimal Nairian model. 
\begin{thm}\label{thm:filter_extension}
    In \(N\), suppose \(\alpha\geq \omega\) is not of the form \(\beta+1\) where \(\beta > \omega\) and \(\cf(\beta) = \omega\).
    Let \(\delta = \Theta(V_\alpha)\). Then every
    \(\delta\)-complete filter on an ordinal extends to a
    \(\delta\)-complete ultrafilter.
    \begin{proof}
        If \(\alpha = \omega\), then \(\Theta(V_\alpha) =\omega_1\), and we obtain the filter extension property from Kunen's theorem.
        If \(\alpha\) is a successor ordinal, then 
        \(\text{NT}_\text{base}\) combined with \cref{cor:no-long-rank-sequences} yields that \(\delta\) is an NT-cardinal, so we can apply \cref{thm:NT-filter-extension}. 
        If \(\alpha\) is a limit ordinal,
        then by \cref{lma:limit-successor-regular} of \cref{thm:theta_meas}, \(\Theta(V_\alpha) = \lambda^+\)
        where \(\lambda = \sup_{\beta < \alpha} \Theta(V_{\beta})\).
        Then \(\lambda\) is a limit of NT-cardinals, so by \cref{thm:NT-filter-extension}, every \(\lambda\)-complete filter on an ordinal extends to a \(\lambda\)-complete ultrafilter.
        By minimality, \(\lambda\) is singular, and therefore
        \(\lambda\)-complete filters are \(\lambda^+\)-complete. This yields the conclusion.
    \end{proof}
\end{thm}

\paragraph{AI statement.}
The main results of this paper were proved by the authors in summer 2026 without the use of AI. 
Two notable AI contributions came later.
First, the comparison argument in \cref{thm:unique} was loosely inspired by the following comment of GPT-5.6 Sol, which appeared on a much longer list of less helpful 
research directions: ``One should
compare all the \(\mathrm{HOD}_s\)'s generated by
\(s\in\bigcup_{\beta<\Lambda}\beta^\omega\), together with their
transition maps.'' Second, the authors had originally proved
\cref{cor:no-long-rank-sequences} using the HOD analysis,
but GPT-6 Astra suggested using Jech's
\cref{lma:small-completely-generated-algebra}
to complete the softer proof given here. 
The paper is human-written, but LLMs were consulted during its preparation.
The authors used GPT-5.6 Sol and later GPT-6 Astra to check arguments, find typos, uniformize notation, and compile the bibliography.

\paragraph{Acknowledgements.}

This work was partially supported by the Simons Foundation grant (award no. SFI-MPS-T-Institutes-00010825) and from State Treasury funds as part of a task commissioned by the Minister of Science and Higher Education under the project “Organization of the Simons Semesters at the Banach Center - New Energies in 2026-2028” (agreement no. MNiSW/2025/DAP/491).
\begin{center}
\includegraphics[width=0.9\linewidth]{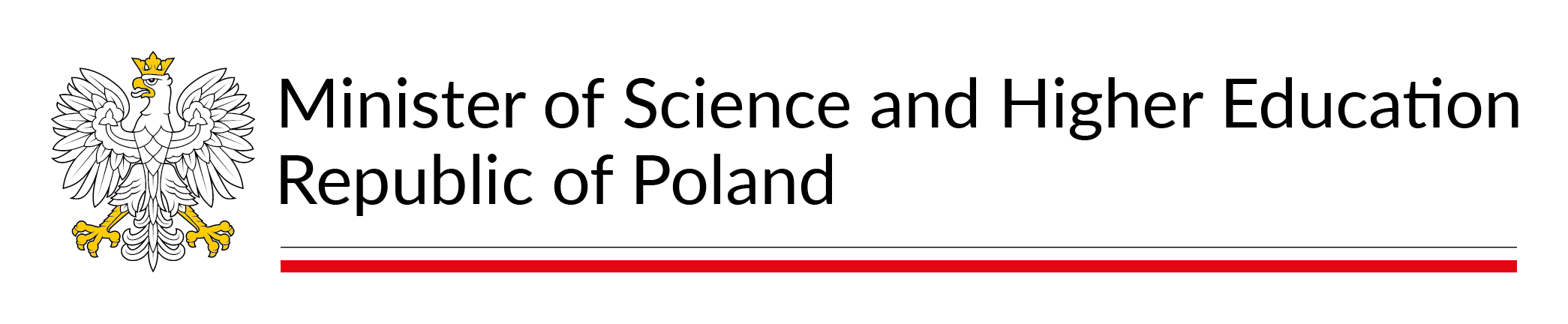}
\par\medskip
\includegraphics[width=0.9\linewidth]{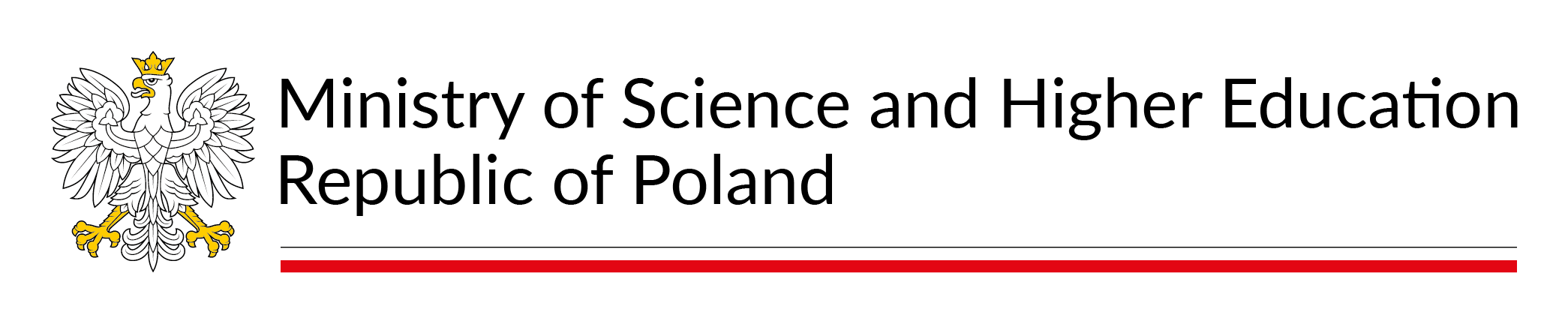}
\end{center}

\end{document}